\documentclass[11pt,a4paper]{article}
\usepackage{fullpage}
\usepackage[english]{babel}
\usepackage[utf8]{inputenc}
\usepackage[T1]{fontenc}
\usepackage{amsmath}
\usepackage{amsthm}
\usepackage{amssymb}
\usepackage{graphicx}
\usepackage{tikz,pgfplots,pgfplotstable}
\usepackage{color}
\usepackage{hyperref}             
\usepackage{versions}
\usepackage{frcursive}

\newtheorem{definition}{Definition}[section]
\newtheorem{theorem}{Theorem}[section]
\newtheorem{proposition}[theorem]{Proposition}
\newtheorem{corollary}[theorem]{Corollary}
\newtheorem{lemma}[theorem]{Lemma}
\newtheorem{remark}{Remark}[section]
\numberwithin{figure}{section}

\definecolor{red}{rgb}{.64,.19,.22}
\definecolor{green}{rgb}{.19,.64,.22}
\definecolor{blue}{rgb}{.2,.2,.7}
\definecolor{gray}{rgb}{.4,.4,.4}

\DeclareMathOperator{\dist}{dist}

\DeclareMathOperator{\diam}{diam}

\DeclareMathOperator{\Lip}{\Lip}

\let\div\relax
\DeclareMathOperator{\div}{div}

\newcommand{\cC}{{\cal C}}

\newcommand{\N}{\mathbb{N}}
\newcommand{\R}{\mathbb{R}}

\newcommand{\Q}{\mathbb{Q}}

\newcommand{\td}{{\text d}}

\newcommand{\tin}{\text{ in }}
\newcommand{\ton}{\text{ on }}

\newcommand{\lra}{\longrightarrow}
\newcommand{\wt}{\widetilde}

\newcommand{\bs}{\backslash}

\newcommand{\ul}{\underline}
\newcommand{\ol}{\overline}

\newcommand{\imp}{\Rightarrow}
\newcommand{\inj}{\hookrightarrow}
\newcommand{\fint}{\mathop{\int\hspace{-1em}-}}

\newcommand{\ph}{{\varphi}}

\newcommand{\norm}[2]{\left\|{#1}\right\|_{#2}}
\newcommand{\norms}[2]{\|{#1}\|_{#2}}

\newcommand{\fae}{\text{for a.e. }}

\newcommand{\set}[2]{\left\{\left.{#1}\ \right|\ {#2}\right\}}

\title{Generalized Morrey–Campanato estimates for elliptic equations with coefficients of integrable oscillation}
\author{Laurent Seppecher\footnote{Institut Camille Jordan, Ecole Centrale de Lyon \& Université Claude Bernard, Lyon, F-69003, France
(laurent.seppecher@ec-lyon.fr)}}

\begin{document}
\maketitle 

\abstract{This work concerns regularity properties of weak solutions to elliptic equations in divergence form $-\div(a\nabla u) = \div F$, under low regularity assumptions on both the coefficient $a$ and the source term $F$. We introduce generalized Morrey and Campanato spaces extending the classical definitions by replacing uniform boundedness requirements with suitable integrability conditions. Within this framework, we establish regularity estimates for the gradient of weak solutions in these generalized spaces. As applications, we recover classical Hölder and Lebesgue estimates and derive fractional Sobolev regularity results. In particular, the proposed approach yields fractional Sobolev estimates in situations where the coefficient may be discontinuous and the gradient of the solution is not expected to be locally bounded.}


\section{Introduction}

The aim of this article is to propose regularity estimates, using a generalized Morrey--Campanato framework, for weak solutions of elliptic equations in divergence form
\begin{equation}\label{eq:main}
-\div(a\nabla u) = \div F\quad \text{in }\Omega,
\end{equation}
where $\Omega$ is an open bounded domain of $\R^d$ with $d\geq 1$. The proposed approach may be viewed as an extension of the Morrey--Campanato approach to the classical Schauder theory, where Hölder continuity of both $a$ and $F$ is assumed. In the present work, this assumption is relaxed by requiring that $a$ satisfies the following conditions:
\medskip

$(H_1):\ a\in L^\infty(\Omega)$ and $a\geq \ul a$ almost everywhere in $\Omega$ for some positive constant $\ul a$.  
\medskip

$(H_2):$ There exists $g\in  L^q(\Omega)$ with $q\in [1,+\infty]$ and $\alpha\in (0,1]$ such that
\begin{equation}\nonumber
 |a(x)-a(y)|\leq g(x)|x-y|^\alpha \quad \fae\ (x,y)\in \Omega^2.
\end{equation}

The main objective is to obtain gradient estimates for the solution of \eqref{eq:main} under some conditions on $q$ and $\alpha$. The case $q=+\infty$ in $(H_2)$ is the classical $C^{0,\alpha}\big(\ol \Omega\big)$ hypothesis. Remark that this hypothesis is a stronger non-symmetric version of the fractional order Hajłasz condition $|a(x)-a(y)|\leq (g(x)+g(y))|x-y|^\alpha$ that is sometimes used in the literature (see \cite{di2014higher,di2014higher2,baison2017fractional}). The motivation of considering the non-symmetric hypothesis $(H_2)$ is that it can be nicely translated to an oscillation-type property:
\begin{equation}\label{eq:osc}
\omega_\alpha[a]:x\mapsto \sup_{r>0}r^{-\alpha}\norm{a-a_{B_\Omega(x,r)}}{L^\infty(B_\Omega(x,r))}\in L^q(\Omega),
\end{equation}
where $a_{B_\Omega(x,r)}$ is the mean value of the coefficient $a$ in $B(x,r)\cap \Omega$. We refer to subsection \ref{sub:H2} for the discussion on this point.  Re: programmation soutenance TFE

In this paper, we introduce generalized Morrey--Campanato spaces extending the classical definitions (see \cite{rafeiro2012morrey}). These spaces allow us to establish regularity estimates for $\nabla u$ under assumptions $(H_1)$ and $(H_2)$. As applications, we recover classical Hölder and Lebesgue estimates and obtain fractional Sobolev regularity results. 

\paragraph{Context:} The regularity theory for weak solutions of elliptic equation is a vast subject in the literature starting from the Hölder regularity results attributed to de Giorgi, Nash and Moser. See \cite{de1957sulla,nash1958continuity,moser1960new} for their foundational works. 

A major topic in this field is to understand the conditions under which the regularity (or integrability) of the source term $F$ is transmitted to the gradient of the weak solution $u$. The classical Schauder estimate provides that $\nabla u$ is $\cC^{0,\alpha}$ as soon as $a$ and $F$ are also $\cC^{0,\alpha}$. One of the standard approaches relies on Morrey--Campanato spaces, (see \cite{rafeiro2012morrey} for an overview on these spaces). The main idea to prove such estimate is to show that, under the condition $a\in \cC^{0,\alpha}$, a Morrey--type and a Campanato--type regularity are transmitted form $F$ to $\nabla u$. Then using equivalences between Campanato spaces and Hölder spaces, the Hölder regularity is deduced. This estimate can be found in many monographs such as \cite{gilbarg1998elliptic,giaquinta2013introduction,troianiello2013elliptic} for instance. 

Another major direction concerns $L^p$ estimates for the gradient.
The seminal result of Meyers \cite{meyers1963p} shows that
$\nabla u\in L^p$ for $p$ slightly greater than $2$ with no smoothness hypothesis on the coefficient. Under additional regularity assumptions,
Calderón--Zygmund type estimates yield $F\in L^p \imp \nabla u\in L^p$
and, in some situations, fractional Sobolev estimates \cite{mingione2007calderon}. 

More recent works have investigated fractional order Sobolev regularity of the solution in term the regularity of both the coefficient $a$  and the source term $F$. It turns out that different regimes exist for estimates in $W^{\alpha,p}$, in the three cases $\alpha q>d$, $\alpha q=d$ and $\alpha q<d$. See \cite{baison2017beltrami,clop2009beltrami,cruz2013beltrami,kuusi2012universal}. The main raison is that in the case $\alpha q=d$ and $\alpha q<d$, we loose the global boundedness of $\nabla u$. 

Other recent works \cite{kristensen2010boundary,baison2017fractional} have investigated low regularity hypothesis on the coefficient $a$ of the form 
\begin{equation}\label{eq:sym}
 |a(x)-a(y)|\leq (g(x)+g(y))|x-y|^\alpha \quad \fae\ (x,y)\in \Omega^2.
\end{equation}
(and some variants) where $g\in L^q(\Omega)$. Various integrability results are obtained using regularity estimates using Besov spaces. 

The approach presented in this paper differs from the mentioned work as the hypothesis $(H_2)$ is slightly stronger than \eqref{eq:sym}. Such hypothesis can be used under the form \eqref{eq:osc} in the proof of the Schauder estimate using an extended definition of the Morrey--Campanato spaces. Since $\omega_\alpha[a]$ is only assumed to belong to $L^q(\Omega)$, the classical Morrey--Campanato framework cannot be applied directly. This motivates the introduction of generalized Morrey and Campanato spaces in which the usual uniform control is replaced by an $L^q$-integrability condition. Within this framework, an extension of the Campanato approach to Schauder estimates can be carried out.

The generalized Morrey and Campanato spaces introduced here appear to be
related in spirit to several generalized Morrey-type constructions considered
in the literature. However, we did not find any direct characterization,
equivalence, or embedding results that would allow the present theory to be
deduced from existing frameworks. 

From the proposed generalized
Morrey--Campanato estimates, we recover both classical and less classical regularity results
and derive some new fractional Sobolev estimates. In particular, we obtain fractional Sobolev
estimates in the regime $\alpha q>d$ for low regularity source terms, as well as new fractional Sobolev estimates in the regime $\alpha q\le d$ where the coefficient $a$ may be discontinuous and the gradient is not expected to be locally bounded.
To the best of our knowledge, these estimates do not follow from existing results.

\paragraph{Overview of the approach.}
The main contribution of this work is the introduction of a generalized
Morrey--Campanato framework adapted to coefficients satisfying $(H_2)$.
Within this framework, the regularity theory relies on a two-step transfer
mechanism.

The first step consists in propagating a generalized Morrey regularity from the source term
$F$ to the gradient $\nabla u$ under assumption $(H_2)$ with Theorem~\ref{theo:1}. 

The second step upgrades this Morrey regularity into a generalized Campanato regularity.
Theorem~\ref{theo:2} shows that if $\nabla u$ already satisfies a generalized Morrey estimate
and if $F$ enjoys a stronger generalized Campanato regularity, then the same Campanato
regularity is transferred to $\nabla u$. In particular, the Campanato exponent increases
by $2\alpha$, reflecting the regularizing effect of the elliptic equation.

Combining these two transfer mechanisms with the embedding properties of generalized
Morrey--Campanato spaces provides a unified method for various gradient estimates.
Classical Sobolev, Hölder, and $L^p$ regularity results are recovered as particular cases,
while several new estimates are obtained in regimes not covered by the standard theory.

\paragraph{Paper outlines:} In section 2, we propose an extended definition of the Morrey spaces denoted $L^{p,\lambda,q}(\Omega)$. To do so, we demand that the $(p,\lambda)$-concentration function defined by $x\mapsto \sup_{r>0}r^{-\lambda/p}\norm{u}{L^p(B_\Omega(x,r))}$ to belong to $L^q(\Omega)$ instead of being uniformly bounded as it is required for the classical Morrey spaces definition. We then prove a series of injection rules for these generalized Morrey spaces. 

In section 3, we follow the same idea to propose an extended definition of the Campanato spaces denoted $L_C^{p,\lambda,q}(\Omega)$. We also prove a series of injection rules for these generalized Campanato spaces, notably with fractional Sobolev spaces. 

In section 4, we prove the two main results of this work. The first one (Theorem \ref{theo:1}) shows that under the hypotheses $(H_1)$ and $(H_2)$, the weak solution $u$ of \eqref{eq:main} satisfies
\begin{equation}\nonumber
F\in L^{2,\lambda, q'}(\Omega)\quad\imp\quad \nabla u\in L^{2,\lambda, q'}(\Omega'),
\end{equation}
for a smooth subdomain $\Omega'\Subset \Omega$ and under some conditions on $\lambda$ and $q'$. In other words, a generalized Morrey--type regularity is transmitted from $F$ to $\nabla u$. The second main result  (Theorem \ref{theo:2}) shows that if $\nabla u$ has a generalized Morrey--type regularity, one can expect that a  generalized Campanato--type regularity is also transmitted from $F$ to $\nabla u$. More precisely, under the hypotheses $(H_1)$ and $(H_2)$ with exponents $\alpha$ and $q$, we prove that 
\begin{equation}\nonumber
\nabla u\in L^{2,\lambda,q'}(\Omega)\quad  \text{and}\quad F\in L_C^{2,\lambda+2\alpha,q''}(\Omega)\quad\imp\quad \nabla u\in L_C^{2,\lambda+2\alpha,q''}(\Omega'),
\end{equation}
for a smooth subdomain $\Omega'\Subset \Omega$ and under some conditions on $\lambda$ and $q''$. 

In section 5, we show that the use of the two main theorems leads to easily retrieve various known gradient estimates in Hölder spaces, and Lebesgue spaces. We also prove some low partial order Sobolev estimates under weak conditions on the elliptic coefficient  $a$ which are new, to the best of our knowledge. In particular, in Corollary \ref{cor:low}, we deal with the case $\alpha q\leq d$ in the hypothesis $(H_2)$ which allows for a discontinuous elliptic coefficient $a$.

\section{Generalized Morrey spaces}

We recall that throughout this work, $\Omega$ is a bounded open domain of $\R^d$. In some results, we will need the following smoothness hypothesis for the domain:  

\begin{definition} For all $x\in\Omega$ and $r>0$, denote $B_\Omega(x,r):=B(x,r)\cap\Omega$ and $|B_\Omega(x,r)|$ its Lebesgue measure. For $\theta>0$, we say that $\Omega$ is $\theta$-smooth if
\begin{equation}\nonumber
\forall x\in\Omega,\quad \forall r\in (0,\diam\Omega),\quad |B_\Omega(x,r)|\geq \theta v_dr^d.
\end{equation}
where $v_d$ is the volume of the unit ball in $\R^d$. For example, a Lipschitz domain is $\theta$-smooth.
\end{definition}

 Classical Morrey spaces $L^{p,\lambda}(\Omega)$  are defined as classes of functions in $L^p(\Omega)$ such that 
\begin{equation}\nonumber
\sup_{x\in\Omega}\sup_{r>0}r^{-\lambda}\int_{B_\Omega(x,r)}|u|^p<+\infty. 
\end{equation}
For background on Morrey and Campanato spaces, see \cite{rafeiro2012morrey}. A natural way to relax the classical Morrey condition is to replace the uniform control with respect to the center variable $x$ by an
$L^q$-integrability condition on $\Omega$. This leads to the following generalized Morrey spaces definition.

\begin{definition} For any $p\in[1,+\infty)$, $\lambda\geq 0$, $u\in L^p(\Omega)$, we define the $(p,\lambda)$-concentration function of $u$ as 
\begin{equation}
E_\Omega^{p,\lambda}[u](x):=\sup_{r>0}r^{-\lambda/p}\norm{u}{L^p(B_\Omega(x,r))},\quad \forall x\in\Omega.
\end{equation}
For any $q\in [1,+\infty]$, we define the generalized Morrey space as the class of all functions in $L^p(\Omega)$ such that $E_\Omega^{p,\lambda}[u]$ belongs to $L^q(\Omega)$:
\begin{equation}\nonumber
L^{p,\lambda,q}(\Omega):=\set{u\in L^p(\Omega)}{E_\Omega^{p,\lambda}[u]\in L^q(\Omega)}. 
\end{equation}
endowed with the norm
\begin{equation}\nonumber
\norm{u}{L^{p,\lambda,q}(\Omega)}:=\norms{E_\Omega^{p,\lambda}[u]}{L^{q}(\Omega)}. 
\end{equation}
\end{definition}

The classical Morrey spaces are recovered as the particular case
$L^{p,\lambda,\infty}(\Omega)$. We begin by collecting some basic
properties of the concentration function $E_\Omega^{p,\lambda}[u]$,
which will be useful for establishing the embedding results below.
These properties are classical from the standard Morrey--Campanato spaces theory. 

\begin{proposition}\label{prop:E} For any $p\in[1,+\infty)$, $\lambda\geq 0$ and $u\in L^p(\Omega)$,

\begin{enumerate}

\item  For all $x\in \Omega$, $E_\Omega^{p,0}[u](x)=\norm{u}{L^p(\Omega)}$. 

\item For all $x\in \Omega$, $E_\Omega^{p,\lambda}[u](x)\geq \diam(\Omega)^{-\lambda/p}\norm{u}{L^p(\Omega)}$. 

\item For all $x\in\Omega$, the map $u\mapsto E_\Omega^{p,\lambda}[u](x)$ is a norm (possibly infinite) in $L^p(\Omega)$. 

\item For all $p'\geq p$, and $\lambda'\geq 0$ such that $(d-\lambda')/p'\leq(d-\lambda)/p$, we have $$E_\Omega^{p,\lambda}[u]\leq C E_\Omega^{p',\lambda'}[u],$$ where $C:=C(d,p,p',\lambda,\lambda',\Omega)>0$. 

\item For almost every $x\in\Omega$, $E_\Omega^{p,d}[u](x)\geq v_d^{1/p}|u|(x)$. 

\item For all $p'\geq p$ and $\lambda\leq d(1-p/p')$, we have $$E_\Omega^{p,\lambda}[u]\leq C\norm{u}{L^{p'}(\Omega)},$$ where  $C:=C(d,p,p',\lambda,\Omega)>0$.

\item For $\lambda\leq d$, then $E_\Omega^{p,\lambda}[u]\leq C\norm{u}{L^\infty(\Omega)}$ where $C:=C(d,p,\lambda,\Omega)>0$. 

\end{enumerate}

\end{proposition}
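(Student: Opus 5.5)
The plan is to verify the seven items one by one, directly from the definition of $E_\Omega^{p,\lambda}[u]$. The only tools needed are monotonicity of $r\mapsto \norm{u}{L^p(B_\Omega(x,r))}$, the fact that $B_\Omega(x,r)=\Omega$ once $r>\diam\Omega$, Hölder's inequality, and the Lebesgue differentiation theorem.

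\emph{Items 1, 2, 3.} For $\lambda=0$ the quantity $\norm{u}{L^p(B_\Omega(x,r))}$ is nondecreasing in $r$ and equals $\norm{u}{L^p(\Omega)}$ for $r\geq\diam\Omega$, which gives item 1. For item 2, take $r=\diam\Omega$ in the supremum. Item 3 holds because each map $u\mapsto r^{-\lambda/p}\norm{u}{L^p(B_\Omega(x,r))}$ is a seminorm, and a supremum of seminorms is subadditive and positively homogeneous. Definiteness follows from item 2, since $E_\Omega^{p,\lambda}[u](x)=0$ forces $\norm{u}{L^p(\Omega)}=0$.

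\emph{Item 4.} This is the key step. Fix $x$ and split the supremum according to $r$.
\begin{itemize}
\item For $r<\diam\Omega$, Hölder's inequality with exponent $p'/p$ gives
\[
\norm{u}{L^p(B_\Omega(x,r))}\leq |B_\Omega(x,r)|^{1/p-1/p'}\norm{u}{L^{p'}(B_\Omega(x,r))}\leq (v_dr^d)^{1/p-1/p'}\, r^{\lambda'/p'}E_\Omega^{p',\lambda'}[u](x).
\]
Multiplying by $r^{-\lambda/p}$, the power of $r$ is $d/p-d/p'+\lambda'/p'-\lambda/p=(d-\lambda)/p-(d-\lambda')/p'\geq 0$. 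Hence the factor is bounded by $\diam(\Omega)$ raised to this nonnegative power.
\item For $r\geq\diam\Omega$, we have $r^{-\lambda/p}\norm{u}{L^p(B_\Omega(x,r))}\leq\diam(\Omega)^{-\lambda/p}\norm{u}{L^p(\Omega)}$. This is the value already attained at $r=\diam\Omega$, so it is covered by the first case (passing to the limit $r\uparrow\diam\Omega$).
\end{itemize}
Note that only the upper bound $|B_\Omega|\leq v_dr^d$ is used here, so no $\theta$-smoothness is needed. The point requiring care is that the exponent of $r$ has the right sign, and that sign condition is exactly the hypothesis $(d-\lambda')/p'\leq(d-\lambda)/p$.

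\emph{Items 5, 6, 7.} For item 5, at every Lebesgue point $x$ of $|u|^p$ (these have full measure) we have
\[
r^{-d}\int_{B_\Omega(x,r)}|u|^p=r^{-d}\int_{B(x,r)}|u|^p\to v_d|u(x)|^p,
\]
because $B(x,r)\subset\Omega$ for small $r$, $\Omega$ being open. Hence the supremum is at least $v_d^{1/p}|u|(x)$.

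For item 6, apply Hölder's inequality as in item 4, but bound $\norm{u}{L^{p'}(B_\Omega(x,r))}$ by $\norm{u}{L^{p'}(\Omega)}$. This yields the factor $r^{d(1/p-1/p')-\lambda/p}$, whose exponent is nonnegative precisely when $\lambda\leq d(1-p/p')$. The factor is then bounded for $r\leq\diam\Omega$, and the case of large $r$ is handled as before.

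Item 7 follows in the same way from $\norm{u}{L^p(B_\Omega(x,r))}\leq (v_dr^d)^{1/p}\norm{u}{L^\infty(\Omega)}$, which is the limiting case $p'=\infty$ of item 6 and requires $\lambda\leq d$.

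I do not expect any step to be a real obstacle. The only points requiring care are the exponent bookkeeping in item 4 and the restriction to Lebesgue points in item 5.
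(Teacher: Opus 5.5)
Your proof is correct and follows the paper's argument essentially item by item: the H\"older split at $r=\diam\Omega$ for item 4 with the same exponent bookkeeping, Lebesgue differentiation for item 5, and H\"older against $L^\infty$ for item 7. The only differences are cosmetic: the paper treats $r>\diam\Omega$ in item 4 via H\"older on all of $\Omega$ together with item 2, and it obtains item 6 from item 4 with $\lambda'=0$ plus item 1 rather than redoing the H\"older estimate.
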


\begin{proof} 1., 2. and 3. are trivial from the definition. We recall that $v_d:=|B(0,1)|$ is the measure of the unit ball in $\R^d$. \smallskip

\noindent 4. Call $B:=B_\Omega(x,r)$, by Hölder inequality, we have $\norm{u}{L^{p}(B)}\leq \norm{u}{L^{p'}(B)}v_d^{1/p^*}r^{d/p^*}$ where $1/p^*=1/p-1/p'$. Hence,
\begin{equation}\nonumber
r^{-\lambda/p}\norm{u}{L^{p}(B)}\leq  v_d^{1/p^*}r^{-\lambda'/p'}\norm{u}{L^{p'}(B)}r^\delta,
\end{equation}
with $\delta:=(d-\lambda)/p-(d-\lambda')/p'\geq 0$. For $r\leq \diam(\Omega)$, we get
\begin{equation}\nonumber
r^{-\lambda/p}\norm{u}{L^{p}(B)}\leq  v_d^{1/p^*}r^{-\lambda'/p'}\norm{u}{L^{p'}(B)}\diam(\Omega)^\delta
\end{equation}
and for $r> \diam(\Omega)$ we have using 2.,
\begin{equation}\nonumber
\begin{aligned}
r^{-\lambda/p}\norm{u}{L^{p}(B)} &\leq  \diam(\Omega)^{-\lambda/p}\norm{u}{L^p(\Omega)}\leq \diam(\Omega)^{-\lambda/p}|\Omega|^{1/p^*}\norm{u}{L^{p'}(\Omega)}\\
&\leq  \diam(\Omega)^{\lambda'/p'-\lambda/p}|\Omega|^{1/p^*}E_\Omega^{p',\lambda'}[u](x). 
\end{aligned}
\end{equation}
We conclude by taking the supremum over $r>0$. \smallskip

\noindent 5. The Lebesgue differentiation theorem provides that

\begin{equation}\nonumber
\lim_{r\to 0}v_d^{-1}r^{-d}\int_{B_\Omega(x,r)}|u|^p=|u|^p(x),\quad \text{f.a.e. } x\in \Omega.
\end{equation}
As a consequence, $E_\Omega^{p,d}[u](x)\geq \lim_{r\to 0}r^{-d/p}\norm{u}{L^p(B_\Omega(x,r))}=v_d^{1/p}|u|(x)$ for almost every $x\in\Omega$.\smallskip

\noindent 6. Apply point 4 with $\lambda'=0$ then apply point 1.\smallskip

\noindent 7. Call $B:=B_\Omega(x,r)$, by Hölder inequality, we have $\norm{u}{L^{p}(B)}\leq v_d^{1/p}r^{d/p}\norm{u}{L^{\infty}(\Omega)}$. Hence, for $r\leq \diam(\Omega)$, 
\begin{equation}\nonumber
r^{-\lambda/p}\norm{u}{L^{p}(B)}\leq v_d^{1/p}\diam(\Omega)^{(d-\lambda)/p}\norm{u}{L^{\infty}(\Omega)},
\end{equation}
as $d-\lambda\geq 0$. For $r> \diam(\Omega)$, we directly get $r^{-\lambda/p}\norm{u}{L^{p}(B)}\leq \diam(\Omega)^{-\lambda/p}\norm{u}{L^p(\Omega)}\leq C\norm{u}{L^\infty(\Omega)}$. Taking the supremum over $r>0$ gets to the conclusion. 
\end{proof}

\begin{remark} From the above properties, one can check that $\norm{\cdot}{L^{p,\lambda,q}(\Omega)}$ is indeed a norm and makes $L^{p,\lambda,q}(\Omega)$ a Banach space.  
\end{remark}

The previous properties immediately yield several embedding rules for generalized Morrey spaces. Throughout the paper, for two normed spaces $E$ and $F$, the notation $E\inj F$ means that $E$ is continuously embedded into $F$, while $E\sim F$ means that both embeddings $E\inj F$ and $F\inj E$ hold.

\begin{proposition}[Injection rules]\label{prop:injmorrey} For any $p\in[1,+\infty)$, $\lambda\geq 0$ and $q\in [1,+\infty]$ the following holds.  

\begin{enumerate}

\item $L^{p,0,q}(\Omega)\sim L^p(\Omega)$. 

\item For $p'\in [p,+\infty)$, $q'\in [q,+\infty]$ and $\lambda'\geq 0$ such that $(\lambda-d)/p\leq(\lambda'-d)/p'$, we have $$L^{p',\lambda',q'}(\Omega)\inj L^{p,\lambda,q}(\Omega).$$ 

\item For $\lambda>d$, we have $L^{p,\lambda,q}(\Omega)=\{0\}$.

\item $L^{p,d,q}(\Omega)\inj L^q(\Omega)$ and if $p<q\leq +\infty$ and $\Omega$ is $\theta$--smooth, then $L^{p,d,q}(\Omega)\sim L^q(\Omega)$. 

\item If $p\leq q<+\infty$, $L^{q}(\Omega)\inj L^{p,d(1-p/q),\infty}(\Omega)$. 

\end{enumerate}

\end{proposition}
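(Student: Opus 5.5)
Each item follows from Proposition~\ref{prop:E}; the work is to pick the right pointwise bound on $E_\Omega^{p,\lambda}[u]$ and then integrate it in $L^q(\Omega)$, using that $\Omega$ is bounded.

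\textbf{Items 1 and 2.} For item 1, Proposition~\ref{prop:E}.1 says $E_\Omega^{p,0}[u]$ is the constant $\norm{u}{L^p(\Omega)}$. Hence $\norm{u}{L^{p,0,q}(\Omega)}=|\Omega|^{1/q}\norm{u}{L^p(\Omega)}$, with the convention $|\Omega|^{1/\infty}=1$. For item 2, the condition $(\lambda-d)/p\le(\lambda'-d)/p'$ is exactly $(d-\lambda')/p'\le (d-\lambda)/p$. So Proposition~\ref{prop:E}.4 gives the pointwise bound $E_\Omega^{p,\lambda}[u]\le C E_\Omega^{p',\lambda'}[u]$. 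We then take $L^q$ norms and use $\norm{\cdot}{L^q(\Omega)}\le |\Omega|^{1/q-1/q'}\norm{\cdot}{L^{q'}(\Omega)}$, valid since $q\le q'$ and $\Omega$ is bounded. One should also note that $u\in L^{p'}$ implies $u\in L^p$.

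\textbf{Item 3.} Let $\lambda>d$ and $u\in L^{p,\lambda,q}(\Omega)$. Then $E:=E_\Omega^{p,\lambda}[u]$ lies in $L^q$, so it is finite almost everywhere. At such a point $x$, for every $r>0$,
$$\int_{B_\Omega(x,r)}|u|^p\le E(x)^p r^{\lambda}.$$
Dividing by $v_dr^d$ and letting $r\to0$, the Lebesgue differentiation theorem gives $|u(x)|^p=0$ at almost every such point, using $\lambda-d>0$. Hence $u=0$ almost everywhere. (Equivalently, Proposition~\ref{prop:E}.5 shows that $E_\Omega^{p,d}$ is finite a.e. while $r^{-(\lambda-d)/p}\to\infty$.)

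\textbf{Item 4.} The embedding $L^{p,d,q}\inj L^q$ is Proposition~\ref{prop:E}.5: $v_d^{1/p}|u|\le E_\Omega^{p,d}[u]$ a.e., and we integrate. The converse is the main obstacle. Since $B_\Omega(x,r)$ has measure at most $v_dr^d$, we have, for $x\in\Omega$ and $r<\diam\Omega$,
$$r^{-d}\int_{B_\Omega(x,r)}|u|^p\le v_d\, M(|u|^p\mathbf 1_\Omega)(x),$$
where $M$ is the Hardy--Littlewood maximal function on $\R^d$. For $r\ge\diam\Omega$ the quantity is bounded by $\diam(\Omega)^{-d}\norm{u}{L^p}^p\le C\norm{u}{L^q}^p$. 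Hence $E_\Omega^{p,d}[u]\le C\big(M(|u|^p\mathbf 1_\Omega)\big)^{1/p}+C\norm{u}{L^q}$. Because $q/p>1$, the maximal theorem gives $\norm{M(|u|^p)}{L^{q/p}}\le C\norm{|u|^p}{L^{q/p}}=C\norm{u}{L^q}^p$; the case $q=\infty$ is trivial. Therefore $\norm{E_\Omega^{p,d}[u]}{L^q}\le C\norm{u}{L^q}$. It is the strict inequality $p<q$ that allows the maximal theorem. The $\theta$-smoothness would be needed only if one normalizes by $|B_\Omega|$ rather than by $r^d$; with the $r^d$ normalization used above I expect it is not essential, but I will invoke it if needed to compare the averages.

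\textbf{Item 5.} This is Proposition~\ref{prop:E}.6 with $p'=q$ and $\lambda=d(1-p/q)$. It gives $E_\Omega^{p,\lambda}[u]\le C\norm{u}{L^q(\Omega)}$ uniformly in $x$, so the $L^\infty$ norm of $E_\Omega^{p,\lambda}[u]$ is controlled by $C\norm{u}{L^q(\Omega)}$.
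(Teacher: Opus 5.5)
Your proof is correct. Items 1, 2, 3, 5 and the first half of item 4 follow the paper's argument essentially verbatim: pointwise bounds from Proposition~\ref{prop:E} integrated over the bounded domain, and Lebesgue differentiation for item 3.

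The genuine difference is the reverse embedding $L^q(\Omega)\inj L^{p,d,q}(\Omega)$ in item 4. The paper uses the maximal operator relative to $\Omega$, averaging over $B_\Omega(x,r)$, and invokes $\theta$-smoothness to get its boundedness on $L^{q/p}(\Omega)$. You instead extend $|u|^p$ by zero and use the ordinary Hardy--Littlewood maximal function on $\R^d$. Because $|B_\Omega(x,r)|\le v_d r^d$, your bound $r^{-d}\int_{B_\Omega(x,r)}|u|^p\le v_d\, M(|u|^p\mathbf 1_\Omega)(x)$ actually holds for every $r>0$, so the split at $r=\diam\Omega$ is unnecessary. The classical maximal theorem with exponent $q/p>1$ then finishes.

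Your route therefore gives $L^{p,d,q}(\Omega)\sim L^q(\Omega)$ for $p<q\le+\infty$ with no geometric assumption on $\Omega$, confirming your guess that $\theta$-smoothness is inessential here. It is the paper's $\Omega$-relative normalization that brings that hypothesis in. The same inequality $|B_\Omega(x,r)|\le v_dr^d$ also shows that the correct constant in the paper's comparison $E_\Omega^{p,d}[u]^p\le C\,M[|u|^p]$ is $C=v_d$.

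One minor point: your parenthetical alternative in item 3 misquotes point 5 of Proposition~\ref{prop:E}. That point is a lower bound on $E_\Omega^{p,d}[u]$ and says nothing about finiteness. What the alternative actually uses is finiteness of $E_\Omega^{p,\lambda}[u](x)$ together with $r^{-d/p}\norm{u}{L^p(B(x,r))}\to v_d^{1/p}|u(x)|$, which is just your main argument again.
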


\begin{proof} 1. Direct from Proposition \ref{prop:E} point 1 and applying the $L^q$-norm.\smallskip

\noindent 2. From Proposition \ref{prop:E} point 4, apply the $L^q$-norm. \smallskip

\noindent 3. Take $u\in L^{p,\lambda,q}(\Omega)$. As $u\in L^p(\Omega)$, the Lebesgue differentiation theorem says that 
\begin{equation}\nonumber
\frac{1}{|B_\Omega(x,r)|}\int_{B_\Omega(x,r)}|u|^p(y)\td y \overset{r\to 0}\lra |u|^p(x),\quad \fae x\in\Omega. 
\end{equation}
Fix $x\in\Omega$ such that the above convergence is true and so that $E_\Omega^{p,\lambda}[u](x)<+\infty$ (true for almost every $x\in\Omega$).  We remark that for $r$ small enough, $B(x,r)\subset \Omega$ and 
\begin{equation}\nonumber
\frac{1}{|B_\Omega(x,r)|}\int_{B_\Omega(x,r)}|u|^p \leq \frac{r^{\lambda-d}}{v_d} \left(E_\Omega^{p,\lambda}[u](x)\right)^p \overset{r\to 0}\lra 0,
\end{equation}
as $\lambda>d$. As a consequence, $u(x)=0$ for almost every $x$ in $\Omega$. 
\smallskip

\noindent 4. From Proposition \ref{prop:E} point 5 we get 
\begin{equation}\nonumber
|u|\leq  v_d^{-1/p}E_\Omega^{p,d}[u]\quad\text{a.e. in }\Omega.
\end{equation}
Taking the $L^q$-norm, we get the first injection. For the reciprocal injection, take $q\in(p,+\infty)$ and define the maximal function of Hardy–Littlewood 
\begin{equation}\nonumber
\forall v\in L^r(\Omega),\quad M[v](x):=\sup_{r>0}\frac{1}{|B_\Omega(x,r)|}\int_{B_\Omega(x,r)}|v|,\quad \forall x\in\Omega.
\end{equation}
It is known that the operator $M$ is bounded from $L^s(\Omega)$ to  $L^s(\Omega)$ as soon as $1<s\leq+\infty$ and $\Omega$ being $\theta$--smooth, i.e. there exists $C>0$ such that
\begin{equation}\nonumber
\forall v\in L^r(\Omega),\quad \norm{M[v]}{L^s(\Omega)}\leq C \norm{v}{L^s(\Omega)}. 
\end{equation}
See \cite{stein1970singular}. We then remark that $E_\Omega^{p,d}[u]^p \leq 1/(\theta v_d) M[u^p]$ and then as $q>p$, 
\begin{equation}\nonumber
\begin{aligned}
E_\Omega^{p,d}[u]^q &\leq \wt C M[u^p]^{q/p}\\
\int_\Omega E_\Omega^{p,d}[u]^q &= \wt C\int_\Omega M[u^p]^{q/p} = \wt C\norm{M[u^p]}{L^{q/p}(\Omega)}^{q/p} \leq \wt CC^{q/p}\norm{u^p}{L^{q/p}(\Omega)}^{q/p} = \wt CC^{q/p}\int_\Omega |u|^q\\
\norm{E_\Omega^{p,d}[u]}{L^q(\Omega)} &\leq \wt C^{1/q}C^{1/p}\norm{u}{L^q(\Omega)},
\end{aligned}
\end{equation}
This proves the reciprocal injection. The case $q=+\infty$ is directly deduced from Proposition \ref{prop:E} point 7.\smallskip

\noindent 5. From Proposition \ref{prop:E} point 6, we just apply the $L^\infty$--norm.  
\end{proof}

\begin{remark}
The parameter $\lambda=d$ plays a distinguished role.
In this case, the generalized Morrey spaces coincide with
Lebesgue spaces:
\[
L^{p,d,q}(\Omega)\sim L^q(\Omega),
\]
when $\Omega$ is $\theta$--smooth. 
\end{remark}

\section{Generalized Campanato spaces}

In this section, we extend the classical Campanato spaces following the same philosophy as for generalized Morrey spaces. We then establish several embedding results extending the classical Campanato theory. In particular, we prove links between generalized Campanato spaces and fractional Sobolev spaces. For any measurable set $D\subset\Omega$ with positive measure and any $u\in L^1(\Omega)$, we denote its average on $D$ by
\begin{equation}\nonumber
u_D:=\fint_D u := \frac{1}{|D|}\int_D u.
\end{equation}
For any $x\in\Omega$ and $r>0$, we define
\begin{equation}\nonumber
B_\Omega(x,r):=B(x,r)\cap\Omega.
\end{equation}
We also denote by $u_r(x)$ the average of $u$ on $B_\Omega(x,r)$:
\begin{equation}\nonumber
u_r(x):=u_{B_\Omega(x,r)}
=\frac{1}{|B_\Omega(x,r)|}\int_{B_\Omega(x,r)}u.
\end{equation}

\begin{definition} For any $p\in[1,+\infty)$, $\lambda\geq 0$, $u\in L^p(\Omega)$, we define the $(p,\lambda)$--oscillation function of $u$ by 
\begin{equation}
O_\Omega^{p,\lambda}[u](x):=\sup_{r>0}r^{-\lambda/p}\norm{u-u_r(x)}{L^p(B_\Omega(x,r))},\quad \forall x\in\Omega.
\end{equation}
For any $q\in [1,+\infty]$, we define the generalized Campanato space as the class of all functions in $L^p(\Omega)$ such that $O_\Omega^{p,\lambda}[u]$ belongs to $L^q(\Omega)$:
\begin{equation}\nonumber
L_C^{p,\lambda,q}(\Omega):=\set{u\in L^p(\Omega)}{O_\Omega^{p,\lambda}[u]\in L^q(\Omega)}. 
\end{equation}
endowed with the norm
\begin{equation}\nonumber
\norm{u}{L_C^{p,\lambda,q}(\Omega)}:=\norm{u}{L^{p}(\Omega)}+\norms{O_\Omega^{p,\lambda}[u]}{L^{q}(\Omega)}. 
\end{equation}
\end{definition}

\begin{remark}
It is straightforward to verify that
$\norm{\cdot}{L_C^{p,\lambda,q}(\Omega)}$
defines a norm on
$L_C^{p,\lambda,q}(\Omega)$.
Moreover, equipped with this norm,
$L_C^{p,\lambda,q}(\Omega)$ is a Banach space.
\end{remark}

\begin{lemma} We can use an alternative, and very useful, equivalent definition of the $(p,\lambda)$--oscillation:
\begin{equation}
\wt O_\Omega^{p,\lambda}[u](x):=\sup_{r>0}r^{-\lambda/p}\inf_{c\in\R}\norm{u-c}{L^p(B_\Omega(x,r))}
\end{equation}
as we have $$\wt O_\Omega^{p,\lambda}[u]\leq O_\Omega^{p,\lambda}[u]\leq 2\wt O_\Omega^{p,\lambda}[u].$$
\end{lemma}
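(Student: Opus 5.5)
The plan is to prove both inequalities pointwise: fix $x\in\Omega$ and $r>0$, write $B:=B_\Omega(x,r)$, and compare the two quantities ball by ball. Since the same factor $r^{-\lambda/p}$ multiplies both, it is enough to show
\begin{equation}\nonumber
\inf_{c\in\R}\norm{u-c}{L^p(B)}\leq \norm{u-u_r(x)}{L^p(B)}\leq 2\inf_{c\in\R}\norm{u-c}{L^p(B)}.
\end{equation}
Multiplying by $r^{-\lambda/p}$ and taking the supremum over $r>0$ then yields $\wt O_\Omega^{p,\lambda}[u]\leq O_\Omega^{p,\lambda}[u]\leq 2\wt O_\Omega^{p,\lambda}[u]$. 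One small point needs attention first. Since $x\in\Omega$ and $\Omega$ is open, $|B|>0$, so $u_r(x)$ is well defined. Moreover $u\in L^p(\Omega)\subset L^1(\Omega)$ because $\Omega$ is bounded, so all the averages are finite.

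The left inequality is immediate: $c=u_r(x)$ is one admissible constant in the infimum.

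For the right inequality, I fix an arbitrary $c\in\R$ and use the triangle inequality in $L^p(B)$:
\begin{equation}\nonumber
\norm{u-u_r(x)}{L^p(B)}\leq \norm{u-c}{L^p(B)}+\norm{c-u_r(x)}{L^p(B)}=\norm{u-c}{L^p(B)}+|B|^{1/p}|u_r(x)-c|.
\end{equation}
The key step is bounding the constant term. Since $c$ is constant, $u_r(x)-c=\fint_B(u-c)$. Jensen's inequality, or equivalently Hölder's inequality with exponents $p$ and $p/(p-1)$ on the probability measure $|B|^{-1}\td y$, gives
\begin{equation}\nonumber
|u_r(x)-c|\leq \left(\fint_B|u-c|^p\right)^{1/p}=|B|^{-1/p}\norm{u-c}{L^p(B)}.
\end{equation}
It follows that $\norm{u-u_r(x)}{L^p(B)}\leq 2\norm{u-c}{L^p(B)}$ for every $c\in\R$. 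Taking the infimum over $c$ gives the right inequality.

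I do not expect a real obstacle in this argument. The only point requiring care is this Jensen/Hölder step, which is exactly what makes the factor $|B|^{1/p}$ cancel and leaves the constant $2$, independent of $d$, $p$, $r$ and $\Omega$. The inequalities also hold when the suprema are infinite, since they are established for each $r$ before the supremum is taken.
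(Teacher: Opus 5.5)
Your proof is correct and follows the paper's argument: fix $c$, apply the triangle inequality on $B_\Omega(x,r)$, and bound $|u_r(x)-c|\leq |B_\Omega(x,r)|^{-1/p}\|u-c\|_{L^p(B_\Omega(x,r))}$ by Jensen/H\"older. Then take the infimum over $c$ and the supremum over $r>0$.
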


\begin{proof} The first inequality is obvious. For the second one, fix $c\in\R$, $x\in\Omega$ and $r>0$. By the triangle
inequality we write
\begin{equation}\nonumber
\norm{u-u_r(x)}{L^p(B_\Omega(x,r))}
\leq
\norm{u-c}{L^p(B_\Omega(x,r))}
+
\norm{u_r(x)-c}{L^p(B_\Omega(x,r))}.
\end{equation}
Moreover,
\begin{equation}\nonumber
|u_r(x)-c|
\leq
\fint_{B_\Omega(x,r)} |u-c|
\leq
|B_\Omega(x,r)|^{-1/p}
\norm{u-c}{L^p(B_\Omega(x,r))},
\end{equation}
and therefore
\begin{equation}\nonumber
\norm{u_r(x)-c}{L^p(B_\Omega(x,r))}
\leq
\norm{u-c}{L^p(B_\Omega(x,r))}.
\end{equation}
Thus,
\begin{equation}\nonumber
\norm{u-u_r(x)}{L^p(B_\Omega(x,r))}
\leq
2\norm{u-c}{L^p(B_\Omega(x,r))}.
\end{equation}
Taking the infimum over $c\in\R$ and then the supremum over $r>0$ gives the demanded inequality
$O_\Omega^{p,\lambda}[u]\leq2\wt O_\Omega^{p,\lambda}[u]$.
\end{proof}

We now prove a series of embedding results for generalized Campanato spaces. Some of them follow directly from the definitions, whereas others require technical results concerning the relationship between the $(p,\lambda)$--oscillation function and the local mean function. These results are presented in Appendix \ref{app:localmean}. It may therefore be useful to consult this appendix before reading the proofs of the following embeddings.

\begin{proposition}[Injection rules]\label{prop:injcamp}\ Let $p\in [1,+\infty)$, $\lambda\geq 0$ and $q\in [1,+\infty]$,

\begin{enumerate}

\item $L^{p,\lambda,q}(\Omega)\inj L_C^{p,\lambda,q}(\Omega)$. 

\item For $p'\in [p,+\infty)$, $q'\in [q,+\infty]$ and $\lambda'\geq 0$ such that $(\lambda-d)/p\leq(\lambda'-d)/p'$, we have  $$L_C^{p',\lambda',q'}(\Omega)\inj L_C^{p,\lambda,q}(\Omega).$$ 

\item If $\Omega$ is $\theta-smooth$, for $\lambda\in [0,d)$, $L_C^{p,\lambda,q}(\Omega)\sim L^{p,\lambda,q}(\Omega)$. 

\item  If $\Omega$ is $\theta-smooth$, for $0\leq \alpha'<\alpha<1$ and $q<+\infty$ we have $L_C^{p,d+\alpha p,q}(\Omega)\inj W^{\alpha',q}(\Omega)$.  

\item If $\Omega$ is $\theta-smooth$, for $\lambda>d$, $L_C^{p,\lambda,q}(\Omega)\inj L^{p,d,q}(\Omega)$. 

\item For $\alpha\in(0,1)$, if $p\leq q$ then $W^{\alpha,q}(\Omega)\inj L_C^{p,d+\alpha p,q}(\Omega)$. 

\item If $\Omega$ is $\theta-smooth$ and $p< q<+\infty$ then $L_C^{p,d,q}(\Omega)\sim L^q(\Omega)$. 

\item If $\Omega$ is $\theta-smooth$, for $\alpha\in(0,1)$ we have $L_C^{p,d+\alpha p,\infty}(\Omega)\sim \cC^{0,\alpha}\big(\ol\Omega\big)$. 
\end{enumerate}
\end{proposition}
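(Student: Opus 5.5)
I will treat the eight items separately, grouping them by the tools they need. Items 1 and 2 come straight from the definitions. For item 1, I use $\norm{u-u_r(x)}{L^p(B)}\leq 2\norm{u}{L^p(B)}$, which is the Lemma with $c=0$. This gives $O_\Omega^{p,\lambda}[u]\leq 2E_\Omega^{p,\lambda}[u]$. I also need the $L^p$ part of the norm, and Proposition \ref{prop:E} point 2 gives $\norm{u}{L^p(\Omega)}\leq \diam(\Omega)^{\lambda/p}E_\Omega^{p,\lambda}[u](x)$ for every $x$. Integrating in $L^q$ over the bounded set $\Omega$ then yields the embedding. For item 2, I repeat the Hölder argument of Proposition \ref{prop:E} point 4 with $u$ replaced by $u-c$ and take the infimum over $c$, using the equivalent oscillation $\wt O$. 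This gives $O^{p,\lambda}\leq C\,O^{p',\lambda'}$ pointwise, and I then take $L^q$ norms, using $q\leq q'$ and boundedness of $\Omega$.

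Items 3, 5, 7 and 8 rest on the local-mean estimates of Appendix \ref{app:localmean}. The key one is the classical Campanato dyadic bound. For $\rho<r$ with $\rho=2^{-k}r$, telescoping and $\theta$-smoothness give
\[
|u_\rho(x)-u_r(x)|\leq C\sum_{j=0}^{k} (2^{-j}r)^{(\lambda-d)/p}\,O_\Omega^{p,\lambda}[u](x).
\]
\begin{itemize}
\item When $\lambda<d$ this sum is dominated by its smallest radius and is $\lesssim \rho^{(\lambda-d)/p}O(x)$. Writing $\norm{u}{L^p(B_\rho)}\leq \norm{u-u_\rho}{L^p(B_\rho)}+|B_\rho|^{1/p}|u_\rho(x)|$, with $|u_\rho(x)|\leq|u_R(x)|+C\rho^{(\lambda-d)/p}O(x)$ for $R=\diam\Omega$ and $|u_R|\leq C\norm{u}{L^p}$, I get $E^{p,\lambda}\leq C(O^{p,\lambda}+\norm{u}{L^p})$. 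This is item 3.
\item When $\lambda>d$ the sum converges. Hence $u_\rho(x)\to u(x)$ at Lebesgue points, with $|u(x)-u_r(x)|\leq C r^{(\lambda-d)/p}O(x)$, so $|u(x)|\leq C(O(x)+\norm{u}{L^p})$. By item 4 of Proposition \ref{prop:injmorrey} (with $\theta$-smoothness), $L^{p,d,q}$ is controlled by $L^q$ once $q>p$. For $q\leq p$ I expect instead to bound $E^{p,d}$ directly by $\sup_r r^{-d/p}(\norm{u-u_r}{L^p}+|B_r|^{1/p}|u_r|)\leq C(O(x)+\sup_r|u_r(x)|)$, and I use $|u_r(x)|\leq |u(x)|+Cr^{(\lambda-d)/p}O(x)$. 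This is item 5.
\item Item 8 is the classical Campanato theorem. Given the pointwise bound $|u(x)-u_r(x)|\lesssim r^\alpha O(x)$ with $O\in L^\infty$, I take $|x-y|=r$, compare $u_{2r}(x)$ with $u_{2r}(y)$ on the common ball, and conclude. The converse is immediate from $|u-u_r(x)|\leq 2[u]_\alpha r^\alpha$ on $B_\Omega(x,r)$.
\item For item 7, the direction $L^q\inj L_C^{p,d,q}$ follows from item 1 together with Proposition \ref{prop:injmorrey} point 4. The converse uses the sharp maximal function: since $O^{p,d}\geq c\,M^\#u$, the Fefferman–Stein inequality on $\theta$-smooth bounded domains, plus the $L^p$ term, gives $\norm{u}{L^q}\lesssim \norm{M^\# u}{L^q}+\norm{u}{L^p}$. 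I expect to cite this rather than reprove it.
\end{itemize}

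Items 4 and 6 are the fractional Sobolev links. For item 6, I apply Hölder to get
\[
r^{-d-\alpha p}\int_{B_r}|u-u_r(x)|^p\leq r^{-\alpha p}\Big(\fint_{B_r}\fint_{B_r}|u(y)-u(z)|^q\Big)^{p/q}.
\]
I then need to dominate the supremum over $r$ by an $L^q$ function. I plan to bound the double average by $\fint_{B_r}|u(y)-u(x)|^q+|u(z)-u(x)|^q$, so that the quantity is $\lesssim \big(\sup_r r^{-\alpha q}\fint_{B_r(x)}|u(y)-u(x)|^q\,dy\big)^{1/q}$. Showing this maximal quantity is in $L^q$ with norm $\lesssim [u]_{W^{\alpha,q}}$ is the main obstacle. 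The naive bound, $\fint\leq r^{\alpha q}\int |u(y)-u(x)|^q|x-y|^{-d-\alpha q}dy$ up to constants, works directly and gives exactly the Gagliardo integrand integrated in $x$. So this step closes.

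For item 4, I use the pointwise bound from the $\lambda>d$ case, $|u(x)-u(y)|\lesssim |x-y|^\alpha (O(x)+O(y))$, which is a Hajłasz-type condition with $O\in L^q$. I then estimate
\[
\iint \frac{|u(x)-u(y)|^q}{|x-y|^{d+\alpha' q}}\lesssim \int O(x)^q\int_\Omega |x-y|^{(\alpha-\alpha')q-d}\,dy\,dx<\infty,
\]
the inner integral being finite because $\alpha'<\alpha$ and $\Omega$ is bounded. The $L^q$ norm of $u$ comes from item 5. The delicate point throughout is obtaining the two-point bound $|u(x)-u(y)|\lesssim|x-y|^\alpha(O(x)+O(y))$ near the boundary, and that is where I rely on the appendix's local-mean lemmas and $\theta$-smoothness.
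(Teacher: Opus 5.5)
Your treatment of items 1--5, 7 and 8 matches the paper's proof: pointwise comparison of $O_\Omega^{p,\lambda}$ with $E_\Omega^{p,\lambda}$, H\"older at each scale, the dyadic local-mean bounds for $\lambda<d$ and $\lambda>d$, the two-point bound integrated against $|x-y|^{(\alpha-\alpha')q-d}$, and Fefferman--Stein. The gap is in item 6, which is stated for an arbitrary bounded $\Omega$ without $\theta$-smoothness. Your reduction
\[
O_\Omega^{p,d+\alpha p}[u](x)\lesssim \sup_{r>0} r^{-\alpha}\Big(\fint_{B_\Omega(x,r)}|u(y)-u(x)|^q\,dy\Big)^{1/q}
\]
is correct, since it uses only the upper bound $|B_\Omega(x,r)|\le v_d r^d$ and $p\le q$. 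The subsequent ``naive bound'' is not. From $|x-y|<r$ you only get
\[
\fint_{B_\Omega(x,r)}|u(y)-u(x)|^q\,dy\le \frac{r^{d}}{|B_\Omega(x,r)|}\,r^{\alpha q}\int_{B_\Omega(x,r)}\frac{|u(y)-u(x)|^q}{|x-y|^{d+\alpha q}}\,dy ,
\]
so you need the lower bound $|B_\Omega(x,r)|\gtrsim r^d$. That is exactly $\theta$-smoothness, and even then it holds only for $r<\diam\Omega$; larger radii need a separate monotonicity remark. On a domain with an outward cusp, $r^d/|B_\Omega(x,r)|$ is unbounded near the tip, so this step is not available in the stated generality.

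The problem comes from your normalization. You used $|B_\Omega(x,r)|\le v_dr^d$ to pass to averages and then needed the reverse inequality to leave them. Keep the integrals unnormalized, as the paper does. Take $c=u(x)$ in $\wt O_\Omega^{p,d+\alpha p}$, which also removes the double-average detour, and apply H\"older with $p\le q$. With $G(x,y):=|u(x)-u(y)|\,|x-y|^{-\alpha-d/q}$,
\[
\norm{u-u(x)}{L^p(B_\Omega(x,r))}\le |B_\Omega(x,r)|^{\frac1p-\frac1q}\,r^{\alpha+\frac dq}\norm{G(x,\cdot)}{L^q(\Omega)}\le v_d^{\frac1p-\frac1q}\,r^{\alpha+\frac dp}\norm{G(x,\cdot)}{L^q(\Omega)} .
\]
This holds for every $r>0$ and uses only the upper volume bound. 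Hence $\wt O_\Omega^{p,d+\alpha p}[u](x)\le C\norm{G(x,\cdot)}{L^q(\Omega)}$, whose $L^q$ norm in $x$ is the Gagliardo seminorm. Together with $\norm{u}{L^p(\Omega)}\le|\Omega|^{1/p-1/q}\norm{u}{L^q(\Omega)}$, this proves item 6 in full generality.
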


\begin{proof} 1. It directly comes from $\wt O_\Omega^{p,\lambda}[u]\leq E_\Omega^{p,\lambda}[u]$. \smallskip

\noindent 2. Take $u\in L_C^{p',\lambda',q'}(\Omega)$. First remark that $u\in L^{p'}(\Omega)\inj L^p(\Omega)$ as $p'\geq p$.  Call $B=B_\Omega(x,r)$, and $s:=(1/p-1/p')^{-1}$ if $p'>p$ or $s=+\infty$ if $p'=p$, for  $r\leq r_0:=\diam(\Omega)$ we have 
\begin{equation}\nonumber
\begin{aligned}
\norm{u-u_r(x)}{L^{p}(B)} &\leq \norm{u-u_r(x)}{L^{p'}(B)}v_d^{1/{s}}r^{d/{s}}\\
r^{-\lambda/p}\norm{u-u_r(x)}{L^{p}(B)} &\leq v_d^{1/s} r^{d/s-\lambda/p}\norm{u-u_r(x)}{L^{p'}(B)}\leq v_d^{1/s} r^\delta r^{-\lambda'/p'}\norm{u-u_r(x)}{L^{p'}(B)}\\
&\leq v_d^{1/s} r_0^\delta O_\Omega^{p',\lambda'}[u](x)
\end{aligned}
\end{equation}
where $\delta = (\lambda'-d)/p'- (\lambda-d)/p\geq 0$. For $r> r_0$ we have 
\begin{equation}\nonumber
\begin{aligned}
r^{-\lambda/p}\norm{u-u_r(x)}{L^{p}(B)} &\leq r_0^{-\lambda/p}\norm{u-u_\Omega}{L^{p}(\Omega)}\\
&\leq 2r_0^{-\lambda/p}\norm{u}{L^{p}(\Omega)}\leq C_1\norm{u}{L^{p'}(\Omega)}.  
\end{aligned}
\end{equation}
Taking the supremum over $r>0$, then the $L^q$-norm over $\Omega$, and using the embedding
$L^{q'}(\Omega)\inj L^q(\Omega)$, we obtain
\[
\norm{u}{L_C^{p,\lambda,q}(\Omega)}
\leq C_3\norm{u}{L_C^{p',\lambda',q'}(\Omega)}.
\]

\noindent 3. Take $u\in L_C^{p,\lambda,q}(\Omega)$ and call $B:=B_\Omega(x,r)$. For any $r\geq r_0:=\diam(\Omega)$, we have 
\begin{equation}\nonumber
r^{-\lambda/p}\norm{u}{L^p(B)}\leq r_0^{-\lambda/p}\norm{u}{L^p(\Omega)}. 
\end{equation}
For $r\leq r_0$, 
\begin{equation}\nonumber
\begin{aligned}
\norm{u}{L^p(B)} &\leq \norm{u-u_r(x)}{L^p(B)} + \norm{u_r(x)}{L^p(B)}\\
&\leq r^{\lambda/p}O_\Omega^{p,\lambda}[u](x) + |B|^{1/p}|u_r(x)|\\
&\leq r^{\lambda/p}O_\Omega^{p,\lambda}[u](x) + v_d^{1/p}r^{d/p}|u_r(x)|\\
r^{-\lambda/p}\norm{u}{L^p(B)} &\leq O_\Omega^{p,\lambda}[u](x) + v_d^{1/p}r^{(d-\lambda)/p}|u_r(x)|\\
\end{aligned}
\end{equation}
Using Proposition \ref{prop:bound} we have 
\begin{equation}\nonumber
|u_r(x)|\leq \frac{1}{|\Omega|^{1/p}}\norm{u}{L^p(\Omega)} + C_4r^{(\lambda-d)/p}O_\Omega^{p,\lambda}[u](x)
\end{equation}
which leads to
\begin{equation}\nonumber
r^{-\lambda/p}\norm{u}{L^p(B)} \leq (1+v_d^{1/p}C_4)O_\Omega^{p,\lambda}[u](x) + \frac{v_d^{1/p}r_0^{(d-\lambda)/p}}{|\Omega|^{1/p}}\norm{u}{L^p(\Omega)},
\end{equation}
Combining the estimates for $r\le r_0$ and $r>r_0$, we obtain
\[
E_\Omega^{p,\lambda}[u](x)
\le
C\left(
O_\Omega^{p,\lambda}[u](x)
+
\|u\|_{L^p(\Omega)}
\right).
\]
Now take the $L^q$-norm to conclude.\smallskip

\noindent 4. From point 2, we have $L_C^{p,d+\alpha p,q}(\Omega)\inj L_C^{1,d+\alpha,q}(\Omega)$. Apply Corollary \ref{cor:39} to write 
\begin{equation}\begin{aligned}\label{eq:holder1}
|u(x)-u(y)| \leq C_6\left(O_\Omega^{1,d+\alpha}[u](x)+O_\Omega^{1,d+\alpha}[u](y)\right)|x-y|^{\alpha},\quad \fae\ (x,y)\in\Omega^2.
\end{aligned}\end{equation}
Call $G:=O_\Omega^{1,d+\alpha}[u]\in L^q(\Omega)$, we have for $\alpha'<\alpha$
\begin{equation}\nonumber
\begin{aligned}
\frac{|u(x)-u(y)|^q}{|x-y|^{d+q\alpha'}}\leq C\left(\frac{G(x)^q}{|x-y|^{d-q(\alpha-\alpha')}}+\frac{G(y)^q}{|x-y|^{d-q(\alpha-\alpha')}}\right). 
\end{aligned}
\end{equation}
We now integrate in $x$ and $y$ on $\Omega$ (the left-hand term is integrable as $\alpha-\alpha'>0$) to get that $u\in W^{\alpha',q}(\Omega)$. \smallskip

\noindent 5. Take $u\in L_C^{p,\lambda,q}(\Omega)$ and call $B:=B_\Omega(x,r)$. For any $r\leq r_0:=\diam(\Omega)$, we have 
\begin{equation}\nonumber
\begin{aligned}
\norm{u}{L^p(B)} &\leq \norm{u-u_r(x)}{L^p(B)} + \norm{u_r(x)}{L^p(B)}\\
&\leq r^{\lambda/p}O_\Omega^{p,\lambda}[u](x) + v_d^{1/p}r^{d/p}|u_r(x)|\\
r^{-d/p}\norm{u}{L^p(B)} &\leq r_0^{(\lambda-d)/p}O_\Omega^{p,\lambda}[u](x) + v_d^{1/p}|u_r(x)|.\\
\end{aligned}
\end{equation}
From the Proposition \ref{prop:36} we know that $u_r(x)$ is bounded by 
\begin{equation}\nonumber
|u_r(x)|\leq |u(x)|+ |u_{r}(x)-u(x)|\leq |u(x)|+Cr^{(\lambda-d)/p} O_\Omega^{p,\lambda}[u](x),\quad\fae x\in\Omega.
\end{equation}
Hence 
\begin{equation}\nonumber
\begin{aligned}
r^{-d/p}\norm{u}{L^p(B)} &\leq C\left(r_0^{(\lambda-d)/p}O_\Omega^{p,\lambda}[u](x) + |u(x)|\right).\\
\end{aligned}
\end{equation}
For $r> r_0$, $r^{-d/p}\norm{u}{L^p(B)}\leq r_0^{-d/p}\norm{u}{L^p(\Omega)}$. Finally, taking the supremum on $r>0$, we get 
\begin{equation}\nonumber
E_\Omega^{p,d}[u]\leq \tilde C(\norm{u}{L^p(\Omega)}+ O_\Omega^{p,\lambda}[u] + |u(x)|),\quad\fae\ x\in\Omega. 
\end{equation}
Taking now the $L^q$-norm of the previous line ($u\in L^q(\Omega)$ from  point 4), we get that $u\in L^{p,d,q}(\Omega)\sim L^q(\Omega)$.\smallskip

\noindent 6. Take $u\in W^{\alpha,q}(\Omega)$, then $u\in L^q(\Omega)$ and 
\begin{equation}\nonumber
G:(x,y)\mapsto \frac{|u(x)-u(y)|}{|x-y|^{\alpha+d/q}}\in L^q(\Omega^2). 
\end{equation}
In $B:=B_\Omega(x,r)$ with $r>0$, we take the $L^p$-norm with respect to $y$ and use Hölder's inequality:
\begin{equation}\nonumber
\begin{aligned}
\norm{u-u(x)}{L^p(B)}
&\leq \norm{G(x,\cdot)}{L^q(B)} |B|^{1/p-1/q} r^{\alpha+d/q} \\
&\leq C \norm{G(x,\cdot)}{L^q(B)} r^{\alpha+d/p}
\end{aligned}
\end{equation}
and then
\begin{equation}\nonumber
\wt O_\Omega^{p,d+\alpha p}[u](x) \leq C\norm{G(x,\cdot)}{L^q(\Omega)},
\end{equation}
where $C$ is a positive constant depending only on $d,p$ and $q$. We conclude by taking the $L^q$-norm over $\Omega$. \smallskip

\noindent 7. From Proposition \ref{prop:injmorrey} point 4 and point 1 in the present proposition, we have
\begin{equation}\nonumber
L^q(\Omega)\sim L^{p,d,q}(\Omega)\inj L_C^{p,d,q}(\Omega).
\end{equation}
Reciprocally, take $u\in L_C^{p,d,q}(\Omega)$, and define the Fefferman--Stein sharp maximal function by
\begin{equation}\nonumber
u^\#(x):=\sup_{r>0}\fint_{B_\Omega(x,r)}
|u(y)-u_r(x)|\,dy,
\quad \forall x\in\Omega.
\end{equation}
The Fefferman--Stein inequality (see for instance \cite{fefferman1972h} or \cite{lerner2010some}) gives for $q\in(1,+\infty)$,
\begin{equation}\nonumber
\norm{u-u_\Omega}{L^q(\Omega)}
\leq C\norms{u^\#}{L^q(\Omega)}
\end{equation}
as soon as $\Omega$ is $\theta$--smooth. Now, by H\"older's inequality, we have
\begin{equation}\nonumber
u^\#(x)\leq O_\Omega^{p,d}[u](x),
\quad \forall x\in\Omega,
\end{equation}
and so
\begin{equation}\nonumber
\norm{u-u_\Omega}{L^q(\Omega)}
\leq C\norms{O_\Omega^{p,d}[u]}{L^q(\Omega)}.
\end{equation}
From that, we write
\begin{equation}\nonumber
\begin{aligned}
\norm{u}{L^q(\Omega)}
&\leq \norm{u-u_\Omega}{L^q(\Omega)}
+\norm{u_\Omega}{L^q(\Omega)} \\
&\leq C\norms{O_\Omega^{p,d}[u]}{L^q(\Omega)}
+|\Omega|^{1/q}|u_\Omega| \\
&\leq C\norms{O_\Omega^{p,d}[u]}{L^q(\Omega)}
+|\Omega|^{1/q-1/p}\norm{u}{L^p(\Omega)} \\
&\leq \wt C(d,p,q,\Omega) \norm{u}{L_C^{p,d,q}(\Omega)}.
\end{aligned}
\end{equation}
This proves the reciprocal injection. \smallskip

\noindent 8. This is the classical Campanato--Hölder equivalence.

\end{proof}

\begin{remark}
The parameter $\lambda=d$ plays a critical role in the generalized Morrey--Campanato scale. If $\lambda\leq d$ and $\Omega$ is $\theta$--smooth, then generalized Morrey and Campanato spaces coincide:
\[
L_C^{p,\lambda,q}(\Omega)\sim L^{p,\lambda,q}(\Omega).
\]
At the critical value $\lambda=d$, both scales are related to Lebesgue spaces. More precisely, for $p<q<+\infty$,
\[
L^{p,d,q}(\Omega)\sim L_C^{p,d,q}(\Omega)\sim L^q(\Omega).
\]
For $\lambda:=>d$, generalized Campanato spaces become strictly larger than the generalized Morrey spaces and are connected to fractional Sobolev and Hölder spaces. The generalized Campanato spaces provide a continuous framework interpolating between Morrey--type, Lebesgue--type and Hölder--type regularity.
\end{remark}

\section{Gradient estimates in generalized Morrey--Campanato spaces}

This section is devoted to the main results of the paper. We first derive several consequences of assumption $(H_2)$ and discuss its relation with local oscillation estimates. We then establish gradient estimates for weak solutions of \eqref{eq:main} in generalized Morrey and Campanato spaces.

\subsection{Properties of the elliptic coefficient  $a$}\label{sub:H2}

Let $a\in L^\infty(\Omega)$ satisfy assumptions $(H_1)$ and $(H_2)$, which we recall below:
\medskip

$(H_1):\ a\in L^\infty(\Omega)$ and $a\geq \ul a>0$ almost everywhere in $\Omega$ for some constant $\ul a$.  
\medskip

$(H_2):$ $\exists g\in  L^q(\Omega)$ with $q\in [1,+\infty]$ and $\alpha\in (0,1]$ such that
\begin{equation}\nonumber
 |a(x)-a(y)|\leq g(x)|x-y|^\alpha \quad \fae\ (x,y)\in \Omega^2.
\end{equation}

The assumption $(H_2)$ is rather unusual in the literature. Nevertheless, it admits a natural interpretation in terms of local oscillations. This observation will play a crucial role in the sequel.

\begin{definition} For any $\alpha\in(0,1]$, the $\alpha$--oscillation of a function $a\in L^\infty(\Omega)$ is defined as
\begin{equation}\nonumber
\omega_{\alpha}[a](x):=\sup_{r>0}r^{-\alpha}\norm{a-a_r(x)}{L^\infty(B_\Omega(x,r))},\quad \forall x\in\Omega.  
\end{equation}
\end{definition}

The next result establishes an equivalence between $(H_2)$ and the fact that $\omega_{\alpha}[a]$ belongs to $L^q(\Omega)$.

\begin{proposition}\label{prop:1} The coefficient $a$ satisfies $(H_2)$ with exponents $\alpha$ and $q$
if and only if
\[
\omega_\alpha[a]\in L^q(\Omega).
\]
Moreover, in this case, $\omega_\alpha[a]\le 2g$ almost everywhere in $\Omega$. 
\end{proposition}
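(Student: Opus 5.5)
We prove the two implications separately. The forward direction yields the pointwise bound $\omega_\alpha[a]\le 2g$ almost everywhere. The converse produces a function $g$ from $\omega_\alpha[a]$ itself, with $g=C\,\omega_\alpha[a]$ for a constant $C$ depending only on $\alpha$.

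\emph{($H_2$) implies $\omega_\alpha[a]\in L^q$.} Let $N\subset\Omega$ be a null set such that, for every $x\notin N$, the inequality $|a(x)-a(y)|\le g(x)|x-y|^\alpha$ holds for a.e.\ $y\in\Omega$. Such an $N$ exists by Fubini applied to the exceptional null set in $\Omega^2$. Fix $x\notin N$ and $r>0$. For a.e.\ $y\in B_\Omega(x,r)$ we have $|a(y)-a(x)|\le g(x)r^\alpha$. Averaging over $B_\Omega(x,r)$ gives $|a_r(x)-a(x)|\le g(x)r^\alpha$. By the triangle inequality, $\norm{a-a_r(x)}{L^\infty(B_\Omega(x,r))}\le 2g(x)r^\alpha$. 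Taking the supremum over $r$ yields $\omega_\alpha[a](x)\le 2g(x)$ a.e., so $\omega_\alpha[a]\in L^q(\Omega)$.

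\emph{$\omega_\alpha[a]\in L^q$ implies ($H_2$).} Set $\omega:=\omega_\alpha[a]$. The first step is to control $|a(x)-a_r(x)|$ at Lebesgue points $x$ of $a$, exactly as in the Campanato--Hölder argument of Proposition \ref{prop:36}. Nested balls are compared through
\begin{equation}\nonumber
|a_{r/2}(x)-a_r(x)|\le \fint_{B_\Omega(x,r/2)}|a-a_r(x)|\le \omega(x)r^\alpha .
\end{equation}
Note that the $L^\infty$ control on the larger ball dominates the average on the smaller one, so no $\theta$-smoothness is needed. Summing the dyadic series gives $|a_{2^{-k}r}(x)-a_r(x)|\le C_\alpha\,\omega(x)r^\alpha$ for all $k$. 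Letting $k\to\infty$ at a Lebesgue point gives $|a(x)-a_r(x)|\le C_\alpha\,\omega(x)r^\alpha$.

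Next fix such an $x$ and a.e.\ $y\in\Omega$, and set $r:=|x-y|$. Pick $r'>r$, for instance $r'=2r$, so that $y\in B_\Omega(x,r')$. Off a null set of $y$ (for fixed $r'$) we have $|a(y)-a_{r'}(x)|\le\omega(x)r'^\alpha$. To avoid dependence of the exceptional set on $r$, take $r'$ in a countable family, say $r'=2^{j}$ for $j\in\mathbb{Z}$, and choose the $j$ with $2^{j-1}\le |x-y|<2^j$. Then
\begin{equation}\nonumber
|a(x)-a(y)|\le |a(x)-a_{2^j}(x)|+|a_{2^j}(x)-a(y)|\le (C_\alpha+1)\,\omega(x)\,2^{j\alpha}\le C\,\omega(x)|x-y|^\alpha .
\end{equation}
Hence ($H_2$) holds with $g:=C\,\omega\in L^q(\Omega)$.

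The main obstacle is measure-theoretic bookkeeping rather than estimation. The exceptional null sets must not depend on $r$ (handled by restricting to the countable family $r=2^j$), and the final inequality must hold for a.e.\ pair $(x,y)\in\Omega^2$. For the latter, fix $x$ in the full-measure set of Lebesgue points with $\omega(x)<\infty$, note the bound holds for a.e.\ $y$, and conclude by Fubini. Measurability of $\omega$ follows because the supremum may likewise be restricted to rational $r$: $r\mapsto a_r(x)$ is continuous, and the $L^\infty$ norm is monotone-compatible up to small enlargements of the radius.
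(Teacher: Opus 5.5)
Your proof is correct and follows the paper's argument (Appendix~\ref{app:1}). The forward direction is identical. In the converse, the paper likewise first bounds $|a(x)-a_r(x)|$ at Lebesgue points, then compares $a(y)$ with $a_r(x)$ for $r$ in a countable family: rational radii with $r/2\le|x-y|<r$, where you use dyadic ones. The only cosmetic difference is that the paper skips your dyadic summation, comparing $a_\rho(x)$ with $a_r(x)$ directly for any $\rho<r$ and letting $\rho\to0$.
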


The proof is postponed to Appendix~\ref{app:1}.

\begin{proposition}\label{prop:H2impSobolev} Let $a\in L^\infty(\Omega)$, then 
\begin{equation}\nonumber
a\ \text{satisfies}\ (H_2)\quad\imp\quad a\in W^{\alpha',q}(\Omega),\quad\forall \alpha'<\alpha. 
\end{equation}
In particular, if $\alpha q>d$ and $\Omega$ is $\theta$--smooth, then $a\in\cC^{0,\alpha''}\big(\ol\Omega\big)$ for all $0<\alpha''< \alpha-d/q$. 
\end{proposition}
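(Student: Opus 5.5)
The plan is to go through the generalized Campanato space $L_C^{\infty}$-type bound supplied by $(H_2)$ and then reuse the embedding of Proposition~\ref{prop:injcamp}. Alternatively, one can argue directly with the fractional Sobolev seminorm, as in the proof of point 4 of Proposition~\ref{prop:injcamp}. The direct route is the most economical and does not need any smoothness of $\Omega$ for the first claim, so I will follow it.

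\textbf{Step 1: the first claim.} Fix $\alpha'<\alpha$. Since $a\in L^\infty(\Omega)$ and $\Omega$ is bounded, $a\in L^q(\Omega)$. For the Gagliardo seminorm, use $(H_2)$ directly:
\begin{equation}\nonumber
\int_\Omega\int_\Omega \frac{|a(x)-a(y)|^q}{|x-y|^{d+\alpha' q}}\,\td y\,\td x \le \int_\Omega g(x)^q\int_\Omega |x-y|^{-d+(\alpha-\alpha')q}\,\td y\,\td x .
\end{equation}
The inner integral is bounded by $C(d)\diam(\Omega)^{(\alpha-\alpha')q}/((\alpha-\alpha')q)$, uniformly in $x$, since $(\alpha-\alpha')q>0$. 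Hence the double integral is at most $C\norm{g}{L^q(\Omega)}^q$, and $a\in W^{\alpha',q}(\Omega)$.

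For $q=+\infty$ the statement is read as the Hölder class. In that case $(H_2)$ with bounded $g$ is just $a\in\cC^{0,\alpha}$, and the claim is immediate; it also holds in every $W^{\alpha',s}$ with $s<\infty$ by the same computation. Note that the non-symmetric form of $(H_2)$ is harmless here: only $g(x)$ appears, and integrating in $y$ first is allowed.

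\textbf{Step 2: the Hölder claim when $\alpha q>d$.} Given $0<\alpha''<\alpha-d/q$, choose $\alpha'\in(\alpha''+d/q,\alpha)$, so that $\alpha'q>d$ and $a\in W^{\alpha',q}(\Omega)$ by Step 1. The task is then the fractional Morrey embedding $W^{\alpha',q}(\Omega)\inj\cC^{0,\alpha'-d/q}(\ol\Omega)$, which I will obtain inside the paper's framework.

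By Proposition~\ref{prop:injcamp} point 6 with $p=1\le q$, $a\in L_C^{1,d+\alpha',q}(\Omega)$. Then I would apply the Campanato-type machinery of the appendix (Corollary~\ref{cor:39}, valid for $\theta$-smooth $\Omega$) to get
\begin{equation}\nonumber
|a(x)-a(y)|\le C\,|x-y|^{\alpha'}\bigl(G(x)+G(y)\bigr),\qquad G:=O_\Omega^{1,d+\alpha'}[a]\in L^q(\Omega).
\end{equation}
This pointwise bound does not directly give Hölder continuity, since $G$ is only in $L^q$. So I would instead bound the oscillation on balls. Apply Hölder's inequality in $\int_{B_\Omega(x,r)}|a-a_r(x)|$ with the $L^q$ Gagliardo integrand, exactly as in the proof of point 6. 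This gives
\begin{equation}\nonumber
r^{-d}\int_{B_\Omega(x,r)}|a-a_r(x)|\le C r^{\alpha'-d/q}\,[a]_{W^{\alpha',q}},
\end{equation}
uniformly in $x$, where the $\theta$-smoothness is used to compare $|B_\Omega(x,r)|$ with $r^d$. Hence $a\in L_C^{1,d+(\alpha'-d/q),\infty}(\Omega)$. Point 8 of Proposition~\ref{prop:injcamp} (the Campanato--Hölder equivalence, with $p=1$) then yields $a\in\cC^{0,\alpha'-d/q}(\ol\Omega)\subset\cC^{0,\alpha''}(\ol\Omega)$.

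\textbf{Main obstacle.} The delicate point is this uniform bound. It requires integrating the Gagliardo integrand over $B_\Omega(x,r)^2$ rather than fixing $x$, that is, using a Campanato average over both variables. The estimate then reads
\begin{equation}\nonumber
\fint_B|a-a_B|\le \fint_B\fint_B|a(z)-a(y)|\,\td z\,\td y\le C\, r^{-2d/q}\, r^{\alpha'+d/q}\,\bigl\|G\bigr\|_{L^q(B\times B)},
\end{equation}
with $G(z,y)=|a(z)-a(y)|/|z-y|^{\alpha'+d/q}$. This gives the exponent $\alpha'-d/q$ with constant $[a]_{W^{\alpha',q}}$, and so closes the argument.
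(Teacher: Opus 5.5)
Your proof is correct. Step 1 is exactly the paper's argument: integrate $g(x)^q|x-y|^{-d+(\alpha-\alpha')q}$ in $y$ first, then in $x$, and treat $q=+\infty$ directly. For the H\"older claim you take a different route. The paper simply invokes the fractional Sobolev (Morrey) embedding $W^{\alpha',q}(\Omega)\inj\cC^{0,\alpha''}(\ol\Omega)$ for $\alpha''<\alpha'-d/q$. You instead reprove that embedding inside the paper's own framework:
\begin{itemize}
\item averaging the Gagliardo integrand over $B_\Omega(x,r)\times B_\Omega(x,r)$ and using $|B_\Omega(x,r)|\geq\theta v_d r^d$ gives $\fint_B|a-a_B|\le C r^{\alpha'-d/q}[a]_{W^{\alpha',q}}$ uniformly in $x$, for $r<\diam\Omega$ (larger $r$ are trivial);
\item hence $a\in L_C^{1,d+(\alpha'-d/q),\infty}(\Omega)$;
\item point 8 of Proposition~\ref{prop:injcamp} then concludes.
\end{itemize}
The paper's route is a one-liner but hands the work to an external embedding theorem. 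Yours only uses the lower volume bound on $B_\Omega(x,r)$, which is exactly the $\theta$-smoothness hypothesis in the statement.

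When you write it up, go straight to the double average. Drop the detour through Corollary~\ref{cor:39} and the phrase ``exactly as in point 6''. Fixing $x$ as in point 6 only yields $\norm{G(x,\cdot)}{L^q(\Omega)}$, which is not uniform in $x$, as your ``main obstacle'' paragraph itself recognizes.

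Note also that the same double-average trick applied directly to $(H_2)$ bypasses $W^{\alpha',q}$ altogether:
\[
\fint_B\fint_B|a(z)-a(y)|\,\td z\,\td y\le (2r)^\alpha\fint_B g\le C\,r^{\alpha-d/q}\norm{g}{L^q(\Omega)} .
\]
For $q<+\infty$ we have $\alpha-d/q\in(0,1)$, so this gives $a\in L_C^{1,d+\alpha-d/q,\infty}(\Omega)$. Hence $a$ is H\"older even with the endpoint exponent $\alpha''=\alpha-d/q$, which the statement excludes.
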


\begin{proof} For $q<+\infty$, we write
\begin{equation}\nonumber
\begin{aligned}
\frac{|a(x)-a(y)|^q}{|x-y|^{d+\alpha'q}}\leq g^q(x)\frac{1}{|x-y|^{d-(\alpha-\alpha')q}}.
\end{aligned}
\end{equation}
We first integrate this inequality on the variable $y\in\Omega$ by Fubini to get
\begin{equation}\nonumber
\begin{aligned}
\int_{\Omega}\frac{|a(x)-a(y)|^q}{|x-y|^{d+\alpha'q}} \td y\leq g^q(x) \int_{\Omega}\frac{\td y}{|x-y|^{d-(\alpha-\alpha')q}}\leq  g^q(x) \int_{0}^{\diam \Omega}\frac{\td r}{r^{1-(\alpha-\alpha')q}}\leq  Cg^q(x),
\end{aligned}
\end{equation}
where $C$ depends only on $d,\alpha,\alpha'$ and $\Omega$. We now integrate on $x\in\Omega$ to conclude that  $a\in W^{\alpha',q}(\Omega)$. 

 The second statement is due to the Sobolev embedding $W^{\alpha',q}(\Omega)\inj \cC^{0,\alpha''}\big(\ol\Omega\big)$ with $\alpha''< \alpha'-d/q$. The case $q=+\infty$ is direct. 
\end{proof}

\begin{proposition}\label{prop:H2impH2} If $a\in L^\infty(\Omega)$ satisfies $(H_2)$ with exponents $q\in [1,+\infty)$ and $\alpha\in (0,1]$, then it also satisfies $(H_2)$ with exponents $q'\geq q$ and $\alpha'\in (0,1]$ such that $\alpha'q'\leq \alpha q$. 

\end{proposition}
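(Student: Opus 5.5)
We build a new majorant $\tilde g$ from $g$ and $\|a\|_{L^\infty}$ by interpolating between the pointwise bound $(H_2)$ and the trivial bound $|a(x)-a(y)|\le 2\norm{a}{L^\infty(\Omega)}$. Set $\theta:=q/q'\in(0,1]$ and $M:=2\norm{a}{L^\infty(\Omega)}$. For a.e. $(x,y)\in\Omega^2$ we write
\begin{equation}\nonumber
|a(x)-a(y)| = |a(x)-a(y)|^{\theta}\,|a(x)-a(y)|^{1-\theta}\leq g(x)^{\theta}|x-y|^{\alpha\theta}\,M^{1-\theta}.
\end{equation}
Then $\tilde g:=M^{1-\theta}g^{\theta}$ satisfies $\int_\Omega \tilde g^{q'}=M^{(1-\theta)q'}\int_\Omega g^{q}<+\infty$, so $\tilde g\in L^{q'}(\Omega)$. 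This gives $(H_2)$ with exponents $q'$ and $\alpha\theta=\alpha q/q'$. When $q'=+\infty$, $\theta=0$ and the bound is just $|a(x)-a(y)|\le M$; the resulting exponent $0$ is not admissible, but in that case the hypothesis $\alpha'q'\le\alpha q$ forces $\alpha'=0$, which is excluded. So we may assume $q'<+\infty$ throughout.

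Next we lower the Hölder exponent from $\alpha q/q'$ to any $\alpha'\le \alpha q/q'$, using boundedness of $\Omega$. Since $|x-y|\le\diam\Omega$, we have $|x-y|^{\alpha q/q'}\le (\diam\Omega)^{\alpha q/q'-\alpha'}|x-y|^{\alpha'}$. Hence
\begin{equation}\nonumber
|a(x)-a(y)|\le (\diam\Omega)^{\alpha q/q'-\alpha'}M^{1-q/q'}g(x)^{q/q'}\,|x-y|^{\alpha'},
\end{equation}
and the prefactor is still in $L^{q'}(\Omega)$. Finally, $\alpha'\in(0,1]$ holds by assumption.

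There is no serious obstacle here. The only points requiring care are the exceptional null set and the edge case $q'=+\infty$. For the null set, the interpolation is done pointwise on the full-measure set where $(H_2)$ holds and $|a|\le\norm{a}{L^\infty}$ at both points; the latter is a product of full-measure sets in $\Omega^2$. The edge case $q'=+\infty$ was dispatched above. Alternatively, one could argue via Proposition~\ref{prop:1}, bounding $\omega_{\alpha'}[a]\le C\,\omega_\alpha[a]^{q/q'}\norm{a}{L^\infty}^{1-q/q'}$ by the same interpolation on each ball. The direct pointwise route above is simpler, so we would use it.
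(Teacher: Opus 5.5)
Your proof is correct and takes the same route as the paper. You split $|a(x)-a(y)|$ with weight $q/q'$ between the $(H_2)$ bound and the trivial bound $2\|a\|_{L^\infty(\Omega)}$, then lower the exponent to $\alpha'$ via $|x-y|\le\diam\Omega$, which gives the majorant $C\,g^{q/q'}\in L^{q'}(\Omega)$; your observation that the case $q'=+\infty$ is vacuous because $\alpha'>0$ is a correct detail the paper leaves implicit.
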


\begin{proof} 
\begin{equation}\nonumber
\begin{aligned}
 |a(x)-a(y)| &\leq  |a(x)-a(y)|^{1-q/q'}|a(x)-a(y)|^{q/q'}\\
 &\leq (2 \norm{a}{L^\infty(\Omega)})^{1-q/q'}g^{q/q'}(x)|x-y|^{\alpha q/q'}\\
 &\leq(2 \norm{a}{L^\infty(\Omega)})^{1-q/q'}g^{q/q'}(x)|x-y|^{\alpha q/q'-\alpha'}|x-y|^{\alpha'}\\
  &\leq (2 \norm{a}{L^\infty(\Omega)})^{1-q/q'}(\diam \Omega)^{\alpha q/q'-\alpha'} g^{q/q'}(x) |x-y|^{\alpha'}\\
  &\leq C g^{q/q'}(x) |x-y|^{\alpha'}\\
\end{aligned}
\end{equation}
 where $C:= (2 \norm{a}{L^\infty(\Omega)})^{1-q/q'}(\diam \Omega)^{\alpha q/q'-\alpha'}$ and $g^{q/q'}\in L^{q'}(\Omega)$. 

\end{proof}

\subsection{Estimate in generalized Morrey spaces}

Let $\Omega'\subset \Omega$ be an open subdomain of $\Omega$. We write
$\Omega'\Subset \Omega$ whenever
$\dist(\Omega',\partial\Omega)>0$. This first gradient estimate concerns the generalized Morrey regularity of
$\nabla u$. Its proof follows the classical Campanato--Morrey approach for
elliptic equations. The main novelty is that the uniform Hölder seminorm of
the coefficient is replaced by the pointwise oscillation function
$\omega_\alpha[a](x)$, whose integrability is controlled by $g\in L^q(\Omega)$. The final
step also differs from the classical argument: instead of relying on a
uniform bound with respect to $x$, we exploit an $L^{q'}$--integrability
estimate for a suitable exponent $q'$.

\begin{theorem}\label{theo:1} Let $u\in H^1(\Omega)$ be a weak solution of \eqref{eq:main} with the coefficient $a$ satisfying the hypotheses $(H_1)$ and $(H_2)$ with $q\in [1,+\infty]$ and $\alpha\in (0,1]$. For $\Omega'\Subset \Omega$, $\lambda\in(0,d)$ with $\lambda\leq 2\alpha q$ and $q'\in [1,+\infty]$ with $q'\leq 2\alpha q/\lambda$, we have
\begin{equation}\nonumber
F\in L^{2,\lambda, q'}(\Omega)\quad\imp\quad \nabla u\in L^{2,\lambda, q'}(\Omega'),
\end{equation}
 and there exists a positive constants $C:=C(d,\alpha,\lambda,q,q',\underline{a},\Omega,\Omega')$ such that
\begin{equation}\nonumber
\norm{\nabla u}{L^{2,\lambda,q'}(\Omega')}\leq C\left(\left(1 + \norm{g}{L^q(\Omega)}^{\lambda/(2\alpha)}\right)\norm{\nabla u}{L^2(\Omega)}+ \norm{F}{L^{2,\lambda,q'}(\Omega)}\right).
 \end{equation}
\end{theorem}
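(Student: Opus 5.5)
The plan is to run the classical Campanato freezing argument at each fixed center $x_0\in\Omega'$, keeping track of how the constants depend on $x_0$. The Hölder seminorm of $a$ will be replaced by $\omega_\alpha[a](x_0)\le 2g(x_0)$ from Proposition~\ref{prop:1}. Set $R_0:=\min(\dist(\Omega',\partial\Omega),1)/2$, and fix $x_0\in\Omega'$ and $0<\rho\le r\le R_0$, so that $B(x_0,r)\subset\Omega$. On $B_r:=B(x_0,r)$, write $u=v+w$, where $v$ is the $\bar a:=a_r(x_0)$-harmonic replacement: $-\div(\bar a\nabla v)=0$ in $B_r$ with $v-u\in H^1_0(B_r)$.

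Since $\bar a$ is a positive constant, $v$ is harmonic, and the standard decay estimate gives $\int_{B_\rho}|\nabla v|^2\le C(\rho/r)^d\int_{B_r}|\nabla v|^2$. The remainder $w\in H^1_0(B_r)$ solves $-\div(\bar a\nabla w)=\div\big((a-\bar a)\nabla u+F\big)$. Testing this equation with $w$ and using $(H_1)$ gives
\begin{equation}\nonumber
\int_{B_r}|\nabla w|^2\le C\Big(\norm{a-\bar a}{L^\infty(B_r)}^2\int_{B_r}|\nabla u|^2+\int_{B_r}|F|^2\Big)\le C\Big(r^{2\alpha}\omega_\alpha[a](x_0)^2\int_{B_r}|\nabla u|^2+r^\lambda E_\Omega^{2,\lambda}[F](x_0)^2\Big).
\end{equation}
Combining the two bounds, $\phi(\rho):=\int_{B_\rho}|\nabla u|^2$ satisfies
\begin{equation}\nonumber
\phi(\rho)\le A\Big[\big((\rho/r)^d+\epsilon(r)\big)\phi(r)+r^\lambda E(x_0)^2\Big],\qquad \epsilon(r):=r^{2\alpha}G(x_0)^2,\quad G:=2g,\quad E:=E_\Omega^{2,\lambda}[F].
\end{equation}
I then apply the classical iteration lemma (Giaquinta's lemma): if $\epsilon(r)\le\epsilon_0(A,d,\lambda)$ for all $r\le r_*$, then
\begin{equation}\nonumber
\phi(\rho)\le C\big[(\rho/r_*)^\lambda\phi(r_*)+\rho^\lambda E(x_0)^2\big]\qquad\text{for all }\rho\le r_*.
\end{equation}
This uses $\lambda<d$.

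The key point, and the main obstacle, is that the admissible radius depends on $x_0$. I choose $r_*(x_0):=\min\big(R_0,(\epsilon_0^{1/2}/G(x_0))^{1/\alpha}\big)$. Then $\phi(r_*)\le\norm{\nabla u}{L^2(\Omega)}^2$, so for $\rho\le r_*$ we get $\rho^{-\lambda}\phi(\rho)\le C\big[r_*^{-\lambda}\norm{\nabla u}{L^2}^2+E(x_0)^2\big]$. For $\rho>r_*$ the trivial bound $\rho^{-\lambda}\phi(\rho)\le r_*^{-\lambda}\norm{\nabla u}{L^2}^2$ holds. Since $r_*^{-\lambda}\le C(R_0^{-\lambda}+G(x_0)^{\lambda/\alpha})$, this yields the pointwise estimate
\begin{equation}\nonumber
E_{\Omega'}^{2,\lambda}[\nabla u](x_0)\le C\Big(\big(1+g(x_0)^{\lambda/(2\alpha)}\big)\norm{\nabla u}{L^2(\Omega)}+E_\Omega^{2,\lambda}[F](x_0)\Big).
\end{equation}
Taking the $L^{q'}(\Omega')$ norm, the only thing to check is $g^{\lambda/(2\alpha)}\in L^{q'}$. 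This holds because $q'\lambda/(2\alpha)\le q$ and $\Omega$ is bounded, and it gives
\begin{equation}\nonumber
\norm{g^{\lambda/(2\alpha)}}{L^{q'}}\le C\norm{g}{L^q}^{\lambda/(2\alpha)}.
\end{equation}
This is exactly where the conditions $q'\le 2\alpha q/\lambda$ and $\lambda\le2\alpha q$ enter, the latter ensuring that $q'=1$ is admissible. The case $q'=\infty$ is identical.

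Some care is needed at two points. First, the concentration function on $\Omega'$ uses $B_{\Omega'}(x_0,\rho)\subset B(x_0,\rho)$, so it is dominated by $\phi$. Second, the iteration lemma must be applied with $\epsilon$ evaluated at the fixed point $x_0$. This only works because $(H_2)$ is non-symmetric: $\norm{a-\bar a}{L^\infty(B_r)}$ is controlled by the single value $g(x_0)$, which is exactly what Proposition~\ref{prop:1} provides.
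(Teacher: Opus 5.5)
Your proposal is correct and follows essentially the same route as the paper: you freeze the coefficient at $a_{B_r}$, split off the harmonic part, use the energy estimate with $\|a-a_{B_r}\|_{L^\infty(B_r)}\le r^\alpha\omega_\alpha[a](x_0)\le 2r^\alpha g(x_0)$, apply an iteration lemma, and integrate the resulting pointwise bound in $L^{q'}$. The only difference is cosmetic: you make the $x_0$-dependent threshold radius $r_*(x_0)$ explicit and then call the standard Giaquinta lemma, whereas the paper makes exactly this radius choice inside its own iteration lemma (Lemma~\ref{lem:it}), which is where its term $D_2B^{\gamma/\beta}\varphi(r_0)$ with $B\propto g(x_0)^2$ comes from.
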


\begin{proof} Consider $\Omega'\Subset\Omega$. From the hypotheses on $a$ and $F$, we know that almost everywhere in $\Omega$ we have, thanks to Proposition \ref{prop:1},  that $\omega_\alpha[a]\leq 2g$ and $g<+\infty$ and also $E_\Omega^{2,\lambda}[F]<+\infty$ as $F\in L^{2,\lambda,q'}(\Omega)$. We then fix $x\in\Omega'$ such that these three inequalities are fulfilled at point $x$.  

We start by defining the function 
\begin{equation}\nonumber
\ph(r):=\int_{B_\Omega(x,r)}|\nabla u|^2,\quad \forall r>0,
\end{equation}
which is nonnegative, nondecreasing and bounded by $\norm{\nabla u}{L^2(\Omega)}^2$. Take $\lambda\in (0,d)$. The first task is to prove that $r\mapsto r^{-\lambda}\ph(r)$ is bounded. Let $r_0:=\dist(\Omega',\partial\Omega)>0$ we first remark that for $r\geq r_0$, $r^{-\lambda}\ph(r)\leq r_0^{-\lambda}\norm{\nabla u}{L^2(\Omega)}^2$. For $r<r_0$ we have that $B_\Omega(x,r)=B(x,r)$ that we denote $B_r$ for simplicity. Since $a\geq \underline a$ almost everywhere, its mean value satisfies $a_{B_r}\geq \underline a$. As $u\in H^1(\Omega)$ is a weak solution of  \eqref{eq:main}, it satisfies
\begin{equation}\label{eq:decomp}
-a_{B_r}\Delta u = -\div((a_{B_r}-a)\nabla u)+\div F\quad  \tin B_r. 
\end{equation}
We decompose $u=u_1+u_2$ where $u_1\in H^1_0(B_r)$ is the unique solution  of homogeneous Dirichlet problem
\begin{equation}\label{eq:VF}
\left\{\begin{aligned}
-a_{B_r}\Delta u_1 &= -\div((a_{B_r}-a)\nabla u)+\div F\quad  \tin B_r,\\
u_1 &= 0\quad \ton \partial B_r, 
\end{aligned}\right.
\end{equation}
and $u_2$ is harmonic in $B_r$. For any $0<\rho<r$, we have
\begin{equation}\nonumber
\ph(\rho) \leq 2\int_{B_\rho}|\nabla u_1|^2 + 2\int_{B_\rho}|\nabla u_2|^2.
\end{equation}
As $u_2$ is harmonic in $B_r$, we use the decay speed result \ref{theo:decay} that says 
\begin{equation}\nonumber
\int_{B_\rho}|\nabla u_2|^2\leq C\frac{\rho^d}{r^d}\int_{B_r}|\nabla u_2|^2
\end{equation} 
where $C$ depends only on $d$. Moreover, 
\begin{equation}\nonumber
\begin{aligned}
\int_{B_r}|\nabla u_2|^2 &\leq 2\int_{B_r}|\nabla u|^2 + 2\int_{B_r}|\nabla u_1|^2\\
 &\leq 2\ph(r) + 2\int_{B_r}|\nabla u_1|^2.\\
\end{aligned}
\end{equation}
Combining the three precedent inequalities, it comes that 
\begin{equation}\label{eq:inter1}
\begin{aligned}
\ph(\rho) &\leq 2\int_{B_\rho}|\nabla u_1|^2 + 2C\frac{\rho^d}{r^d}\left(2\ph(r) + 2\int_{B_r}|\nabla u_1|^2\right)\\
& \leq 4C\frac{\rho^d}{r^d}\ph(r) + \left(2+4C\frac{\rho^d}{r^d}\right)\int_{B_r}|\nabla u_1|^2\\
&\leq A\left(\frac{\rho^d}{r^d}\ph(r) + \int_{B_r}|\nabla u_1|^2\right),
\end{aligned}
\end{equation}
with $A:=2+4C$ is a constant that depends only on $d$. 

We now compute a bound on $u_1$ based on the energy estimate of variational formulation of \eqref{eq:VF}. Using $u_1$ as a test function we get
\begin{equation}\nonumber
\begin{aligned}
a_{B_r}\int	_{B_r}|\nabla u_1|^2 &= \int_{B_r}(a_{B_r}-a)\nabla u\cdot \nabla u_1 -  \int_{B_r}F\cdot \nabla u_1 \\
&\leq \norm{a-a_{B_r}}{L^\infty(B_r)}\norm{\nabla u}{L^2(B_r)}\norm{\nabla u_1}{L^2(B_r)} + \norm{F}{L^2(B_r)}\norm{\nabla u_1}{L^2(B_r)}
\end{aligned}
\end{equation}
and so 
\begin{equation}\nonumber
\ul a \norm{\nabla u_1}{L^2(B_r)}\leq a_{B_r}\norm{\nabla u_1}{L^2(B_r)}\leq \norm{a-a_{B_r}}{L^\infty(B_r)}\norm{\nabla u}{L^2(B_r)} + \norm{F}{L^2(B_r)}. 
\end{equation}
Squaring this inequality, we obtain 
\begin{equation}\nonumber
\int_{B_r}|\nabla u_1|^2\leq \frac{2}{\ul a^2}\left(\norm{a-a_{B_r}}{L^\infty(B_r)}^2\ph(r)+\norm{F}{L^2(B_r)}^2\right). 
\end{equation}
Now us the hypothesis on $a$: 
\begin{equation}\nonumber
\norm{a-a_{B_r}}{L^\infty(B_r)}\leq r^\alpha \omega_\alpha[a](x)\leq 2r^\alpha g(x),
\end{equation}
and from the definition of the $E_\Omega^{2,\lambda}[F]$, we write 
\begin{equation}\nonumber
\norm{F}{L^2(B_r)}\leq r^{\lambda/2}E_\Omega^{2,\lambda}[F](x). 
\end{equation}

Using these two controls we get 
\begin{equation}\label{eq:inter2}
\int_{B_r}|\nabla u_1|^2\leq \frac{2}{\ul a^2}\left(4r^{2\alpha}g(x)^2\ph(r)+r^\lambda E_\Omega^{2,\lambda}[F](x)^2\right). 
\end{equation}
Combining now \eqref{eq:inter1} and  \eqref{eq:inter2} we get 
\begin{equation}\nonumber
\ph(\rho)\leq A\left(\frac{\rho^d}{r^d}\ph(r) +Br^{2\alpha}\ph(r)\right) + Cr^\lambda,\quad 0<\rho\leq r\leq r_0. 
\end{equation}
with $A$ depending only on $d$, $B:= 8 g(x)^2/\ul a^2$ and $C:=2A (E_\Omega^{2,\lambda}[F](x))^2/\ul a^2$. From such inequality, the the iteration lemma (see Appendix \ref{app:iteration})shows that the function $r^{-\lambda}\ph(r)$ is bounded in $(0,r_0]$. More precisely, in this case it gives 
\begin{equation}\nonumber
r^{-\lambda}\ph(r)\leq D_1\left(r_0^{-\lambda}\ph(r_0) +D_2B^{\lambda/(2\alpha)}\ph(r_0)+ C\right),\quad \forall r\in(0,r_0].
 \end{equation}
where $D_1:=D_1(d,\lambda)$ and $D_2:=D_2(d,\alpha,\lambda)$. Taking the square root of this last control and recalling that as $r<r_0$, $B_{\Omega'}(x,r)\subset B_{\Omega}(x,r) \subset B_r$, 
\begin{equation}\nonumber
\begin{aligned}
r^{-\lambda/2}\norm{\nabla u}{L^2(B_{\Omega'}(x,r))} &\leq D_1^{1/2}\left(\left(r_0^{-\lambda/2}+D_3g(x)^{\lambda/(2\alpha)}\right)\norm{\nabla u}{L^2(\Omega)}+ D_4E_\Omega^{2,\lambda}[F](x)\right)\\
\end{aligned}
\end{equation}
where $D_3:=D_3(d,\alpha,\lambda,\ul a)$ and $D_4:=D_4(d,\ul a)$ are positive constants. Remarking that for $r\geq r_0$ we have
\begin{equation}\nonumber
r^{-\lambda/2}\norm{\nabla u}{L^2(B_{\Omega'}(x,r))} \leq r_0^{-\lambda/2}\norm{\nabla u}{L^2(\Omega)},\quad r_0< r< +\infty,
\end{equation}
we can take the supremum over $r\in (0,+\infty)$ to get 
\begin{equation}\nonumber
E_{\Omega'}^{2,\lambda}[\nabla u](x) \leq D_1^{1/2}\left(\left(r_0^{-\lambda/2}+D_3g(x)^{\lambda/(2\alpha)}\right)\norm{\nabla u}{L^2(\Omega)}+ D_4E_\Omega^{2,\lambda}[F](x)\right). 
\end{equation}
As this last control is valid for almost every $x\in\Omega'$, one can take $L^{q'}$-norm with $q'\leq 2\alpha q/\lambda$ in $x$ over $\Omega'$ to get when $q'<+\infty$
\begin{equation}\nonumber
\norm{\nabla u}{L^{2,\lambda,q'}(\Omega')} \leq D_1^{1/2}\left(\left(r_0^{-\lambda/2}|\Omega|^{1/q'}+D_5\norm{g}{L^q(\Omega)}^{\lambda/(2\alpha)}\right)\norm{\nabla u}{L^2(\Omega)}+ D_4\norm{F}{L^{2,\lambda,q'}(\Omega)}\right). 
\end{equation}
where $D_5:=D_5(d,\alpha,\lambda,\ul a,q',q,\Omega)$.  This is stil valid even if  $q=q'=+\infty$. We deduce that $\nabla u\in L^{2,\lambda,q'}(\Omega')$ as well as the demanded final control. 
\end{proof}

\begin{remark}
In the classical Schauder--Campanato theory, the coefficient $a$ is assumed
to belong to $C^{0,\alpha}\big(\ol\Omega\big )$, which corresponds to the case
$g\in L^\infty(\Omega)$ in hypothesis $(H_2)$.
Then one can take $q'=+\infty$, recovering the classical
transmission of Morrey regularity from $F$ to $\nabla u$. The present result shows that the same mechanism remains valid when the
Hölder seminorm of the coefficient is no longer uniformly bounded but only
belongs to $L^q(\Omega)$ through the oscillation control $(H_2)$.
\end{remark}

\begin{remark} It is interesting to remark that
the restriction $\lambda\leq 2\alpha q$ naturally reflects the
integrability of the oscillation $\omega_\alpha[a]$ of the coefficient.
Indeed, the proof requires the quantity
$g^{\lambda/(2\alpha)}$ to belong to $L^{q'}(\Omega)$, which is equivalent
to the condition $q'\leq 2\alpha q/\lambda$ an that is possible only if $\lambda\leq 2\alpha q$ as $q'\geq 1$. 
\end{remark}

\subsection{Estimate in generalized Campanato spaces}

We now propose an estimate in generalized Campanato spaces. Once again, the proof follows the classical Morrey--Campanato approach for elliptic equations with replacement of the global boundedness of the oscillations by suitable integrability argument.

\begin{theorem}\label{theo:2} Let $u\in H^1(\Omega)$ be a weak solution of \eqref{eq:main} with the coefficient $a$ satisfying the hypotheses $(H_1)$ and $(H_2)$ with $q\in [1,+\infty]$ and $\alpha\in (0,1]$. For $\Omega'\Subset \Omega$ and any $\lambda\in(0,d]$ with $q'\in [1,+\infty]$, for any $1\leq q''\leq (1/q+1/q')^{-1}$, then 
\begin{equation}\nonumber
\nabla u\in L^{2,\lambda,q'}(\Omega)\quad  \text{and}\quad F\in L_C^{2,\lambda+2\alpha,q''}(\Omega)\quad\imp\quad \nabla u\in L_C^{2,\lambda+2\alpha,q''}(\Omega'),
\end{equation}
 and there exists a positive constant $C:=C(d,\alpha,\lambda,q,q',q'',\underline{a},\Omega,\Omega')$ such that
  \begin{equation}\nonumber
 \norm{\nabla u}{L_C^{2,\lambda+2\alpha,q''}(\Omega')}\leq C\left(\norm{\nabla u}{L^2(\Omega)} + \norm{g}{L^q(\Omega)}\norm{\nabla u}{L^{2,\lambda,q'}(\Omega)}+ \norm{F}{L_C^{2,\lambda+2\alpha,q''}(\Omega)}\right).
 \end{equation}
\end{theorem}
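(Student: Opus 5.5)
We follow the classical Campanato argument for Schauder estimates, now tracked pointwise in the center $x$, and integrate in $x$ at the very end. Fix $\Omega'\Subset\Omega$ and set $r_0:=\dist(\Omega',\partial\Omega)$. Fix $x\in\Omega'$ such that $g(x)$, $E_\Omega^{2,\lambda}[\nabla u](x)$ and $O_\Omega^{2,\lambda+2\alpha}[F](x)$ are all finite; this holds for a.e. $x$, and by Proposition~\ref{prop:1} we also have $\omega_\alpha[a](x)\le 2g(x)$.

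Define
\[
\psi(r):=\int_{B_r}|\nabla u-(\nabla u)_{B_r}|^2 .
\]
For $r\ge r_0$ we bound $\psi(r)$ trivially by $\norm{\nabla u}{L^2(\Omega)}^2$. For $r<r_0$ we use the decomposition \eqref{eq:decomp}, but written with the constant vector $F_{B_r}$ subtracted from $F$; this is legitimate because $\div F_{B_r}=0$. So $u=u_1+u_2$, where $u_1\in H^1_0(B_r)$ solves
\[
-a_{B_r}\Delta u_1=-\div((a_{B_r}-a)\nabla u)+\div(F-F_{B_r})
\]
and $u_2$ is harmonic in $B_r$. The energy estimate, exactly as in the proof of Theorem~\ref{theo:1}, gives
\[
\int_{B_r}|\nabla u_1|^2\le \frac{2}{\ul a^2}\left(4r^{2\alpha}g(x)^2\ph(r)+\norm{F-F_{B_r}}{L^2(B_r)}^2\right)\le \frac{2}{\ul a^2}\left(4g(x)^2E_\Omega^{2,\lambda}[\nabla u](x)^2+O_\Omega^{2,\lambda+2\alpha}[F](x)^2\right)r^{\lambda+2\alpha}.
\]
Here we used $\ph(r)\le r^\lambda E^{2,\lambda}_\Omega[\nabla u](x)^2$. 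This is the key point: the Morrey information on $\nabla u$ is now a hypothesis, so no absorption is needed.

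For the harmonic part we use the Campanato decay for harmonic functions: $\nabla u_2$ is harmonic, hence
\[
\int_{B_\rho}|\nabla u_2-(\nabla u_2)_{B_\rho}|^2\le C(\rho/r)^{d+2}\int_{B_r}|\nabla u_2-(\nabla u_2)_{B_r}|^2 .
\]
If only the decay result \ref{theo:decay} for gradients is available, we apply it to the derivatives of $u_2$ together with Poincaré's inequality. Splitting $\nabla u=\nabla u_1+\nabla u_2$ and using that averages minimize the $L^2$ deviation, we get
\[
\psi(\rho)\le A\left(\frac{\rho}{r}\right)^{d+2}\psi(r)+K(x)\,r^{\lambda+2\alpha},\qquad 0<\rho\le r\le r_0,
\]
where $K(x)=C(d,\ul a)\left(g(x)^2E_\Omega^{2,\lambda}[\nabla u](x)^2+O_\Omega^{2,\lambda+2\alpha}[F](x)^2\right)$.

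Since $\lambda+2\alpha\le d+2\alpha<d+2$ (using $\alpha\le 1$ and $\lambda\le d$), the iteration lemma of Appendix~\ref{app:iteration}, applied without the $\varepsilon$-perturbation, yields
\[
r^{-(\lambda+2\alpha)}\psi(r)\le D\left(r_0^{-(\lambda+2\alpha)}\psi(r_0)+K(x)\right),\qquad r\le r_0 .
\]
Because $B_{\Omega'}(x,r)\subset B_r$, the inequality $\wt O\le O\le 2\wt O$ lets us pass to $\Omega'$-averages. Taking square roots and the supremum over $r$, we obtain for a.e. $x\in\Omega'$
\[
O_{\Omega'}^{2,\lambda+2\alpha}[\nabla u](x)\le C\left(\norm{\nabla u}{L^2(\Omega)}+g(x)\,E_\Omega^{2,\lambda}[\nabla u](x)+O_\Omega^{2,\lambda+2\alpha}[F](x)\right).
\]

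It remains to take the $L^{q''}(\Omega')$ norm. The constant term is handled by boundedness of $\Omega$. The last term is controlled by $\norm{F}{L_C^{2,\lambda+2\alpha,q''}(\Omega)}$. For the middle term we use Hölder's inequality with $1/q''\ge 1/q+1/q'$ on the bounded domain, which gives $\norm{gE}{L^{q''}}\le C\norm{g}{L^q}\norm{E}{L^{q'}}$. Adding $\norm{\nabla u}{L^2(\Omega')}$ completes the norm estimate.

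The main obstacle I expect is the harmonic Campanato decay with exponent $d+2$ and its compatibility with the iteration lemma as stated in the appendix. One must check that the lemma applies when the gap $d+2-(\lambda+2\alpha)$ is positive but possibly small. In particular, the borderline case $\lambda=d$, $\alpha=1$ forces exponent $d+2$ and must be excluded or treated separately. I would first try to verify the lemma's hypotheses directly, and otherwise reprove the one-line geometric iteration in this simpler setting without the $\varepsilon$-term.
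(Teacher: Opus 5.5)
Your proposal follows the paper's proof step for step. The paper splits $v=u-(\nabla u)_{B_r}\cdot y$ rather than $u$. The two differ by an affine harmonic function, so your $u_1$ is the paper's $v_1$, and $\nabla u_2$, $\nabla v_2$ have identical Campanato quantities. The energy bound, the $(\rho/r)^{d+2}$ decay (already the second estimate of Theorem~\ref{theo:decay}), the iteration with $B=0$ and the final H\"older step with $1/q''\ge 1/q+1/q'$ are all the same.

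On the obstacle you flag, a small positive gap $d+2-(\lambda+2\alpha)$ is harmless. Lemma~\ref{lem:it} only needs the strict inequality, and its constants may depend on $d,\lambda,\alpha$.

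The equality case is a genuine gap, and the paper shares it. Your claim $d+2\alpha<d+2$ is false for $\alpha=1$. The paper's proof likewise asserts ``$\lambda+2\alpha<d+2$'', although the statement allows $\lambda=d$, $\alpha=1$.

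This case cannot be ``treated separately''. With $\gamma=d+2$ the iteration only gives $\psi(\rho)\lesssim\rho^{\delta}$ for $\delta<d+2$, and the conclusion is in fact false. In $d=2$, take $a\equiv1$, so $(H_2)$ holds with $g\equiv0$, $\alpha=1$, $q=q'=q''=+\infty$. Take $u=x_1x_2\log|x|$ and $F=-2x_1x_2|x|^{-2}x$ on a bounded $\Omega\ni0$. Then:
\begin{itemize}
\item $-\Delta u=\div F=-4x_1x_2|x|^{-2}$;
\item $F$ is Lipschitz, hence in $L_C^{2,4,\infty}(\Omega)$;
\item $\nabla u$ is bounded, hence in $L^{2,2,\infty}(\Omega)$.
\end{itemize}
However, $\partial_{12}u=\log|x|+O(1)$. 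Comparing with balls centred at $0$, where $\partial_1u$ is odd in $x_2$, one gets $O^{2,4}_{\Omega'}[\nabla u](x)\gtrsim\log(1/|x|)$ near $0$. So $\nabla u\notin L_C^{2,4,\infty}(\Omega')$ whenever $0\in\Omega'$; this is the classical failure of Schauder estimates at the Lipschitz endpoint.

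One must therefore add the hypothesis $\lambda+2\alpha<d+2$, i.e. exclude $\lambda=d$ together with $\alpha=1$. Under that hypothesis your argument is complete.
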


\begin{proof} From the hypotheses on $a$, $F$ and $\nabla u$ we know that almost everywhere in $\Omega$ we have $\omega_\alpha[a]\leq 2g$, $g<+\infty$, $O_\Omega^{2,\lambda+2\alpha}[F]<+\infty$ and $E_\Omega^{2,\lambda}[\nabla u]<+\infty$. We then fix $x\in\Omega'$ such that these four inequalities are fulfilled at point $x$.

We start by defining the function 
\begin{equation}\nonumber
\psi(r):=\int_{B_\Omega(x,r)}|\nabla u-(\nabla u)_{B_\Omega(x,r)}|^2,\quad \forall r>0,
 \end{equation}
which is nonnegative. It is also nondecreasing as if $0<\rho\leq r<+\infty$, 
\begin{equation}\nonumber
\begin{aligned}
\psi(\rho) &=\int_{B_\Omega(x,\rho)}|\nabla u-(\nabla u)_{B_\Omega(x,\rho)}|^2=\min_{V\in\R^d}\int_{B_\Omega(x,\rho)}|\nabla u-V|^2\\
&\leq \int_{B_\Omega(x,\rho)}|\nabla u-(\nabla u)_{B_\Omega(x,r)}|^2\leq \int_{B_\Omega(x,r)}|\nabla u-(\nabla u)_{B_\Omega(x,r)}|^2\\
& \leq \psi(r).
\end{aligned}
\end{equation}
The first task is to prove that $r\mapsto r^{-\lambda-2\alpha}\psi(r)$ is bounded. Let $r_0:=\dist(\Omega',\partial\Omega)>0$ we first remark that for $r>r_0$, we can write $r^{-\lambda-2\alpha}\psi(r)\leq r_0^{-\lambda-2\alpha}\norm{\nabla u - (\nabla u)_\Omega}{L^2(\Omega)}^2$. Consider now $0<r\leq r_0$, then $B_\Omega(x,r)=B(x,r)$ denoted $B_r$ for simplicity and then 
\begin{equation}\nonumber
\psi(r)=\int_{B_r}|\nabla u - (\nabla u)_{B_r}|^2,\quad \forall r\in (0,r_0],
\end{equation}

Define now the function $v(y):=u(y)-(\nabla u)_{B_r}\cdot y$, it satisfies $\nabla v = \nabla u -(\nabla u)_{B_r}$ and $\Delta v = \Delta u$. From \eqref{eq:decomp} it satisfies then
\begin{equation}\nonumber
-a_{B_r}\Delta v = -\div((a_{B_r}-a)\nabla u)+\div (F-F_{B_r})\quad  \tin B_r. 
\end{equation}
We decompose $v$ as $v=v_1+v_2$ where $v_1\in H^1_0(B_r)$ is the unique solution of the elliptic problem 
\begin{equation}\label{eq:VF2}
\left\{\begin{aligned}
-a_{B_r}\Delta v_1 &= -\div((a_{B_r}-a)\nabla u)+\div (F-F_{B_r}),\quad  \tin B_r,\\
v_1 &= 0\quad \ton \partial B_r, 
\end{aligned}\right.
\end{equation}
and $v_2$ is harmonic in $B_r$. Call now for $0<\rho\leq r$,
\begin{equation}\nonumber
\psi_1(\rho):= \int_{B_\rho}|\nabla v_1-(\nabla v_1)_{B_\rho}|^2\quad \text{ and }\quad \ \psi_2(\rho):= \int_{B_\rho}|\nabla v_2-(\nabla v_2)_{B_\rho}|^2.
\end{equation}
It is clear that $\psi(\rho)\leq 2\psi_1(\rho)+2\psi_2(\rho)$ for any $\rho\in(0,r]$. Let us bound these two terms independently. 

First
\begin{equation}\nonumber
\psi_1(r)=\int_{B_r}|\nabla v_1-(\nabla v_1)_{B_r}|^2\leq \int_{B_r}|\nabla v_1|^2.
\end{equation}
From the variational formulation of \eqref{eq:VF2} we get
\begin{equation}\nonumber
\begin{aligned}
a_{B_r}\int_{B_r}|\nabla v_1|^2 &=\int_{B_r}(a_{B_r}-a)(\nabla v+(\nabla u)_{B_r})\cdot\nabla v_1+\int_{B_r}(F-F_{B_r})\cdot \nabla v_1,\\
&\leq \norm{a-a_{B_r}}{L^\infty(B_r)}\norm{\nabla u}{L^2(B_r)}\norm{\nabla v_1}{L^2(B_r)} +\norm{F-F_{B_r}}{L^2(B_r)}\norm{\nabla v_1}{L^2(B_r)},
\end{aligned}
\end{equation}
and then 
\begin{equation}\nonumber
\begin{aligned}
\ul a \norm{\nabla v_1}{L^2(B_r)} &\leq \norm{a-a_{B_r}}{L^\infty(B_r)}\norm{\nabla u}{L^2(B_r)} +\norm{F-F_{B_r}}{L^2(B_r)}.\\
\ul a^2 \norm{\nabla v_1}{L^2(B_r)}^2 &\leq 2\norm{a-a_{B_r}}{L^\infty(B_r)}^2\norm{\nabla u}{L^2(B_r)}^2 +2\norm{F-F_{B_r}}{L^2(B_r)}^2. 
\end{aligned}
\end{equation}
By the definition of $\omega_\alpha[a]$, $E_\Omega^{2,\lambda}[\nabla u]$ and $O_\Omega^{2,\lambda+2\alpha}[F]$ we write
\begin{equation}\nonumber
\left\{\begin{aligned}
\norm{a-a_{B_r}}{L^\infty(B_r)} &\leq r^{\alpha}\omega_\alpha[a](x)\leq 2 r^\alpha g(x)\\
\norm{\nabla u}{L^2(B_r)} &\leq r^{\lambda/2}E_\Omega^{2,\lambda}[\nabla u](x)\\
\norm{F-F_{B_r}}{L^2(B_r)} &\leq r^{\lambda/2+\alpha}O_\Omega^{2,\lambda+2\alpha}[F](x)
\end{aligned}\right.
\end{equation}
and deduce
\begin{equation}\nonumber
\psi_1(r)\leq \frac{2}{\ul a^2}r^{\lambda+2\alpha}\left(4g(x)^2(E_\Omega^{2,\lambda}[\nabla u](x))^2+ (O_\Omega^{2,\lambda+2\alpha}[F](x))^2\right).
\end{equation}

As $v_2$ is harmonic, using the decay speed result (Theorem \ref{theo:decay}), there exists a constant $\kappa:=\kappa(d)>0$ such that 
\begin{equation}\nonumber
\psi_2(\rho) \leq \kappa\frac{\rho^{d+2}}{r^{d+2}}\psi_2(r),\quad \forall \rho\in(0,r]. 
\end{equation}
Now considering that $\psi_2(r)\leq 2\psi(r)+2\psi_1(r)$ it comes as $\rho/r\leq 1$, 
\begin{equation}\nonumber
\begin{aligned}
\psi_2(\rho) &\leq 2\kappa\frac{\rho^{d+2}}{r^{d+2}}\psi(r) + 2\kappa\psi_1(r),\quad \forall\rho\in(0,r],\\
\end{aligned}
\end{equation}
and so for all $\rho\in(0,r]$,
\begin{equation}\label{eq:inter3}
\begin{aligned}
\psi(\rho) &\leq 2\psi_1(\rho)+2\psi_2(\rho)\\
&\leq 4\kappa\frac{\rho^{d+2}}{r^{d+2}}\psi(r) + (4\kappa+2)\psi_1(r)\\
&\leq A\frac{\rho^{d+2}}{r^{d+2}}\psi(r) + Cr^{\lambda+2\alpha},
\end{aligned}
\end{equation}
where $A:=4\kappa$ depends only on $d$ and 
\begin{equation}\nonumber
C:=\frac{4(2\kappa+1)}{\ul a^2}\left(4g(x)^2(E_\Omega^{2,\lambda}[\nabla u](x))^2+ (O_\Omega^{2,\lambda+2\alpha}[F](x))^2\right). 
\end{equation}
Applying now the iteration lemma \ref{lem:it} to the inequality obtained in \eqref{eq:inter3} we get that as $\lambda+2\alpha<d+2$, 
 \begin{equation}\nonumber
r^{-\lambda-2\alpha}\psi(r)\leq D_1\left(r_0^{-\lambda-2\alpha}\psi(r_0) + C\right)\quad \forall r\in(0,r_0],
 \end{equation}
 where $D_1:=D_1(d,\lambda,\alpha)$. Taking the square root of this expression, we get for all $r\in (0,r_0]$,
 \begin{equation}\nonumber
 \begin{aligned}
  &r^{-\lambda/2-\alpha}\norm{\nabla u-(\nabla u)_{B_{\Omega'}(x,r)} }{L^2(B_{\Omega'}(x,r))} \leq r^{-\lambda/2-\alpha}\psi(r)^{1/2} \\ &\leq D_1^{1/2}\left(r_0^{-\lambda/2-\alpha}\norm{\nabla u-(\nabla u)_\Omega}{L^2(\Omega)} + 2D_2 g(x)E_\Omega^{2,\lambda}[\nabla u](x)+ D_2O_\Omega^{2,\lambda+2\alpha}[F](x)\right)
 \end{aligned}
 \end{equation}
 where $D_2:=2\sqrt{2\kappa+1}/\ul a$. This inequality is still valid for $r\geq r_0$. Taking the supremum over $r>0$, we arrive at
 \begin{equation}\nonumber
\begin{aligned}
O_{\Omega'}^{2,\lambda+2\alpha}[\nabla u](x)
&\leq D_1^{1/2}\Big(
2r_0^{-\lambda/2-\alpha}
\norm{\nabla u}{L^2(\Omega)} 
+ 2D_2 g(x)E_\Omega^{2,\lambda}[\nabla u](x)
+ D_2O_\Omega^{2,\lambda+2\alpha}[F](x)
\Big).
\end{aligned}
\end{equation}
which is valid for almost every $x\in\Omega'$. Now taking $q''\in [1,+\infty]$ such that $1/q'' \geq 1/q+1/q'$ we take the $L^{q''}$--norm over $\Omega'$ and use Hölder:
\begin{equation}\nonumber
\begin{aligned}
\norm{O_{\Omega'}^{2,\lambda+2\alpha}[\nabla u]}{L^{q''}(\Omega')}
\leq\;
&D_1^{1/2}\Big(
D_3\norm{\nabla u}{L^2(\Omega)} \\
&\qquad + 2D_2 |\Omega|^{\frac 1s} \norm{g}{L^q(\Omega)}
\norm{\nabla u}{L^{2,\lambda,q'}(\Omega)}
+ D_2\norm{F}{L_C^{2,\lambda+2\alpha,q''}(\Omega)}
\Big).
\end{aligned}
\end{equation}
 where $1/s=1/q''-1/q-1/q'$ and $D_3:=D_3(r_0,\alpha,\lambda,\Omega,q'')$. Remembering that
\begin{equation}\nonumber
\norm{\nabla u}{L_C^{2,\lambda+2\alpha,q''}(\Omega')}
:=
\norm{\nabla u}{L^{2}(\Omega')}
+
\norms{O_{\Omega'}^{2,\lambda+2\alpha}[\nabla u]}{L^{q''}(\Omega')}
\end{equation}
and that $\norm{\nabla u}{L^2(\Omega')}
\leq
\norm{\nabla u}{L^2(\Omega)}$,
we obtain the desired estimate and conclude.
\end{proof}

\begin{remark}
Combining Theorems \ref{theo:1} and \ref{theo:2}, one obtains an interesting 
regularity transfer mechanism. Theorem \ref{theo:1} first provides a
generalized Morrey estimate for $\nabla u$ from the regularity of $a$ and $F$.
This estimate can then be inserted into Theorem \ref{theo:2} to obtain
a stronger generalized Campanato estimate for $\nabla u$. This strategy is the
basis of the estimates derived in the next section.
\end{remark}

\begin{remark}
In the critical case $\lambda=d$, the generalized Morrey space
$L^{2,d,q'}(\Omega)$ coincides with $L^{q'}(\Omega)$.
Consequently, Theorem \ref{theo:2} may be interpreted in this case as a generalized Campanato
regularity estimate assuming only an $L^{q'}$ control of the gradient. Nevertheless, note that the case $\lambda=d$ cannot be directly obtained from \ref{theo:1}. 
\end{remark}

\begin{remark}
A notable feature of Theorem \ref{theo:2} is that the oscillation of the
coefficient appears through the product
\[
g\, E_\Omega^{2,\lambda}[\nabla u].
\]
The admissible exponent $q''$ is therefore determined by the Hölder
compatibility condition
\[
\frac1{q''}\geq \frac1q+\frac1{q'}.
\]
Hence, applying Theorem \ref{theo:2} allows one to increase the
Campanato parameter from $\lambda$ to $\lambda+2\alpha$.
The price to pay for this improvement is a possible decrease of the
integrability exponent, from $q'$ to $q''\leq q'$.
\end{remark}

\section{Deduced gradient estimates}

In this section, we make use of the regularity transfer mechanisms provided by
Theorems \ref{theo:1} and \ref{theo:2} to derive a series of regularity estimates in
Hölder, Lebesgue and fractional Sobolev spaces. To do so, we repeatedly use
the embedding rules established in Propositions \ref{prop:injmorrey} and
\ref{prop:injcamp} without recalling the precise references in each proof.

With this first corollary, we recover the classical Schauder estimate, which
can be found for instance in
\cite{gilbarg1998elliptic,giaquinta2013introduction,troianiello2013elliptic}.
It relies on successive applications of the two main theorems in the particular
case $q=q'=q''=+\infty$.

\begin{corollary}[Schauder estimate]\label{cor:Holder} If the coefficient $a$ satisfies $(H_1)$ in $\Omega$, for $\alpha\in (0,1)$ and $\Omega'\Subset \Omega$ being a $\theta$-smooth subdomain, we have
\begin{equation}\nonumber
a\in \cC^{0,\alpha}\big(\overline\Omega\big)\quad \text{and}\quad  F\in \cC^{0,\alpha}\big(\overline\Omega\big)\quad\imp\quad \nabla u\in \cC^{0,\alpha}\left(\ol{\Omega'}\right). 
\end{equation}
\end{corollary}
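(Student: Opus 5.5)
The plan is to run the two transfer theorems in the fully uniform case $g\in L^\infty$, i.e. $q=q'=q''=+\infty$, on a chain of nested subdomains, with one intermediate bootstrap step.

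\textbf{Setup.} If $a\in\cC^{0,\alpha}(\ol\Omega)$, then $(H_2)$ holds with $g\equiv[a]_{\alpha}$ constant, so $q=+\infty$ is allowed. For $F$, Hölder continuity gives, for every $x$ and $r$,
\[
\norm{F-F_{B_\Omega(x,r)}}{L^2(B_\Omega(x,r))}\le C[F]_\alpha r^{\alpha+d/2},
\]
so $F\in L_C^{2,d+2\alpha,\infty}(\Omega)$ and, by point 2 of Proposition \ref{prop:injcamp}, $F\in L_C^{2,\lambda+2\alpha,\infty}(\Omega)$ for every $\lambda\le d$. Since $F\in L^\infty$, point 7 of Proposition \ref{prop:E} gives $F\in L^{2,\lambda,\infty}(\Omega)$ for all $\lambda\le d$. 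I fix nested smooth (hence $\theta$-smooth) domains $\Omega'\Subset\Omega_2\Subset\Omega_1\Subset\Omega$. The function $u$ is still a weak solution of \eqref{eq:main} on each of them, so each theorem can be reapplied with a smaller domain playing the role of $\Omega$.

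\textbf{Step 1 (Morrey).} Pick $\lambda_0\in(\max(0,d-2\alpha),d)$. Since $q=q'=\infty$, the conditions $\lambda_0\le2\alpha q$ and $q'\le 2\alpha q/\lambda_0$ hold trivially. Theorem \ref{theo:1} then gives $\nabla u\in L^{2,\lambda_0,\infty}(\Omega_1)$.

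\textbf{Step 2 (first Campanato gain).} Apply Theorem \ref{theo:2} on $\Omega_1$ with $\lambda=\lambda_0$, $q'=q''=\infty$, and $F\in L_C^{2,\lambda_0+2\alpha,\infty}$. This gives $\nabla u\in L_C^{2,\lambda_0+2\alpha,\infty}(\Omega_2)$. Here $\lambda_0+2\alpha=d+2\beta$ with $\beta=(\lambda_0+2\alpha-d)/2\in(0,\alpha)\subset(0,1)$. Point 8 of Proposition \ref{prop:injcamp}, the Campanato--Hölder equivalence on the $\theta$-smooth $\Omega_2$, then gives $\nabla u\in\cC^{0,\beta}(\ol{\Omega_2})$. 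In particular $\nabla u\in L^\infty(\Omega_2)$, and point 7 of Proposition \ref{prop:E} gives $\nabla u\in L^{2,d,\infty}(\Omega_2)$.

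\textbf{Step 3 (final gain).} Apply Theorem \ref{theo:2} once more on $\Omega_2$, now with $\lambda=d$ (which is allowed there) and $q'=q''=\infty$. This gives $\nabla u\in L_C^{2,d+2\alpha,\infty}(\Omega')$. Since $\alpha<1$ and $\Omega'$ is $\theta$-smooth, point 8 of Proposition \ref{prop:injcamp} identifies this space with $\cC^{0,\alpha}(\ol{\Omega'})$, which concludes.

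\textbf{Main obstacle.} The delicate point is bridging the gap at $\lambda=d$. Theorem \ref{theo:1} only yields $\lambda<d$, and point 5 of Proposition \ref{prop:injcamp} is proved via point 4, which needs $q<\infty$. This is why I pass through an intermediate Hölder exponent $\beta$ to obtain boundedness of $\nabla u$. That step needs $\lambda_0+2\alpha>d$, which forces the choice $\lambda_0>d-2\alpha$, and it needs $\theta$-smoothness of the intermediate domain for the Campanato--Hölder equivalence. If the Campanato estimate on $\Omega_2$ turned out to be affected by boundary effects, I would instead check it directly on balls $B(x,r)\subset\Omega_1$, exactly as in the proof of Theorem \ref{theo:2}.
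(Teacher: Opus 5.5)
Your proposal is correct and follows the paper's proof step for step. The chain is Theorem \ref{theo:1} with $q=q'=\infty$, then one application of Theorem \ref{theo:2} to reach an intermediate Campanato--Hölder bound (hence $\nabla u\in L^\infty\sim L^{2,d,\infty}$ on an intermediate domain), then Theorem \ref{theo:2} again with $\lambda=d$, concluded by the Campanato--Hölder equivalence on $\Omega'$. The paper fixes $\lambda_0=d-\alpha$ (intermediate exponent $\alpha/2$), which is a particular case of your range $\lambda_0\in(\max(0,d-2\alpha),d)$, and your direct check that $F\in L_C^{2,d+2\alpha,\infty}$ and $F\in L^{2,\lambda,\infty}$ avoids needing $\theta$-smoothness of $\Omega$ itself.
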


\begin{proof} We successively apply Theorem \ref{theo:1} and then Theorem \ref{theo:2} twice. Let $\Omega'=\Omega_3\Subset\Omega_2\Subset\Omega_1\Subset\Omega$ be some intermediate $\theta$-smooth subdomains. 

As $a\in \cC^{0,\alpha}\left(\ol\Omega\right)$, it satisfies the hypothesis $(H_2)$ with $\alpha$ and $q=+\infty$. As $F\in \cC^{0,\alpha}\big(\overline\Omega\big)\sim L_C^{2,d+2\alpha,\infty}(\Omega)\inj L_C^{2,d-\alpha,\infty}(\Omega) \sim L^{2,d-\alpha,\infty}(\Omega)$, we first apply Theorem \ref{theo:1} with $q=+\infty$ and $\lambda = d-\alpha$. We get that $\nabla u\in L^{2,d-\alpha,\infty}(\Omega_1)$.

As $F\in L_C^{2,d+2\alpha,\infty}(\Omega)\inj L_C^{2,d+\alpha,\infty}(\Omega)$, and $\nabla u\in L^{2,d-\alpha,\infty}(\Omega_1)$, we apply Theorem \ref{theo:2} in $\Omega_1$ with $\lambda=d-\alpha$ to get that $\nabla u\in L_C^{2,d+\alpha,\infty}(\Omega_2)\sim \cC^{0,\alpha/2}\big(\overline{\Omega_2}\big)\inj L^\infty(\Omega_2)\sim L^{2,d,\infty}(\Omega_2)$. 

As $F\in L_C^{2,d+2\alpha,\infty}(\Omega)$ and $\nabla u\in L_C^{2,d,\infty}(\Omega_2)$, we re-apply Theorem \ref{theo:2} in $\Omega_2$ with $\lambda=d$ to deduce that $\nabla u\in L_C^{2,d+2\alpha,\infty}(\Omega_3)\sim \cC^{0,\alpha}\big(\overline{\Omega_3}\big)$.
\end{proof}

\begin{corollary}[$L^r$ regularity for $\alpha q>d$]\label{cor:Lr}
Assume that the coefficient $a$ satisfies $(H_1)$ and $(H_2)$ with
$\alpha q>d$ in $\Omega$.
Let $\Omega'\Subset\Omega$ be a $\theta$-smooth subdomain.
Then
\[
\forall r\in(2,+\infty),\qquad
F\in L^r(\Omega)
\quad\imp\quad
\nabla u\in L^r(\Omega').
\]
\end{corollary}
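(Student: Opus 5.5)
The plan is to reduce to the uniformly Hölder case and then chain Theorem \ref{theo:1} and Theorem \ref{theo:2} with the integrability exponent kept equal to $r$ throughout. The Campanato parameter is stopped exactly at the critical value $d$, where Proposition \ref{prop:injcamp} point 7 identifies $L_C^{2,d,r}$ with $L^r$.

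\emph{Step 1 (reduction to $q=+\infty$).} Since $\alpha q>d$, Proposition \ref{prop:H2impSobolev} gives $a\in\cC^{0,\alpha''}$ for every $0<\alpha''<\alpha-d/q$.
\begin{itemize}
\item If $\Omega$ is not assumed $\theta$-smooth, I first pick a $\theta$-smooth (e.g. Lipschitz) subdomain $\Omega_0$ with $\Omega'\Subset\Omega_0\Subset\Omega$ and work in $\Omega_0$. The restriction of $u$ is still a weak solution there, and $(H_1)$, $(H_2)$ restrict.
\item I fix an exponent $\beta\in(0,\min(\alpha'',d/4)]$. Decreasing the Hölder exponent is harmless on a bounded set, so $a$ satisfies $(H_2)$ in $\Omega_0$ with exponent $\beta$, $q=+\infty$ and $g$ constant.
\item The choice $\beta\le d/4$ only guarantees that $\lambda:=d-2\beta$ lies in $(0,d)$.
\end{itemize}
I also choose intermediate $\theta$-smooth domains $\Omega'=\Omega_2\Subset\Omega_1\Subset\Omega_0$.

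\emph{Step 2 (Morrey step).} Since $r>2$ and $\Omega_0$ is $\theta$-smooth, Proposition \ref{prop:injmorrey} point 4 gives $F\in L^r(\Omega_0)\sim L^{2,d,r}(\Omega_0)$. Point 2 (same $p$ and $q$, with $\lambda\le d$) then gives $F\in L^{2,\lambda,r}(\Omega_0)$. I apply Theorem \ref{theo:1} with $q=+\infty$, $q'=r$ and this $\lambda$. Its constraints $\lambda\le 2\beta q$ and $q'\le 2\beta q/\lambda$ are vacuous when $q=+\infty$. This yields $\nabla u\in L^{2,d-2\beta,r}(\Omega_1)$, with the corresponding norm bound.

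\emph{Step 3 (Campanato step).} Proposition \ref{prop:injcamp} point 1 (or point 7) gives $F\in L_C^{2,d,r}(\Omega_1)=L_C^{2,\lambda+2\beta,r}(\Omega_1)$. I apply Theorem \ref{theo:2} on $\Omega_1$ with $q=+\infty$, $q'=r$, $q''=r$ and $\lambda=d-2\beta\in(0,d]$. The Hölder condition $1/q''\ge 1/q+1/q'$ reads $1/r\ge 1/r$, which holds. We obtain $\nabla u\in L_C^{2,d,r}(\Omega_2)$. Finally, Proposition \ref{prop:injcamp} point 7 (requiring $2<r<\infty$ and $\Omega_2$ $\theta$-smooth) gives $\nabla u\in L^r(\Omega')$, with an estimate in terms of $\norm{\nabla u}{L^2}$ and $\norm{F}{L^r}$.

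\emph{Main obstacle.} The delicate point is the bookkeeping of exponents. The Morrey parameter must stay strictly below $d$ for Theorem \ref{theo:1}, while the final Campanato parameter must hit exactly $d$, since overshooting past $d$ would demand Campanato regularity of $F$ beyond $L^r$. The integrability exponent must also never drop below $r$. Choosing the gain $2\beta$ to match $d-\lambda$ resolves this, and it works only because the reduction to $q=+\infty$ makes the $q$-constraints of both theorems trivial. The remaining technical care is ensuring the $\theta$-smoothness of every domain on which an equivalence with $L^r$ is invoked.
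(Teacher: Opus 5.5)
Your proposal is correct and follows the paper's own proof. Like the paper, you reduce to a Hölder coefficient with $q=+\infty$ via Proposition \ref{prop:H2impSobolev}, apply Theorem \ref{theo:1} with $q'=r$ and $\lambda=d-2\beta$, then apply Theorem \ref{theo:2} with $q''=r$ so that $\lambda+2\beta=d$, and conclude with $L_C^{2,d,r}\sim L^r$. Your extra step of working in an intermediate $\theta$-smooth domain $\Omega_0$ is a worthwhile precision, because the paper invokes the $\theta$-smoothness-dependent results directly on $\Omega$, which the statement does not assume to be $\theta$-smooth.
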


\begin{proof} Let $\Omega'=\Omega_2\Subset\Omega_1\Subset\Omega$ be some $\theta$-smooth subdomains. As $\alpha q>d$ we stand in the situation where $a$ is Hölder continuous. More precisely, $a\in \cC^{0,\eta}\left(\ol\Omega\right)$ with $\eta<\alpha-d/q$ (see Proposition \ref{prop:H2impSobolev}). Then $a$ satisfies $(H_2)$ with new exponents $\eta<\alpha-d/q$ and $q'=+\infty$. As $F\in L^r(\Omega)\sim L^{2,d,r}(\Omega)$, we apply Theorem \ref{theo:1} to get that $\nabla u\in L^{2,\lambda,r}(\Omega_1')$ for any $\lambda\in(0,d)$. 

As $F\in L^r(\Omega)\sim L_C^{2,d,r}(\Omega)$, and choosing $\lambda$ and $\alpha'$ such that $\lambda+2\alpha'=d$, (possible if we choose $\alpha'$ small enough) we apply now Theorem \ref{theo:2} to get that $\nabla u\in L_C^{2,\lambda+2\alpha',r}(\Omega_2') = L_C^{2,d,r}(\Omega_2')\sim L^r(\Omega_2')$. 
\end{proof}

\begin{remark}
In the regime $\alpha q>d$, Proposition \ref{prop:H2impSobolev} implies
that the coefficient $a$ is H\"older continuous. Therefore, the previous
$L^r$ estimate is consistent with the classical $W^{1,r}$ regularity
theory for elliptic equations in divergence form with sufficiently regular
coefficients. The interest here is that the estimate is recovered directly
from the generalized Morrey--Campanato transfer mechanism: Theorem
\ref{theo:1} first gives a generalized Morrey estimate for $\nabla u$, and
Theorem \ref{theo:2} then upgrades it to the critical Campanato space
$L_C^{2,d,r}$, which is equivalent to $L^r$.
\end{remark}

The next corollary shows that in the regime $\alpha q>d$, Theorems~\ref{theo:1} and \ref{theo:2} yields a
local Sobolev regularity result: if $F\in W^{\alpha,q}(\Omega)$, then
$\nabla u\in W^{\alpha',q}(\Omega')$ for every $\alpha'<\alpha$.

\begin{corollary}[Sobolev regularity for $\alpha q>d$] If the coefficient $a$ satisfies $(H_1)$ and $(H_2)$ with $\alpha q>d$ and $q\geq 2$ in $\Omega$, for $\Omega'\Subset \Omega$ being a $\theta$-smooth subdomain, we have
\begin{equation}\nonumber
 F\in W^{\alpha,q}(\Omega)\quad\imp\quad \nabla u\in W^{\alpha',q}(\Omega'),\quad \forall \alpha'<\alpha.  
\end{equation}
\end{corollary}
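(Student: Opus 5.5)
The plan is to bootstrap through Theorems \ref{theo:1} and \ref{theo:2}, as in the proof of Corollary \ref{cor:Holder}, and to finish with the embedding of Proposition \ref{prop:injcamp} point 4. I fix intermediate $\theta$-smooth subdomains $\Omega'=\Omega_3\Subset\Omega_2\Subset\Omega_1\Subset\Omega$. If $\alpha=1$, I first replace $\alpha$ by some $\tilde\alpha<1$ with $\tilde\alpha q>d$ and $\tilde\alpha>\alpha'$. This is legitimate: $(H_2)$ with $\alpha$ implies $(H_2)$ with $\tilde\alpha$, since $|x-y|^\alpha\le(\diam\Omega)^{\alpha-\tilde\alpha}|x-y|^{\tilde\alpha}$, and $W^{\alpha,q}\inj W^{\tilde\alpha,q}$. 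So I may assume $\alpha<1$.

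\emph{Preliminary regularity of $a$ and $F$.} Since $\alpha q>d$, Proposition \ref{prop:H2impSobolev} gives $a\in\cC^{0,\eta}(\ol\Omega)$ for every $\eta<\alpha-d/q$. Hence $a$ also satisfies $(H_2)$ with exponents $\eta$ and $+\infty$. By the fractional Sobolev embedding, $F\in W^{\alpha,q}(\Omega)\inj\cC^{0,\eta}(\ol\Omega)$ for the same range of $\eta$. Since $q\ge 2$, Proposition \ref{prop:injcamp} point 6 with $p=2$ gives $F\in L_C^{2,d+2\alpha,q}(\Omega)$. I fix one small $\eta>0$ and, as in Corollary \ref{cor:Holder}, record
$$F\in\cC^{0,\eta}(\ol\Omega)\sim L_C^{2,d+2\eta,\infty}(\Omega)\inj L_C^{2,d-\eta,\infty}(\Omega)\sim L^{2,d-\eta,\infty}(\Omega).$$

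\emph{Bootstrap.} First I apply Theorem \ref{theo:1} with the Hölder data $(\eta,\infty)$, $\lambda=d-\eta$ and $q'=\infty$; this is admissible because $\lambda\le 2\eta q$ is trivial when $q=\infty$. It gives $\nabla u\in L^{2,d-\eta,\infty}(\Omega_1)$. Next I apply Theorem \ref{theo:2} in $\Omega_1$, again with the data $(\eta,\infty)$, $\lambda=d-\eta$ and $F\in L_C^{2,d+\eta,\infty}$. This yields $\nabla u\in L_C^{2,d+\eta,\infty}(\Omega_2)\sim\cC^{0,\eta/2}(\ol{\Omega_2})$, so $\nabla u\in L^\infty(\Omega_2)\inj L^{2,d,\infty}(\Omega_2)$. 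Finally I apply Theorem \ref{theo:2} in $\Omega_2$ with the \emph{original} exponents $(\alpha,q)$, taking $\lambda=d$, $q'=\infty$ and $q''=q$. The Hölder condition $1/q''\ge 1/q+1/q'$ holds with equality. Together with $F\in L_C^{2,d+2\alpha,q}(\Omega)$, this gives $\nabla u\in L_C^{2,d+2\alpha,q}(\Omega_3)$.

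\emph{Conclusion and main difficulty.} Proposition \ref{prop:injcamp} point 4 with $p=2$ (using $q<\infty$, which holds because the case $q=\infty$ is the Schauder case and $\alpha q>d$ then forces nothing new) gives $L_C^{2,d+2\alpha,q}(\Omega_3)\inj W^{\alpha',q}(\Omega_3)$ for all $\alpha'<\alpha$, which is the claim. The step I expect to be the main obstacle is reaching the endpoint $\lambda=d$ in the last application. Theorem \ref{theo:1} only delivers $\lambda<d$, so an intermediate Hölder upgrade is needed to obtain $\nabla u\in L^\infty$. That upgrade relies on $\alpha q>d$ twice: it makes both $a$ and $F$ Hölder continuous. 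A further point to check is that the domain of the Campanato spaces shrinks at each step, so the theorems must be applied with $\Omega_{i}$ in the role of $\Omega$. This requires the intermediate domains to be $\theta$-smooth so that the embeddings, including the Campanato--Hölder equivalence, remain available.
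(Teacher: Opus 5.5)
Your proposal is correct and follows essentially the paper's route. The paper gets $\nabla u\in L^\infty\sim L^{2,d,\infty}$ on an intermediate subdomain by citing Corollary~\ref{cor:Holder} with a Hölder exponent $\eta<\alpha-d/q$, whereas you inline only the first two steps of that argument; both then apply Theorem~\ref{theo:2} with the original $(\alpha,q)$, $\lambda=d$, $q'=\infty$, $q''=q$, and conclude with point 4 of Proposition~\ref{prop:injcamp}. Your preliminary reduction to $\alpha<1$ adds care the paper's proof lacks: with $\lambda=d$ the iteration lemma needs $\lambda+2\alpha<d+2$, and points 4 and 6 of Proposition~\ref{prop:injcamp} require $\alpha<1$.
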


\begin{proof} Let $\Omega'=\Omega_2\Subset\Omega_1\Subset\Omega$ be some $\theta$-smooth subdomains.  As $\alpha q>d$ we stand in the situation where $a$ is Hölder continuous. More precisely, $a\in \cC^{0,\eta}\left(\ol\Omega\right)$ with $\eta<\alpha-d/q$ (see Proposition \ref{prop:H2impSobolev}). Moreover, as $F\in W^{\alpha,q}(\Omega)$, we also have $F\in \cC^{0,\eta}\left(\ol\Omega\right)$. Using Corollary \ref{cor:Holder}, we get that $\nabla u\in \cC^{0,\eta}\big(\ol{\Omega_1'}\big)$ for $\Omega_1'\Subset\Omega$ being $\theta$-smooth. Hence $\nabla u\in L^\infty(\Omega_1') \sim L^{2,d,\infty}(\Omega_1)$. 

As $F\in W^{\alpha,q}(\Omega)\inj L_C^{2,d+2\alpha,q}(\Omega)$ as $q\geq 2$ and $\nabla u\in  L^{2,d,\infty}(\Omega_1)$ we apply Theorem \ref{theo:2} in $\Omega_1'$ with $\lambda=d$ to get that $\nabla u\in L_C^{2,d+2\alpha,q}(\Omega_2')\inj W^{\alpha',q}(\Omega_2')$ for any $\alpha'<\alpha$. 
\end{proof}

The previous corollary can be extended to arbitrary Sobolev source term $F$. More precisely, if $F\in W^{\beta,r}(\Omega)$, then the regularity of $\nabla u$ is determined by the interplay between the regularity of the coefficient and that of the source term.

\begin{corollary}[General Sobolev regularity for $\alpha q>d$] If the coefficient $a$ satisfies $(H_1)$ and $(H_2)$ with $\alpha q> d$ and $q\geq 2$ in $\Omega$, and if $F\in W^{\beta,r}(\Omega)$ with $r\geq 2$ and $\beta>0$, call
\begin{equation}\nonumber
\beta_0:=\min(\alpha,\beta),\quad r':=
\left\{\begin{aligned}
&\frac{d r}{d-r(\beta-\beta_0)}\quad &&\text{if } r(\beta-\beta_0)<d,\\
&+\infty\quad &&\text{else} \hfill
\end{aligned}\right.
\quad\text{and}\quad r_0:=\min\left(r',\frac{\alpha q}{\beta_0}\right),
\end{equation}
then for $\Omega'\Subset \Omega$ being a $\theta$-smooth subdomain,  the following holds:  
\begin{itemize}

\item If $\beta r>d$ we have $\nabla u\in W^{\beta',r_0}(\Omega')$ for any $\beta' <\beta_0$.

\item If $\beta r=d$ we have $\nabla u\in W^{\beta',\rho}(\Omega')$ for any $\beta' <\beta_0$ and $\rho<r_0$.

\item If $\beta r<d$ we have $\nabla u\in W^{\beta',r'}(\Omega')$ for any $\beta' <\beta_0$.

\end{itemize}

\end{corollary}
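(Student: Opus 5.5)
The plan follows the two previous corollaries: first obtain a uniform bound on $\nabla u$, then apply Theorem \ref{theo:2} once with $\lambda=d$, and finally pass from Campanato to Sobolev regularity by Proposition \ref{prop:injcamp} point 4. Fix intermediate $\theta$-smooth subdomains $\Omega'=\Omega_2\Subset\Omega_1\Subset\Omega$. Since $\alpha q>d$, Proposition \ref{prop:H2impSobolev} gives $a\in\cC^{0,\eta}(\ol\Omega)$ for some small $\eta>0$.

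\emph{Step 1: regularity of $F$.} We have $F\in W^{\beta,r}(\Omega)\inj W^{\beta_0,r'}(\Omega)$ (up to an $\varepsilon$-loss in the exponent when $r(\beta-\beta_0)=d$). This is the fractional Sobolev embedding that trades the excess smoothness $\beta-\beta_0$ for integrability. The three cases in the statement come from whether $F$ is bounded.

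If $\beta r>d$, then $F\in\cC^{0,\eta'}$ for some $\eta'>0$. Corollary \ref{cor:Holder}, applied with exponent $\min(\eta,\eta')$, then gives $\nabla u\in L^\infty(\Omega_1)\sim L^{2,d,\infty}(\Omega_1)$.

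If $\beta r\le d$, $F$ need not be bounded. In that case I would obtain $\nabla u\in L^{s}(\Omega_1)$ from Corollary \ref{cor:Lr}, with $s=r^*$ the Sobolev exponent of $W^{\beta,r}$ (any $s<\infty$ when $\beta r=d$). This uses $F\in L^s(\Omega)$, and $L^s\sim L^{2,d,s}$ whenever $s>2$.

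\emph{Step 2: one application of Theorem \ref{theo:2}.} Take $\lambda=d$, with $q'=\infty$ or $q'=s$ according to Step 1. For the source term, Proposition \ref{prop:injcamp} point 6 gives $F\in W^{\beta_0,\tilde r}\inj L_C^{2,d+2\beta_0,\tilde r}$ for $2\le\tilde r$, where $\tilde r\le r'$ is the integrability available for $F$.

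The coefficient must enter with the same gain $\beta_0$, not $\alpha$. By Proposition \ref{prop:H2impH2}, $a$ satisfies $(H_2)$ with exponents $\beta_0$ and $\alpha q/\beta_0$, since $\beta_0\le\alpha$ and $\beta_0\cdot(\alpha q/\beta_0)=\alpha q$. Theorem \ref{theo:2} then yields $\nabla u\in L_C^{2,d+2\beta_0,q''}(\Omega_2)$ whenever
$$q''\le\min\left(\tilde r,\ \Big(\frac{\beta_0}{\alpha q}+\frac1{q'}\Big)^{-1}\right).$$
With $q'=\infty$ this is exactly $\min(r',\alpha q/\beta_0)=r_0$. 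Proposition \ref{prop:injcamp} point 4 then gives $\nabla u\in W^{\beta',q''}(\Omega_2)$ for every $\beta'<\beta_0$. This settles the case $\beta r>d$ completely.

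\emph{Main obstacle: the case $\beta r\le d$.} Here only $q'=s<\infty$ is available, which lowers $q''$. The key fact is $r^*=r'$ with $\beta_0=\beta$ replaced by $0$: indeed $1/r^*=1/r-\beta/d\le1/r'$. So I would balance exponents. Writing $1/q''=\beta_0/(\alpha q)+1/s$ compared with $1/r'$ requires checking $\beta_0/(\alpha q)<\beta_0/d$, which is exactly $\alpha q>d$. Therefore, choosing $s$ large enough (when $\beta r=d$) or using $s=r^*$, which is at least $r'$ (when $\beta r<d$), together with an intermediate bootstrap through Theorem \ref{theo:2} using $\lambda<d$ and Theorem \ref{theo:1}, should reach $q''=r'$ when $\beta r<d$ and any $\rho<r_0$ when $\beta r=d$.

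I expect this exponent bookkeeping, in particular justifying $q''=r'$ rather than something smaller, to be the delicate step. If it fails directly, the first thing to try is iterating Theorem \ref{theo:2} with successive smaller gains.
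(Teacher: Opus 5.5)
Your proposal is correct and follows essentially the same route as the paper: lower $(H_2)$ to exponents $(\beta_0,\alpha q/\beta_0)$ via Proposition~\ref{prop:H2impH2}, obtain $\nabla u\in L^\infty(\Omega_1)$ from Corollary~\ref{cor:Holder} (case $\beta r>d$) or $\nabla u\in L^{s}(\Omega_1)$ from Corollary~\ref{cor:Lr} (case $\beta r\le d$), and then apply Theorem~\ref{theo:2} once with $\lambda=d$. The bootstrap through Theorem~\ref{theo:1} with $\lambda<d$ that you contemplate is not needed: the inequality you already wrote gives $\frac1{r'}-\frac{\beta_0}{\alpha q}-\frac1{r^*}=\beta_0\bigl(\frac1d-\frac1{\alpha q}\bigr)>0$, so $q''=r'$ is directly admissible in Theorem~\ref{theo:2} when $\beta r<d$, and this is exactly the paper's final computation.
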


\begin{proof} Let $\Omega'=\Omega_2\Subset\Omega_1\Subset\Omega$ be some $\theta$-smooth subdomains. As $\alpha q>d$ we stand in the situation where $a$ is Hölder. More precisely, $a\in \cC^{0,\eta}\left(\ol\Omega\right)$ with $\eta<\alpha-d/q$ (see Proposition \ref{prop:H2impSobolev}). Define now $\beta_0:=\min(\alpha,\beta)$, from Proposition \ref{prop:H2impH2} the coefficient  $a$ satisfies $(H_2)$ with exponents $\beta_0$ and $q'=\alpha q/\beta_0\geq q\geq 2$. Moreover, by Sobolev embedding, $F\in W^{\beta_0,r'}(\Omega_1)$. 

Case $\beta r>d$: In this case, $F$ is also Hölder in $\Omega$ and then, from Corollary \ref{cor:Holder}, we get that $\nabla u$ is Holder in $\Omega_1$ and then $\nabla u\in L^\infty(\Omega_1)\sim L^{2,d,\infty}(\Omega_1)$.  As the paremeter $a$ satisfies $(H_2)$ with exponents $\beta_0$ and $q'\geq r_0$, $F\in L_C^{2,d+2\beta_0,r'}(\Omega_1)\inj L_C^{2,d+2\beta_0,r_0}(\Omega_1)$ and $\nabla u\in L^{2,d,\infty}(\Omega_1)$, we apply Theorem \ref{theo:2} to get that $\nabla u\in L_C^{2,d+2\beta_0,r_0}(\Omega_2)\inj W^{\beta',r_0}(\Omega_2)$ for all $\beta'< \beta_0$. 

Case $\beta r=d$: In this case $F\in L^{s}(\Omega)$ for any $s\in [1,+\infty)$ by Sobolev embedding, and then from Corollary \ref{cor:Lr}, $\nabla u\in L^s(\Omega_1)\sim L^{2,d,s}(\Omega_1)$ for any $2\leq s<+\infty$. As the paremeter $a$ satisfies $(H_2)$ with exponents $\beta_0$ and $r_0$, $F\in L_C^{2,d+2\beta_0,r_0}(\Omega_1)$ and $\nabla u\in L^{2,d,s}(\Omega_1)$, we apply Theorem \ref{theo:2} to get that $\nabla u\in L_C^{2,d+2\beta_0,r''}(\Omega_2)\inj W^{\beta',r''}(\Omega_2)$ with $r''=r_0s/(s+r_0)$ for all $\beta'< \beta_0$ and all $s\geq 2$.   

Case $\beta r<d$: In this case $F\in L^{s}(\Omega)$ for $s=(1/r-\beta/d)^{-1}$. With the same argumentation than in the previous case, we get $\nabla u\in L^{2,d,s}(\Omega_1)$. Now as the coefficient  $a$ satisfies $(H_2)$ with exponents $\beta_0$ and $q'$, we call $r''=(1/q'+1/s)^{-1}$. We remark that $r'< r''$, indeed
\begin{equation}\nonumber
\frac{1}{r'} - \frac{1}{r''} = \frac{1}{r} - \frac{\beta-\beta_0}{d} - \frac{1}{q'}-\frac{1}r+\frac{\beta}{d} =    \frac{\beta_0}{d} - \frac{1}{q'} =   \beta_0\left(\frac{1}{d} - \frac{1}{\alpha q}\right)>0. 
\end{equation}
As $F\in W^{\beta_0,r'}(\Omega_1)\inj L_C^{2,d+2\beta_0,r'}(\Omega_1)$, with $r'<r''$ we get by Theorem \ref{theo:2} that $\nabla u\in L_C^{2,d+2\beta_0,r'}(\Omega_2)\inj W^{\beta',r'}(\Omega_2)$. 
\end{proof}

\begin{remark}
The exponent $r'$ corresponds to the Sobolev embedding
$W^{\beta,r}(\Omega)\hookrightarrow W^{\beta_0,r'}(\Omega)$.
Consequently, the regularity index $\beta_0=\min(\alpha,\beta)$ reflects
the fact that the regularity transferred to $\nabla u$ cannot exceed either
the regularity of the source term or the oscillation exponent of the
coefficient.
\end{remark}

The next result is perhaps the most interesting consequence of the present work, since it applies in
the regime $\alpha q\in(d/2,d]$, where the coefficient $a$ may not to be
continuous. In this range, one can still obtain a fractional Sobolev
regularity estimate for $\nabla u$. As expected, this regularity remains
low, and the resulting Sobolev regularity do not yield any $L^r$ estimate
with $r>2$ for $\nabla u$.

\begin{corollary}[Sobolev estimate for $d/2<\alpha q \leq d$]\label{cor:low} If the coefficient $a$ satisfies $(H_1)$ and $(H_2)$ with $\alpha q \in(d/2,d]$ and $q\geq 2$ in $\Omega$, for $\Omega'\Subset \Omega$ being a $\theta$-smooth subdomain and $F\in W^{\alpha,q}(\Omega)$ we have
\begin{equation}\nonumber
\nabla u\in W^{\beta',\frac{2\alpha q}{d+2\beta}}(\Omega_2'),\quad \forall \beta\in(0,\alpha)\cap [\beta_{\min},\beta_{\max}],\quad\forall \beta'<\beta, 
\end{equation} 
where 
\begin{equation}\nonumber
\beta_{\min}:=\frac{3\alpha -d}{2}\qquad\text{and}\qquad \beta_{\max}:=\alpha q-\frac{d}{2}. 
\end{equation}
\end{corollary}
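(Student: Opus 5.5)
The plan is a single pass through Theorem \ref{theo:1} followed by a single application of Theorem \ref{theo:2}. In the second step I would \emph{not} use the critical value $\lambda=d$. That route requires $\nabla u\in L^{2,d,q'}\sim L^{q'}$ with $q'=2\alpha q/d\le 2$ when $\alpha q\le d$, and point 7 of Proposition \ref{prop:injcamp} is then unavailable. Instead I would keep a subcritical Morrey exponent $\lambda<d$ and let Theorem \ref{theo:2} jump directly above $d$. Throughout, let $\Omega'=\Omega_2'\Subset\Omega_1\Subset\Omega$ be $\theta$-smooth, fix $\beta\in(0,\alpha)\cap[\beta_{\min},\beta_{\max}]$, and choose $\lambda\in(0,d)$ and an auxiliary oscillation exponent
\[
\delta:=\frac{d-\lambda}{2}+\beta ,
\]
so that $\lambda+2\delta=d+2\beta$.

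\textbf{Step 1 (Morrey estimate).} I would apply Theorem \ref{theo:1} with the original exponents $(\alpha,q)$ and $q_1:=2\alpha q/\lambda$; this needs $\lambda\le 2\alpha q$, which holds since $\lambda<d<2\alpha q$. To check $F\in L^{2,\lambda,q_1}(\Omega)$ I would argue as follows.
\begin{itemize}
\item By the Sobolev embedding, $F\in W^{\alpha,q}\inj L^{s}$ with $s=dq/(d-\alpha q)$, or with every finite $s$ if $\alpha q=d$.
\item Proposition \ref{prop:injmorrey} gives $L^s\sim L^{2,d,s}$, using $s>2$.
\item Point 4 of Proposition \ref{prop:E} with $p=p'=2$ gives $E^{2,\lambda}\le CE^{2,d}$.
\end{itemize}
Hence $F\in L^{2,\lambda,s}\inj L^{2,\lambda,q_1}$ provided $q_1\le s$, that is $\lambda\ge 2\alpha(d-\alpha q)/d$. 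The output is $\nabla u\in L^{2,\lambda,q_1}(\Omega_1)$.

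\textbf{Step 2 (Campanato estimate).} Next I would use Proposition \ref{prop:H2impH2} to see that $a$ satisfies $(H_2)$ with exponents $\delta$ and $q_\delta:=\alpha q/\delta$; this needs $0<\delta\le\alpha$ and gives $q_\delta\ge q$. I would then apply Theorem \ref{theo:2} on $\Omega_1$ with Morrey exponent $\lambda$, integrability $q_1$, and
\[
\frac1{q''}=\frac{\delta}{\alpha q}+\frac{\lambda}{2\alpha q}=\frac{d+2\beta}{2\alpha q}.
\]
This is exactly the exponent $2\alpha q/(d+2\beta)$ in the statement, and the choice of $\delta$ is what makes it independent of $\lambda$. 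The source term is handled by point 6 of Proposition \ref{prop:injcamp} together with point 2 of the same proposition (with $p=p'=2$):
\[
F\in W^{\alpha,q}\inj W^{\beta,q}\inj L_C^{2,d+2\beta,q}\inj L_C^{2,d+2\beta,q''}.
\]
This uses $q\ge2$ and $q''\le q$, the latter being $2\alpha\le d+2\beta$. Theorem \ref{theo:2} then gives $\nabla u\in L_C^{2,d+2\beta,q''}(\Omega_2')$. Point 4 of Proposition \ref{prop:injcamp} (with $p=2$, where $d+2\beta=d+\beta p$) converts this into $\nabla u\in W^{\beta',q''}(\Omega_2')$ for every $\beta'<\beta$. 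It also requires $q''\ge1$, which is $d+2\beta\le 2\alpha q$, i.e. $\beta\le\beta_{\max}$.

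\textbf{Main obstacle.} The delicate part is showing that the constraints on $\lambda$ are simultaneously satisfiable exactly when $\beta$ lies in the stated range. These constraints are:
\begin{itemize}
\item $\lambda<d$;
\item $\lambda\ge 2\alpha(d-\alpha q)/d$, from Step 1;
\item $\delta\le\alpha$, i.e. $\lambda\ge d+2\beta-2\alpha$;
\item $\lambda\le 2\alpha q$ and $q_1\ge1$.
\end{itemize}
Some of the hypotheses of Theorems \ref{theo:1} and \ref{theo:2} may be hiding an extra lower bound, and I expect $\beta_{\min}=(3\alpha-d)/2$ to arise from one of these comparisons once they are combined with $\alpha q>d/2$. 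So I would first write every condition as a linear inequality in $(\lambda,\beta)$ and check the feasibility region directly. If the Step 1 embedding turns out to be too lossy, the fallback is to choose $\lambda$ as small as allowed and to re-derive the Morrey integrability of $F$ through Proposition \ref{prop:injcamp}, points 2 and 3, instead of the Lebesgue embedding.
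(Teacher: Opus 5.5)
Your proof is correct, and it takes a genuinely different and sharper route than the paper's.

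\textbf{What the paper does.} It applies Theorem \ref{theo:2} with the original exponents $(\alpha,q)$. In your notation this is the choice $\delta=\alpha$, which forces $\lambda=d+2\beta-2\alpha$. It also checks the Morrey integrability of $F$ needed in Theorem \ref{theo:1} through the cruder sufficient condition $\lambda\ge\alpha$, rather than your exact condition $\lambda\ge 2\alpha(d-\alpha q)/d$. Under the coupling $\lambda=d+2\beta-2\alpha$, the condition $\lambda\ge\alpha$ is exactly $\beta\ge\beta_{\min}$. So the lower bound in the statement comes from the paper's route, not from the problem itself.

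\textbf{What you do differently.} You use Proposition \ref{prop:H2impH2} to trade the exponent $\alpha$ for $\delta\le\alpha$ at integrability $\alpha q/\delta$; this is applicable since $q\le d/\alpha<\infty$. The point is that the H\"older pairing in Theorem \ref{theo:2} only sees $\lambda+2\delta$. Hence $q''$ does not depend on the split, and $\lambda$ becomes a free parameter.

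\textbf{Closing your \emph{main obstacle}.} It is not an obstacle, and you should close it rather than expect $\beta_{\min}$ to appear.
\begin{itemize}
\item Both lower bounds on $\lambda$ lie strictly below $d$. First, $2\alpha(d-\alpha q)/d<\alpha\le d$, which is equivalent to $\alpha q>d/2$. Second, $d+2\beta-2\alpha<d$, which is equivalent to $\beta<\alpha$. So every $\lambda<d$ close enough to $d$ satisfies all your constraints.
\item Positivity $\lambda>0$ is automatic, because $d+2\beta-2\alpha>d-2\alpha\ge 0$.
\item The condition $q''\le q$, i.e.\ $2\alpha\le d+2\beta$, also holds automatically, because $\alpha\le d/q\le d/2$.
\end{itemize}
The only remaining restrictions are $0<\beta<\alpha$ and $\beta\le\beta_{\max}$, the latter coming from $q''\ge 1$. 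Your argument therefore gives the conclusion on a range of $\beta$ that contains the stated one, and the fallback you mention is unnecessary.

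\textbf{Comparison.} The paper's route buys brevity, since it never re-indexes $(H_2)$. Yours costs one application of Proposition \ref{prop:H2impH2} and a short feasibility check, and in exchange removes the artificial constraint $\beta\ge\beta_{\min}$.
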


\begin{proof} Let $\Omega'=\Omega_2\Subset\Omega_1\Subset\Omega$ be some $\theta$-smooth subdomains. Let $\lambda\in (0,d)$. As $\alpha q\geq d/2$ we have $\lambda< 2\alpha q$. As $F\in W^{\alpha,q}(\Omega)\inj L^{q'}(\Omega)$ with $q'=q d/(d-\alpha q)>q\geq 2$, we have $F\in L^{2,d,q'}(\Omega)\inj  L^{2,\lambda,q'}(\Omega)$. If one choose $\lambda\geq \alpha$ then $q'\geq 2\alpha q/\lambda$. Indeed
\begin{equation}\nonumber
\begin{aligned}
\frac{\lambda}{2\alpha q} - \frac{1}{q'} \geq \frac{1}{2q} - \frac{1}{q} + \frac{\alpha}{d} =  \frac{\alpha}{d} - \frac{1}{2q} = \frac{2\alpha q-d}{2dq}\geq 0,
\end{aligned}
\end{equation}
and hence, $F\in L^{2,\lambda,2\alpha q/\lambda }(\Omega)$. Applying now Theorem \ref{theo:1}, we get that $\nabla u\in L^{2,\lambda,2\alpha q/\lambda }(\Omega_1')$. 

Call now $q''=(1/q+\lambda/(2\alpha q))^{-1} = 2\alpha q/(\lambda+2\alpha)$, as $F\in W^{\alpha,q}(\Omega)\inj L_C^{2,d+2\alpha,q}(\Omega)$, with $q>q''$, we have $F\in L_C^{2,\lambda+2\alpha,q''}(\Omega)$ (from gneralized Camapanto embedding rules) and therefore, using Theorem \ref{theo:2}, we get $\nabla u\in L_C^{2,\lambda+2\alpha,q''}(\Omega_2')$. This is valid for any $\lambda\in [\alpha,d)$. In order to have $q''\geq 1$ we need also to impose $\lambda\leq 2\alpha(q-1)$. 

Define now $\beta:=\alpha+(\lambda-d)/2$ in order to have $\lambda+2\alpha = d+2\beta$. Hence 
\begin{equation}\nonumber
\begin{aligned}
\nabla u\in L_C^{2,d+2\beta,\frac{2\alpha q}{d+2\beta}}(\Omega_2')
\end{aligned}
\end{equation}
The various constraints on $\lambda$ lead to constraints on $\beta$:
\begin{equation}\nonumber
\begin{aligned}
\lambda \geq \alpha\quad &\imp\quad \beta\geq (3\alpha - d)/2,\\
\lambda <d \quad &\imp\quad \beta <\alpha,\\
\lambda \leq 2\alpha(q-1) \quad &\imp\quad \beta \leq \alpha q- d/2,
\end{aligned}
\end{equation} 
and we add the constraint $\beta>0$ in order to get an injection in a Sobolev space. Hence for $\beta$ satisfiying all these constraints we get 
\begin{equation}\nonumber
\nabla u\in W^{\beta',\frac{2\alpha q}{d+2\beta}}(\Omega_2'),\quad \forall \beta'<\beta. 
\end{equation}
\end{proof}

\begin{remark} Unlike the previous corollaries, this result applies in a regime where the
coefficient $a$ is not expected to be Hölder continuous and may even be
discontinuous as $\alpha q\le d$. In particular, neither Schauder theory
nor the $L^r$ estimates of Corollary~\ref{cor:Lr} are available. Nevertheless,
one still obtains a positive fractional Sobolev regularity for $\nabla u$.
\end{remark}

\section*{Acknowledgement}
The author acknowledge support from the French National Research Agency (ANR) under
grants ANR-22-CE40-0005 (project REWARD).

\begin{appendix}

\section{Local mean function and $(p,\lambda)$--oscillation}\label{app:localmean}

From the Lebesgue differentiation Theorem, we already know that for any $u\in L^p(\Omega)$, the sequence of the local means $(u_r)_{r>0}$ converges to $u$ point-wise almost everywhere in $\Omega$ as $r\to 0$. This section is dedicated to the link between the local mean function and the $(p,\lambda)$--oscillation of the function $u$. Throughout this section, we assume that $\Omega$ is a $\theta$-smooth domain, and we fix $p\in[1,+\infty)$ and $\lambda\geq 0$.

\begin{lemma}\label{lem:1} For any $u\in L^p(\Omega)$ and $0<r_1\leq r_2< +\infty$ we have
\begin{equation}\nonumber
|u_{r_1}(x)-u_{r_2}(x)|\leq C_1\left(\frac{r_1^\lambda+r_2^\lambda}{r_1^d}\right)^{1/p}O_\Omega^{p,\lambda}[u](x),\quad \forall x\in\Omega. 
\end{equation} 
where $C_1:=C_1(d,\theta,p) := (2^{p-1}/(\theta v_d))^{1/p}$.
\end{lemma}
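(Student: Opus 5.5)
The idea is to compare the two local means through the function $u-u_{r_2}(x)$ on the smaller set $B_1:=B_\Omega(x,r_1)\subset B_2:=B_\Omega(x,r_2)$. Since $u_{r_1}(x)-u_{r_2}(x)$ is a constant, we have for every $y\in B_1$
\begin{equation}\nonumber
u_{r_1}(x)-u_{r_2}(x) = \big(u_{r_1}(x)-u(y)\big)+\big(u(y)-u_{r_2}(x)\big).
\end{equation}
We then take the $p$-th power and average over $y\in B_1$. The convexity bound $|s+t|^p\le 2^{p-1}(|s|^p+|t|^p)$ gives
\begin{equation}\nonumber
|u_{r_1}(x)-u_{r_2}(x)|^p\le \frac{2^{p-1}}{|B_1|}\left(\int_{B_1}|u-u_{r_1}(x)|^p+\int_{B_1}|u-u_{r_2}(x)|^p\right).
\end{equation}

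Next we bound each integral by the definition of $O_\Omega^{p,\lambda}[u](x)$. The first integral is at most $r_1^\lambda\,O_\Omega^{p,\lambda}[u](x)^p$. For the second, we enlarge the domain of integration from $B_1$ to $B_2$, which bounds it by $\int_{B_2}|u-u_{r_2}(x)|^p\le r_2^\lambda\, O_\Omega^{p,\lambda}[u](x)^p$. This yields
\begin{equation}\nonumber
|u_{r_1}(x)-u_{r_2}(x)|^p\le \frac{2^{p-1}}{|B_1|}(r_1^\lambda+r_2^\lambda)\,O_\Omega^{p,\lambda}[u](x)^p.
\end{equation}

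The remaining step is the lower bound $|B_1|\ge \theta v_d r_1^d$, which comes from $\theta$-smoothness. This is the only delicate point, because the definition only guarantees it for $r_1<\diam\Omega$. If $r_1\ge\diam\Omega$, then $B_1=B_2=\Omega$, so the left-hand side vanishes and the inequality is trivial. Hence we may assume $r_1<\diam\Omega$, apply the bound, and take $p$-th roots to obtain the claim with $C_1=(2^{p-1}/(\theta v_d))^{1/p}$.

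If $O_\Omega^{p,\lambda}[u](x)=+\infty$ there is nothing to prove, so the statement holds for every $x\in\Omega$ as written.
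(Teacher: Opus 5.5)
Your proof is correct and follows the paper's argument essentially verbatim: you apply the convexity inequality pointwise, integrate over $B_\Omega(x,r_1)$, enlarge the second integral to $B_\Omega(x,r_2)$, and use the $\theta$-smoothness lower bound $|B_\Omega(x,r_1)|\ge\theta v_d r_1^d$. Your separate treatment of the case $r_1\ge\diam\Omega$ is a welcome extra precision: there $B_\Omega(x,r_1)=B_\Omega(x,r_2)=\Omega$, so the two means coincide. The paper instead applies the volume lower bound without checking that $r_1$ lies in the range where $\theta$-smoothness guarantees it.
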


\begin{proof} By convexity, for any $x\in\Omega$,
\begin{equation}\nonumber
\begin{aligned}
|u_{r_1}(x)-u_{r_2}(x)|^p &\leq2^{p-1}\left(|u(z) -u_{r_1}(x)|^p+|u(z) -u_{r_2}(x)|^p\right),\quad\forall z\in\Omega,\\
\int_{B_\Omega(x,r_1)}\td z |u_{r_1}(x)-u_{r_2}(x)|^p &\leq2^{p-1}\left(\int_{B_\Omega(x,r_1)}|u(z) -u_{r_1}(x)|^p\td z+\int_{B_\Omega(x,r_1)}|u(z) -u_{r_2}(x)|^p\td z\right),\\
|B_\Omega(x,r_1)||u_{r_1}(x)-u_{r_2}(x)|^p &\leq2^{p-1}\left(\int_{B_\Omega(x,r_1)}|u(z) -u_{r_1}(x)|^p\td z+\int_{B_\Omega(x,r_2)}|u(z) -u_{r_2}(x)|^p\td z\right),\\
\theta v_dr_1^d|u_{r_1}(x)-u_{r_2}(x)|^p  &\leq 2^{p-1}\left(r_1^{\lambda}\left(O_\Omega^{p,\lambda}[u](x)\right)^p+r_2^{\lambda}\left(O_\Omega^{p,\lambda}[u](x)\right)^p\right),\\
&\leq 2^{p-1}\left(r_1^{\lambda}+r_2^{\lambda}\right)\left(O_\Omega^{p,\lambda}[u](x)\right)^p. 
\end{aligned}\end{equation}
We conclude by applying the power $1/p$.
\end{proof}

\begin{lemma}\label{lem:2} For any $u\in L^p(\Omega)$, $r>0$ and $n\in\N$, if $\lambda\neq d$ we have
\begin{equation}\nonumber
|u_{r}(x)-u_{\frac{r}{2^n}}(x)|\leq C_2r^{(\lambda-d)/p}\frac{1-\sigma^n}{1-\sigma}O_\Omega^{p,\lambda}[u](x),\quad \forall x\in\Omega,
\end{equation}
where $C_2=C_2(d,\theta,p,\lambda)$ and $\sigma:=2^{(d-\lambda)/p}$.
\end{lemma}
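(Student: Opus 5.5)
The plan is a telescoping argument over the dyadic radii $r, r/2, \dots, r/2^n$, applying Lemma \ref{lem:1} to each consecutive pair and summing a geometric series. Fix $x\in\Omega$, $r>0$ and $n\in\N$. By the triangle inequality,
\begin{equation}\nonumber
|u_{r}(x)-u_{r/2^n}(x)|\leq \sum_{k=0}^{n-1}|u_{r/2^{k}}(x)-u_{r/2^{k+1}}(x)|.
\end{equation}

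For each term I apply Lemma \ref{lem:1} with $r_1=r/2^{k+1}\leq r_2=r/2^k$. This gives
\begin{equation}\nonumber
|u_{r/2^{k+1}}(x)-u_{r/2^{k}}(x)|\leq C_1\left(\frac{(r/2^{k+1})^\lambda+(r/2^k)^\lambda}{(r/2^{k+1})^d}\right)^{1/p}O_\Omega^{p,\lambda}[u](x).
\end{equation}
Since $\lambda\geq0$, the numerator is at most $2(r/2^k)^\lambda$. The quotient is therefore bounded by $2\cdot 2^{d}\, r^{\lambda-d}\, 2^{k(d-\lambda)}$. Taking the $1/p$ power, the $k$-th term is at most
\begin{equation}\nonumber
C_1 2^{(1+d)/p}\, r^{(\lambda-d)/p}\,\sigma^k\, O_\Omega^{p,\lambda}[u](x),
\end{equation}
with $\sigma=2^{(d-\lambda)/p}$.

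Summing over $k=0,\dots,n-1$ gives the factor $\sum_{k=0}^{n-1}\sigma^k=(1-\sigma^n)/(1-\sigma)$. This formula is valid exactly because $\lambda\neq d$, which guarantees $\sigma\neq1$. This yields the claim with $C_2:=C_1 2^{(1+d)/p}$, which depends only on $d,\theta,p$ (and trivially on $\lambda$).

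There is no real obstacle here. The only point needing care is bounding $r_1^\lambda+r_2^\lambda$ by $2r_2^\lambda$, which uses $\lambda\geq 0$, and keeping track of the powers of $2$ so that the ratio comes out as exactly $\sigma^k$. Note that the expression $(1-\sigma^n)/(1-\sigma)$ is positive in both regimes $\sigma>1$ and $\sigma<1$, so no case split is needed.
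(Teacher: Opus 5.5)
Your proof is correct and follows the paper's argument: you telescope over the dyadic radii $r/2^k$, bound each increment with Lemma~\ref{lem:1}, and sum the geometric series in $\sigma$, which is legitimate since $\lambda\neq d$. The only difference is cosmetic: you bound $r_1^\lambda+r_2^\lambda\le 2r_2^\lambda$ to get the $\lambda$-independent constant $C_1 2^{(1+d)/p}$, whereas the paper computes the quotient exactly and obtains $C_1 2^{(d-\lambda)/p}(1+2^\lambda)^{1/p}$.
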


\begin{proof}Define $r_k=r/2^k$ and apply the previous Lemma, for any $x\in\Omega$,
\begin{equation}\nonumber\begin{aligned}
|u_{r_{k+1}}(x)-u_{r_{k}}(x)| &\leq Cr^{(\lambda-d)/p}\left(2^{(d-\lambda)/p}\right)^{k+1}(1+2^\lambda)^{1/p}O_\Omega^{p,\lambda}[u](x)\\
&\leq C_2r^{(\lambda-d)/p}\sigma^kO_\Omega^{p,\lambda}[u](x)\\
\end{aligned}\end{equation}
where $C_2=C_2(d,\theta,p,\lambda):=C_1(d,\theta,p)\left(2^{(d-\lambda)/p}\right)(1+2^\lambda)^{1/p}$ and $\sigma:=2^{(d-\lambda)/p}$.  Now by triangular inequality,
\begin{equation}\nonumber\begin{aligned}
|u_{r}(x)-u_{\frac{r}{2^n}}(x)| &\leq \sum_{k=0}^{n-1} |u_{r_{k+1}}(x)-u_{r_{k}}(x)|\leq  C_2r^{(\lambda-d)/p}O_\Omega^{p,\lambda}[u](x) \sum_{k=0}^{n-1}\sigma^k\\
&\leq C_2r^{(\lambda-d)/p}O_\Omega^{p,\lambda}[u](x)\frac{1-\sigma^n}{1-\sigma}. 
\end{aligned}\end{equation}
\end{proof}

\begin{proposition}\label{prop:36} For any $u\in L_C^{p,\lambda,q}(\Omega)$ with $\lambda>d$, the sequence $(u_r)_{r>0}$ converges to $u$ in $L^q(\Omega)$ when $r$ goes to zero. More precisely, we have
\begin{equation}\label{eq:ur}\begin{aligned}
|u_{r}(x)-u(x)|\leq C_2r^\alpha\frac{1}{1-\sigma}O_\Omega^{p,\lambda}[u](x),\quad\forall r>0,\quad \fae\ x\in\Omega,
\end{aligned}\end{equation}
where $\alpha:=(d-\lambda)/p$ and
\begin{equation}\nonumber
\norm{u_r-u}{L^q(\Omega)}\leq C_3r^{\alpha}\norm{u}{L_C^{p,d+\alpha p,q}(\Omega)},\quad \forall r>0,
\end{equation}
where $C_3=C_3(d,\theta,p,\lambda)>0$.
\end{proposition}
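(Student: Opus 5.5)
The plan is to let $n\to\infty$ in Lemma~\ref{lem:2} and then use the Lebesgue differentiation theorem. Since $\lambda>d$, we have $\sigma=2^{(d-\lambda)/p}\in(0,1)$, so $\frac{1-\sigma^n}{1-\sigma}\le\frac{1}{1-\sigma}$ uniformly in $n$. The exponent of $r$ produced by Lemma~\ref{lem:2} is $(\lambda-d)/p>0$, and this is the quantity I will call $\alpha$. (The statement writes $(d-\lambda)/p$, but the sign must be $(\lambda-d)/p$, consistently with the space $L_C^{p,d+\alpha p,q}$ in the second estimate.)

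\textbf{Step 1: pointwise bound.} Fix $r>0$. Lemma~\ref{lem:2} gives, for every $x\in\Omega$ and every $n\in\N$,
\[
|u_r(x)-u_{r/2^n}(x)|\le C_2\,r^{\alpha}\,\frac{1}{1-\sigma}\,O_\Omega^{p,\lambda}[u](x).
\]
Since $u\in L^p(\Omega)\subset L^1(\Omega)$ and $\Omega$ is $\theta$-smooth, the sets $B_\Omega(x,\rho)$ shrink regularly to $x$: each contains a fixed fraction $\theta$ of the ball $B(x,\rho)$. Hence the Lebesgue differentiation theorem applies, and $u_{\rho}(x)\to u(x)$ as $\rho\to0$ for a.e.\ $x$. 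Pass to the limit $n\to\infty$ along $\rho=r/2^n$. The exceptional null set does not depend on $r$, so the bound holds for all $r>0$ and a.e.\ $x$ simultaneously. Where $O_\Omega^{p,\lambda}[u](x)=+\infty$ the inequality is trivial.

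\textbf{Step 2: $L^q$ estimate.} Take the $L^q(\Omega)$-norm in $x$ of the pointwise bound:
\[
\|u_r-u\|_{L^q(\Omega)}\le \frac{C_2}{1-\sigma}\,r^\alpha\,\|O_\Omega^{p,\lambda}[u]\|_{L^q(\Omega)}.
\]
With $\lambda=d+\alpha p$, the right-hand side is bounded by $C_3 r^\alpha\|u\|_{L_C^{p,d+\alpha p,q}(\Omega)}$, where $C_3=C_2/(1-\sigma)$ depends only on $d,\theta,p,\lambda$. Because $\alpha>0$, this tends to $0$ as $r\to0$, which gives the convergence in $L^q$. This also covers $q=\infty$.

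\textbf{Main obstacle.} The only delicate points are the measure-theoretic ones in Step 1. First, Lebesgue differentiation must be justified for the truncated balls $B_\Omega(x,\rho)$, which the $\theta$-smoothness provides, and for interior points one can simply take $\rho<\dist(x,\partial\Omega)$. Second, one must check that the convergence along the dyadic sequence suffices. It does, because Lemma~\ref{lem:2} holds for every $r>0$ with the dyadic sequence attached to that $r$, and the limit point $u(x)$ is the same for every $r$.
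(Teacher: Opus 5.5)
Your proof is correct and follows the paper's route: since $\sigma<1$, you let $n\to\infty$ in Lemma~\ref{lem:2}, use Lebesgue differentiation to get the pointwise bound, and take the $L^q$-norm with $C_3=C_2/(1-\sigma)$. You are also right that the exponent must be $\alpha=(\lambda-d)/p>0$. The statement's $(d-\lambda)/p$ is a sign typo: it contradicts both the convergence claim and the space $L_C^{p,d+\alpha p,q}$.
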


\begin{proof} Simply remark that in this case $\sigma <1$ and take the limit when $n$ goes to infinity, as $u_{r/2^n}$ converges almost everywhere to $u(x)$ via the Lebesgue differentiation theorem,
\begin{equation}\nonumber\begin{aligned}
|u_{r}(x)-u(x)|\leq C_2r^{\alpha}\frac{1}{1-\sigma}O_\Omega^{p,\lambda}[u](x),\quad\forall r>0,\quad \fae\ x\in\Omega.
\end{aligned}\end{equation}
Now take the $L^q$-norm and remember that $\norms{O_\Omega^{p,\lambda}[u]}{L^q(\Omega)}\leq \norm{u}{L_C^{p,\lambda,q}(\Omega)}$ to conclude. 
\end{proof}

\begin{proposition}\label{prop:bound}For any $u\in L_C^{p,\lambda,q}(\Omega)$ with $\lambda<d$ the local mean $u_r$ satisfies
\begin{equation}\nonumber
|u_r(x)|\leq \frac{1}{|\Omega|^{1/p}}\norm{u}{L^p(\Omega)} + C_4r^{-\alpha}O_\Omega^{p,\lambda}[u](x),\quad  \forall r>0,\quad \forall x\in\Omega.
\end{equation}
where $\alpha:=(d-\lambda)/p$ and $C_4:=\frac{C_2}{2^{\alpha}-1}+C_1(1+2^{\lambda})^{1/p}$. 
\end{proposition}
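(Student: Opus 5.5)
The idea is to compare $u_r(x)$ with the mean at a large scale, where it is controlled by $\norm{u}{L^p(\Omega)}$, and to telescope between the two scales using Lemmas \ref{lem:1} and \ref{lem:2}. Since $\lambda<d$, we have $\sigma=2^{(d-\lambda)/p}=2^\alpha>1$. The geometric sum in Lemma \ref{lem:2} is then dominated by its largest term, which sits at the \emph{smallest} radius. So the telescoping should start at a large radius $R$ and go down to $r$, in a way that produces the factor $r^{-\alpha}$ rather than $R^{-\alpha}$.

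\textbf{Case $r\geq r_0:=\diam(\Omega)$.} Here $B_\Omega(x,r)=\Omega$, so $u_r(x)=u_\Omega$. Hölder's inequality gives
\[
|u_\Omega|\leq |\Omega|^{-1}\norm{u}{L^1(\Omega)}\leq |\Omega|^{-1/p}\norm{u}{L^p(\Omega)},
\]
which is the claim even without the oscillation term.

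\textbf{Case $r<r_0$.} Choose $n\geq 1$ with $2^{n-1}r< r_0\leq 2^n r$, and set $R:=2^nr\geq r_0$, so that $u_R(x)=u_\Omega$. Write
\[
|u_r(x)|\leq |u_\Omega|+|u_{R}(x)-u_{R/2^{n}}(x)|.
\]
A cleaner route is to split off the last step: with $R':=2^{n-1}r$, write $|u_R(x)-u_r(x)|\leq |u_R(x)-u_{R'}(x)|+|u_{R'}(x)-u_r(x)|$.

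For the first term, apply Lemma \ref{lem:1} with $r_1=R'$ and $r_2=R=2R'$:
\[
|u_{R'}(x)-u_R(x)|\leq C_1\bigl((1+2^\lambda)R'^{\lambda}/R'^{d}\bigr)^{1/p}O_\Omega^{p,\lambda}[u](x)=C_1(1+2^\lambda)^{1/p}R'^{-\alpha}O_\Omega^{p,\lambda}[u](x).
\]
Since $R'\geq r$, this is at most $C_1(1+2^\lambda)^{1/p}r^{-\alpha}O_\Omega^{p,\lambda}[u](x)$.

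For the second term, apply Lemma \ref{lem:2} at radius $R'$ with $n-1$ halvings:
\[
|u_{R'}(x)-u_r(x)|\leq C_2R'^{-\alpha}\,\frac{\sigma^{n-1}-1}{\sigma-1}\,O_\Omega^{p,\lambda}[u](x).
\]
Now $R'^{-\alpha}\sigma^{n-1}=(2^{n-1}r)^{-\alpha}2^{\alpha(n-1)}=r^{-\alpha}$. Hence this term is at most $\frac{C_2}{2^\alpha-1}r^{-\alpha}O_\Omega^{p,\lambda}[u](x)$.

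Summing the two pieces gives exactly $C_4=\frac{C_2}{2^\alpha-1}+C_1(1+2^\lambda)^{1/p}$. When $n=1$ the Lemma \ref{lem:2} part is empty and the bound is immediate.

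\textbf{Main obstacle.} The delicate step is the bookkeeping of powers: we must check that $R'^{-\alpha}\sigma^{n-1}$ collapses to $r^{-\alpha}$, using $\sigma=2^\alpha$ with $\alpha>0$, and that the bound holds for every $x\in\Omega$. This works because Lemmas \ref{lem:1} and \ref{lem:2} are pointwise in $x$; the $\theta$-smoothness hypothesis is used only inside $C_1$. No Lebesgue-point argument is needed.
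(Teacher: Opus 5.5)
Your proof is correct and follows essentially the paper's argument: you telescope from the mean over $\Omega$ down to $u_r(x)$ using Lemmas \ref{lem:1} and \ref{lem:2}, and exploit $\sigma=2^\alpha>1$ so that $\sigma^{n}$ times the large-scale factor collapses to $r^{-\alpha}$, which yields exactly the same $C_4$. The only difference is bookkeeping: the paper anchors the dyadic chain at $\diam(\Omega)$ and spends the Lemma \ref{lem:1} step on the non-dyadic gap between $\diam(\Omega)/2^n$ and $r$, whereas you anchor the chain at $r$ and use $u_R(x)=u_\Omega$ for $R\geq\diam(\Omega)$, so your Lemma \ref{lem:1} step is just the top dyadic step; your first, abandoned route using Lemma \ref{lem:2} alone is also valid and even gives the smaller constant $C_2/(2^\alpha-1)$.
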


\begin{proof} Call $r_0:=\diam(\Omega)$, fix $r\leq r_0$ and pick $n\in\N$ such that $\frac{r_0}{2^{n+1}}\leq r\leq \frac{r_0}{2^{n}}$ and write 
\begin{equation}\nonumber
|u_r(x)|\leq |u_{r_0}(x)| + |u_{r_0}(x) - u_{\frac{r_0}{2^{n}}}(x)| +  |u_{\frac{r_0}{2^{n}}}(x) - u_{r}(x)|.
\end{equation} 
The first right hand term is bounded by $\frac{1}{|\Omega|^{1/p}}\norm{u}{L^p(\Omega)}$. The second right hand term is bounded using Lemma \ref{lem:2}:
\begin{equation}\nonumber
|u_{r_0}(x)-u_{\frac{r_0}{2^n}}(x)|\leq C_2r_0^{-\alpha}\frac{1-\sigma^n}{1-\sigma}O_\Omega^{p,{\lambda}}[u](x),\quad \forall x\in\Omega,\end{equation} 
with $\sigma=2^{\alpha}>1$. Remark that $(1-\sigma^n)/(1-\sigma)\leq \sigma^n/(\sigma-1)$ and $r_0^{-\alpha}\sigma^n=\left(\frac{r_0}{2^n}\right)^{-\alpha}\leq r^{-\alpha}$ so
\begin{equation}\nonumber
|u_{r_0}(x)-u_{\frac{r_0}{2^n}}(x)|\leq \frac{C_2}{\sigma-1}r^{-\alpha}O_\Omega^{p,{\lambda}}[u](x),\quad \forall x\in\Omega.\end{equation} 
The third right hand term is bounded using Lemma \ref{lem:1}:
\begin{equation}\nonumber
|u_{\frac{r_0}{2^n}}(x)-u_{r}(x)|\leq C_1\left(\frac{r^{\lambda}+\left(\frac{r_0}{2^n}\right)^{\lambda}}{r^d}\right)^{1/p}O_\Omega^{p,{\lambda}}[u](x),\quad \forall x\in\Omega,
\end{equation} 
and using that $\frac{r_0}{2^n}\leq 2r$ we get that
\begin{equation}\nonumber
|u_{\frac{r_0}{2^n}}(x)-u_{r}(x)|\leq C_1(1+2^{\lambda})^{1/p}r^{-\alpha}O_\Omega^{p,{\lambda}}[u](x),\quad \forall x\in\Omega.
\end{equation} 
Putting all things together it comes that
\begin{equation}\nonumber
|u_r(x)|\leq \frac{1}{|\Omega|^{1/p}}\norm{u}{L^p(\Omega)} + C_4r^{-\alpha}O_\Omega^{p,{\lambda}}[u](x),\quad \forall x\in\Omega.
\end{equation}
This estimation is still valid if $r>r_0$ as in this case, $|u_r(x)|\leq\norm{u}{L^p(\Omega)}|\Omega|^{-1/p}$. 
\end{proof}

\begin{lemma}\label{lem:38} For any $u\in L_C^{1,d+\alpha,q}(\Omega)$, $x,y\in\Omega$, $r=2|x-y|$, we have

\begin{equation}\nonumber
|u_r(x)-u_r(y)|\leq C_5r^\alpha\left(O_\Omega^{1,d+\alpha}[u](x) + O_\Omega^{1,d+\alpha}[u](y)\right).
\end{equation}
where $C_5=C_5(d,\theta):=2^d/(\theta v_d)$.
\end{lemma}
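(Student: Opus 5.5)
The plan is to compare both local means $u_r(x)$ and $u_r(y)$ with the mean of $u$ over a common set, and then control each difference by the oscillation at the corresponding center, exactly as in Lemma \ref{lem:1}. With $r=2|x-y|$, set $D:=B_\Omega(x,r)\cap B_\Omega(y,r)$. Since $|x-y|=r/2$, the smaller ball $B_\Omega(x,r/2)$ lies inside both $B(x,r)$ and $B(y,r)$, because $|z-y|\le |z-x|+|x-y|<r$. So I will simply take $D:=B_\Omega(x,r/2)$. By $\theta$-smoothness (valid here since $r/2<\diam\Omega$), this gives $|D|\geq \theta v_d (r/2)^d$.

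Next I average a triangle inequality over $D$. For every $z\in D$,
\[
|u_r(x)-u_r(y)|\leq |u(z)-u_r(x)|+|u(z)-u_r(y)|.
\]
Integrating in $z$ over $D$ and enlarging the domains of integration to $B_\Omega(x,r)$ and $B_\Omega(y,r)$ respectively gives
\[
|D|\,|u_r(x)-u_r(y)|\leq \int_{B_\Omega(x,r)}|u-u_r(x)|+\int_{B_\Omega(y,r)}|u-u_r(y)|.
\]
By the definition of $O_\Omega^{1,d+\alpha}$, the right-hand side is at most $r^{d+\alpha}\big(O_\Omega^{1,d+\alpha}[u](x)+O_\Omega^{1,d+\alpha}[u](y)\big)$. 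Dividing by $|D|\geq \theta v_d 2^{-d}r^d$ yields the claim with $C_5=2^d/(\theta v_d)$, which matches the stated constant.

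The only point needing care is the range of validity of $\theta$-smoothness, which is only assumed for radii below $\diam\Omega$. Here $r/2=|x-y|<\diam\Omega$ holds automatically for $x\ne y$ in $\Omega$, and the case $x=y$ is trivial. Otherwise the argument is direct and I do not expect a real obstacle.
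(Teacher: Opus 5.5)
Your proposal is correct and is essentially the paper's proof: the paper averages the triangle inequality over $B_\Omega(x,r)\cap B_\Omega(y,r)$ and then bounds that set's measure from below using $B_\Omega(x,r/2)\subset B_\Omega(x,r)\cap B_\Omega(y,r)$ and $\theta$-smoothness, which yields the same constant $C_5=2^d/(\theta v_d)$. Averaging directly over $B_\Omega(x,r/2)$, as you do, is an immaterial simplification. Your remark that $r/2=|x-y|<\diam\Omega$ (strict, since $\Omega$ is open) fills in a point the paper leaves implicit.
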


\begin{proof} For any $z\in\Omega$,
\begin{equation}\nonumber\begin{aligned}
|u_r(x)-u_r(y)| &\leq |u(z)-u_r(x)|+|u(z)-u_r(y)|,\\
|B_\Omega(x,r)\cap B_\Omega(y,r)||u_r(x)-u_r(y)| &\leq \int_{B_\Omega(x,r)\cap B_\Omega(y,r)}|u(z)-u_r(x)|\td z\\ &+\int_{B_\Omega(x,r)\cap B_\Omega(y,r)}|u(z)-u_r(y)|\td z,\\
&\leq \norm{u-u_r(x)}{L^1(B_\Omega(x,r))} + \norm{u-u_r(y)}{L^1(B_\Omega(y,r))}.
\end{aligned}\end{equation}
Considering that $B_\Omega(x,r/2)\subset B_\Omega(x,r)\cap B_\Omega(y,r)$ we have that 
\begin{equation}\nonumber
|B_\Omega(x,r)\cap B_\Omega(y,r)|\geq \frac{\theta v_d}{2^d}r^d,
\end{equation}
we deduce that
\begin{equation}\nonumber\begin{aligned}
|u_r(x)-u_r(y)| &\leq \frac{2^d}{\theta v_d}r^{\alpha}\left(O_\Omega^{1,d+\alpha}[u](x) + O_\Omega^{1,d+\alpha}[u](y)\right).
\end{aligned}\end{equation}
\end{proof}

\begin{corollary}\label{cor:39} For any $u\in L_C^{1,d+\alpha,q}(\Omega)$ with $\alpha>0$ we have
\begin{equation}\nonumber
|u(x)-u(y)| \leq C_6\left(O_\Omega^{1,d+\alpha}[u](x)+O_\Omega^{1,d+\alpha}[u](y)\right)|x-y|^\alpha,\quad\fae\ (x,y)\in\Omega^2,
\end{equation}
where $C_6:=C_6(d,\theta,\alpha)$.
\end{corollary}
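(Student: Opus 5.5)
The plan is to combine Lemma \ref{lem:38} with the pointwise convergence estimate of Proposition \ref{prop:36}, via a triangle inequality. Here the Campanato exponent is $\lambda=d+\alpha$ with $p=1$, so $\lambda>d$. The exponent $(d-\lambda)/p$ appearing in Proposition \ref{prop:36} equals $-\alpha$. Reading that statement with the correct sign, it gives, for every $r>0$ and almost every $x\in\Omega$,
\begin{equation}\nonumber
|u_r(x)-u(x)|\leq \frac{C_2}{1-\sigma}\,r^{\alpha}\,O_\Omega^{1,d+\alpha}[u](x),\qquad \sigma=2^{-\alpha}<1.
\end{equation}
Fix such a full-measure set $N^c\subset\Omega$, on which the Lebesgue differentiation theorem holds and $O_\Omega^{1,d+\alpha}[u]$ is finite. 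The estimate is then valid simultaneously for all $r>0$ at every $x\in N^c$. This is important because $r$ will depend on the pair $(x,y)$.

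Next, take $(x,y)\in N^c\times N^c$, which is a set of full measure in $\Omega^2$, and set $r:=2|x-y|$. We may assume $x\neq y$; otherwise there is nothing to prove. We split
\begin{equation}\nonumber
|u(x)-u(y)|\leq |u(x)-u_r(x)|+|u_r(x)-u_r(y)|+|u_r(y)-u(y)|.
\end{equation}
The first and third terms are controlled by the estimate above, each by a multiple of $r^\alpha$ times the oscillation at $x$, respectively at $y$. The middle term is handled directly by Lemma \ref{lem:38}, which gives $C_5r^\alpha$ times the sum of the two oscillations. Summing the three bounds and substituting $r^\alpha=2^\alpha|x-y|^\alpha$ yields the claim with
\begin{equation}\nonumber
C_6:=2^\alpha\Big(C_5+\tfrac{C_2}{1-2^{-\alpha}}\Big).
\end{equation}
This constant depends only on $d$, $\theta$ and $\alpha$, since $p=1$ and $\lambda=d+\alpha$ are fixed.

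The main point requiring care is the first step, namely that the pointwise bound from Proposition \ref{prop:36} holds for all $r$ at once outside a single null set. This is true because the bound from Lemma \ref{lem:2} is valid for every $x$ and every $r$, and the limit $n\to\infty$ only uses convergence $u_{r/2^n}(x)\to u(x)$. That convergence follows from Lebesgue points, whose set does not depend on $r$. I would state this explicitly.

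A second check concerns Lemma \ref{lem:38}: its proof uses $|B_\Omega(x,r/2)|\geq\theta v_d(r/2)^d$, which requires $r/2<\diam\Omega$. This holds since $r/2=|x-y|<\diam\Omega$. If equality-type edge cases arise, one can simply shrink slightly. No use of $q$ is needed beyond guaranteeing that $O_\Omega^{1,d+\alpha}[u]$ is finite almost everywhere.
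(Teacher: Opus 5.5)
Your proof is correct and follows the paper's own argument: the same three-term triangle inequality with $r=2|x-y|$, the pointwise bound of Proposition~\ref{prop:36} for the two outer terms (you rightly correct the sign of the exponent there), and Lemma~\ref{lem:38} for the middle term. Your two additional checks fill details the paper leaves implicit. First, the exceptional null set comes from Lebesgue points and does not depend on $r$, so the bound holds at the pair-dependent radius. Second, $|x-y|<\diam\Omega$, which makes Lemma~\ref{lem:38} applicable.
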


\begin{proof} Take $u\in L_C^{1,d+\alpha,q}(\Omega)$ and $x,y\in \Omega$ and denote $r=2|x-y|$. Write
\begin{equation}\nonumber
|u(x)-u(y)|\leq |u(x)-u_r(x)| + |u(y)-u_r(y)| + |u_r(x)-u_r(y)|.
\end{equation}
Using the argument of the proof of Proposition \ref{prop:36}, the first term is bounded by 
\begin{equation}\nonumber\begin{aligned}
|u_{r}(x)-u(x)|\leq C_3r^{\alpha}O_\Omega^{1,d+\alpha}[u](x),\quad \fae  x\in\Omega,
\end{aligned}\end{equation}
and second term is bounded by 
\begin{equation}\nonumber\begin{aligned}
|u_{r}(y)-u(y)|\leq C_3r^{\alpha}O_\Omega^{1,d+\alpha}[u](y),\quad \fae  y\in\Omega.
\end{aligned}\end{equation}
The third term is bounded thanks to Lemma \ref{lem:38}
\begin{equation}\nonumber\begin{aligned}
|u_r(x)-u_r(y)|\leq C_5r^{\alpha}\left(O_\Omega^{1,d+\alpha}[u](x) + O_\Omega^{1,d+\alpha}[u](y)\right),\quad \fae(x,y)\in\Omega^2. 
\end{aligned}\end{equation}
Just add the three terms to conclude.
\end{proof}

\section{Decay estimates for harmonic functions}\label{app:decay}

In this appendix, we collect several classical estimates for harmonic functions on balls that are key tools in the derivation of regularity estimates in generalized Morrey--Campanato spaces. We call for $r>0$ the ball $B_r:=B(0,r)\subset\R^d$, we also denote $\nabla^\ell u$ the tensor of all the $\ell$--order partial derivative of a function $u:\R^d\to\R^p$. 

\begin{proposition}[Interior estimates for harmonic functions] Let $\ell\in\N$, there exists a constant $C$ depending only on $\ell$ and $d$ such that any harmonic function $u$ on $B_1$ satisfies
\[
\|\nabla^\ell u\|_{L^2(B_{1/2})}
\le
C\|u\|_{L^2(B_1)}.
\]
\end{proposition}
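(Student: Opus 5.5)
The plan is to reduce to the unit ball by scaling, and then combine the mean value property with a Caccioppoli-type bound. The interior estimate
\[
\|\nabla^\ell u\|_{L^2(B_{1/2})}\le C\|u\|_{L^2(B_1)}
\]
is already normalized on $B_1$, so no rescaling is needed. The key structural fact is that every partial derivative $\partial^\beta u$ of a harmonic function is again harmonic in $B_1$, since derivatives commute with $\Delta$. Harmonic functions are $C^\infty$ by Weyl's lemma, so this makes sense even if $u$ is only assumed to be a weak harmonic function in $L^2$.

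The first step is to obtain a pointwise bound for $u$ via the mean value property. For $x\in B_{3/4}$ the ball $B(x,1/4)$ lies in $B_1$, so
\[
|u(x)|=\Bigl|\fint_{B(x,1/4)}u\Bigr|\le |B(x,1/4)|^{-1/2}\|u\|_{L^2(B_1)}.
\]
This gives $\|u\|_{L^\infty(B_{3/4})}\le C(d)\|u\|_{L^2(B_1)}$.

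The second step is the derivative bound, for which I would use the mean value property for derivatives. Since $\partial_i u$ is harmonic, on a ball $B(x,s)$ we have
\[
\partial_i u(x)=\fint_{B(x,s)}\partial_i u=\frac{1}{|B(x,s)|}\int_{\partial B(x,s)}u\,\nu_i ,
\]
by the divergence theorem. This yields the classical estimate $|\nabla u(x)|\le \frac{d}{s}\sup_{B(x,s)}|u|$. I would then iterate this $\ell$ times on nested balls with radii decreasing from $3/4$ to $1/2$ in steps of $1/(4\ell)$, which gives
\[
\sup_{B_{1/2}}|\nabla^\ell u|\le C(d,\ell)\sup_{B_{3/4}}|u|.
\]
Combining this with the first step and the trivial inequality $\|\cdot\|_{L^2(B_{1/2})}\le |B_{1/2}|^{1/2}\|\cdot\|_{L^\infty(B_{1/2})}$ proves the claim.

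An alternative route stays entirely in $L^2$. One applies the Caccioppoli inequality $\int_{B_\rho}|\nabla w|^2\le C(R-\rho)^{-2}\int_{B_R}|w|^2$ to each harmonic derivative $w=\partial^\beta u$, using a chain of $\ell$ nested radii between $1/2$ and $1$. The Caccioppoli inequality itself comes from testing $\Delta w=0$ against $\eta^2 w$ with a cutoff $\eta$. I expect the only mildly delicate point to be the justification of regularity needed to differentiate; this is settled by Weyl's lemma, or by mollification, since mollified harmonic functions remain harmonic on slightly smaller balls. The constants depend only on $d$ and $\ell$ through the number of nested balls and the cutoff gradients.
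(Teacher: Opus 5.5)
Your main argument is correct, but it takes a different route from the paper. Your ``alternative route'' is the paper's proof: it applies the Caccioppoli inequality to the (harmonic) components of $\nabla^j u$ on the nested balls $B_{r_j}$, $r_j=\frac12+\frac{\ell-j}{2\ell}$, and chains the $\ell$ estimates to get the constant $(2\ell C)^\ell$, never leaving $L^2$.

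Your primary route passes through $L^\infty$ instead:
\begin{itemize}
\item the mean value property on $B(x,1/4)$ plus Cauchy--Schwarz gives $\sup_{B_{3/4}}|u|\le C(d)\|u\|_{L^2(B_1)}$;
\item the gradient bound $|\nabla u(x)|\le \frac{d}{s}\sup_{B(x,s)}|u|$, iterated with step $s=1/(4\ell)$, bounds $\sup_{B_{1/2}}|\nabla^\ell u|$ (the closed balls $\overline{B(x,s)}$ stay inside the previous ball because $|x|+s<r_j$);
\item the trivial $L^\infty\to L^2$ inequality on $B_{1/2}$ finishes.
\end{itemize}

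The Caccioppoli approach is more robust. It only uses the energy inequality obtained by testing with $\eta^2 w$, so it carries over unchanged to constant-coefficient elliptic equations (and systems), where no mean value property is available.

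Your approach gives a stronger, pointwise conclusion, $\sup_{B_{1/2}}|\nabla^\ell u|\le C(d,\ell)\|u\|_{L^2(B_1)}$. Applied to $u-u_{B_1}$ and to $\nabla u-(\nabla u)_{B_1}$, it yields directly the paper's later ``Pointwise bounds'' corollary. The paper obtains that corollary only by summing the $L^2$ estimates up to order $n>d/2$ and invoking the Sobolev embedding $H^n(B_{1/2})\hookrightarrow L^\infty(B_{1/2})$.

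The step $1/(4\ell)$ is undefined for $\ell=0$, as is the paper's $r_j$. This is harmless because that case of the statement is trivial.
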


\begin{proof}
Let
\[
r_j=\frac12+\frac{\ell-j}{2\ell},
\qquad j=0,\ldots,\ell.
\]
Then
\[
r_0=1,\qquad r_\ell=\frac12,
\qquad r_j-r_{j+1}=\frac1{2\ell}.
\]
Since derivatives of harmonic functions are harmonic, the famous Caccioppoli
inequality gives for every $j\in\{0,\ldots,\ell-1\}$
\[
\|\nabla^{j+1}u\|_{L^2(B_{r_{j+1}})}
\leq
\frac{C}{r_j-r_{j+1}}
\|\nabla^j u\|_{L^2(B_{r_j})},
\]
where $C$ is an universal constant. Iterating these estimates, we obtain
\begin{equation}\nonumber
\begin{aligned}
\|\nabla^\ell u\|_{L^2(B_{1/2})}
&\leq
C^\ell
\prod_{j=0}^{\ell-1}
(r_j-r_{j+1})^{-1}
\|u\|_{L^2(B_1)}\\
&=
(2\ell C)^\ell
\|u\|_{L^2(B_1)}.
\end{aligned}
\end{equation}
This concludes the proof.
\end{proof}

%

\begin{corollary} Let $k,\ell\in\N$, there exists a constant $C$ depending only on $\ell$ and $d$ such that any harmonic function $u$ on $B_1$ satisfies
\begin{equation}\nonumber
\int_{B_{1/2}}|\nabla^{k+\ell} u|^2\leq C\int_{B_1}|\nabla^ku-(\nabla^ku)_{B_1}|^2.
\end{equation}
Moreover, for any $n\in\N$ there exists a constant $C>0$ depending only on $n$ and $d$ such that
\begin{equation}\nonumber\begin{aligned}
\norm{\nabla u}{H^n(B_{1/2})}^2&\leq C\int_{B_1}|u-u_{B_1}|^2,\\
\norm{\nabla^2u}{H^n(B_{1/2})}^2&\leq C\int_{B_1}|\nabla u-(\nabla u)_{B_1}|^2.
\end{aligned}\end{equation}
\end{corollary}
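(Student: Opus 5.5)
The idea is to apply the preceding Proposition (interior estimates for harmonic functions) to derivatives of $u$, after removing a polynomial so that a Poincaré inequality becomes available.

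\emph{First estimate.} Since harmonicity is preserved under differentiation, each component of $\nabla^k u$ is harmonic on $B_1$. So is $w:=\nabla^k u-(\nabla^k u)_{B_1}$, because subtracting a constant tensor does not affect harmonicity. If $\ell\geq 1$, then $\nabla^\ell w=\nabla^{k+\ell}u$. Applying the Proposition to each component of $w$ with order $\ell$ and summing over components gives
\[
\|\nabla^{k+\ell}u\|_{L^2(B_{1/2})}\leq C\|w\|_{L^2(B_1)},
\]
which is the first claim after squaring. The case $\ell=0$ is of a different nature: there the left side contains the mean of $\nabla^k u$, which the right side does not see. I read the statement as intended for $\ell\geq 1$, and for $\ell=0$ I would state the result with $\nabla^k u-(\nabla^k u)_{B_1}$ on the left. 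The constant depends only on $\ell$, $d$, and the number of components, which in turn depends only on $k$ and $d$.

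\emph{Second estimate.} Write $\|\nabla u\|_{H^n(B_{1/2})}^2=\sum_{j=0}^{n}\|\nabla^{j+1}u\|_{L^2(B_{1/2})}^2$. Each term with $j+1\geq 1$ is handled by the first estimate with $k=0$ and $\ell=j+1$, since $u-u_{B_1}$ is harmonic. Summing these $n+1$ bounds gives a constant depending on $n$ and $d$.

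\emph{Third estimate.} Write $\|\nabla^2u\|_{H^n(B_{1/2})}^2=\sum_{j=0}^n\|\nabla^{j+2}u\|^2_{L^2(B_{1/2})}$. Apply the first estimate with $k=1$ and $\ell=j+1\geq1$: the function $\nabla u-(\nabla u)_{B_1}$ is vector-valued and harmonic, and its $(j+1)$-th derivatives are exactly $\nabla^{j+2}u$. Summing over $j$ finishes the argument.

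The only step needing care is the observation that subtracting a constant (respectively an affine function) preserves harmonicity while leaving the relevant higher derivatives unchanged. This is what allows the mean to be subtracted on the right-hand side. No genuine obstacle is expected beyond the $\ell=0$ caveat noted above.
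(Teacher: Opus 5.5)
Your proposal is correct and is essentially the paper's proof: you set $w=\nabla^k u-(\nabla^k u)_{B_1}$, apply the interior estimate to each harmonic component, and sum over $\ell=1,\dots,n+1$ with $k=0$ and $k=1$ to get the two $H^n$ bounds. Your $\ell=0$ caveat is justified, because the paper's identity $\nabla^\ell w=\nabla^{k+\ell}u$ fails there; for instance, a nonzero constant $u$ with $k=\ell=0$ makes the left side positive and the right side zero. Your worry about $k$-dependence is unnecessary, since summing the squared componentwise bounds keeps the constant $C(\ell,d)$.
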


\begin{proof} Let $w:=\nabla^k u-(\nabla^k u)_{B_1}$. Each component of $w$ is harmonic on $B_1$. Using the previous result, and since $\nabla^\ell w=\nabla^{k+\ell}u$ we obtain
\[
\int_{B_{1/2}}|\nabla^{k+\ell}u|^2 = \int_{B_{1/2}}|\nabla^\ell w|^2
\leq
C(\ell,d)
\int_{B_1}|w|^2
\leq 
C(\ell,d)
\int_{B_1}|\nabla^k u-(\nabla^k u)_{B_1}|^2.
\]
This proves the first estimate. Taking \(k=0\) and summing over \(\ell=1,\dots,n+1\), we get
\[
\|\nabla u\|_{H^n(B_{1/2})}^2
\le
\wt C(n,d)
\int_{B_1}|u-u_{B_1}|^2.
\]
Taking \(k=1\) and summing over \(\ell=1,\dots,n+1\), we obtain
\[
\|\nabla^2u\|_{H^n(B_{1/2})}^2
\le
\wt C(n,d)
\int_{B_1}|\nabla u-(\nabla u)_{B_1}|^2.
\]
\end{proof}

\begin{corollary}[Pointwise bounds] There exist two positive constants $C_1$ and $C_2$ depending only on $d$ such that any harmonic function $u$ on $B_1$ satisfies 
\begin{equation}\nonumber\begin{aligned}
\sup_{B_{1/2}}|\nabla u|^2&\leq C_1\int_{B_1}|u - u_{B_1}|^2\leq C_2\int_{B_1}|\nabla u|^2,\\
\sup_{B_{1/2}}|\nabla^2 u|^2&\leq C_1\int_{B_1}|\nabla u-(\nabla u)_{B_1}|^2 \leq C_2\int_{B_1}|\nabla^2 u|^2.
\end{aligned}\end{equation}
\end{corollary}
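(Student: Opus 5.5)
The plan is to derive both chains of inequalities from the preceding Corollary, which gives $H^n(B_{1/2})$ control, combined with the Sobolev embedding $H^n(B_{1/2})\inj L^\infty(B_{1/2})$. That embedding holds as soon as $n>d/2$, and its constant depends only on $d$. The second inequality in each chain will then follow from a Poincaré inequality on $B_1$.

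\textbf{First chain.} Fix $n:=\lfloor d/2\rfloor+1$. Since $\nabla u$ is smooth in $B_1$, the Sobolev embedding on the Lipschitz domain $B_{1/2}$ gives $\sup_{B_{1/2}}|\nabla u|^2\leq C(d)\norm{\nabla u}{H^n(B_{1/2})}^2$. Strictly, the embedding yields control on an a.e. representative, but $\nabla u$ is continuous, so the supremum is bounded. The preceding Corollary then gives $\norm{\nabla u}{H^n(B_{1/2})}^2\leq C\int_{B_1}|u-u_{B_1}|^2$, which is the first inequality with $C_1=C_1(d)$.

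For the second inequality, I would apply the Poincaré–Wirtinger inequality on the unit ball, $\int_{B_1}|u-u_{B_1}|^2\leq C_P(d)\int_{B_1}|\nabla u|^2$, and set $C_2:=C_1C_P$. If $\nabla u\notin L^2(B_1)$, the right-hand side is infinite and there is nothing to prove.

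\textbf{Second chain.} This is identical, using the second estimate of the preceding Corollary: $\sup_{B_{1/2}}|\nabla^2 u|^2\leq C\norm{\nabla^2 u}{H^n(B_{1/2})}^2\leq C\int_{B_1}|\nabla u-(\nabla u)_{B_1}|^2$. Poincaré–Wirtinger applied componentwise to $\nabla u$ then gives the bound by $C\int_{B_1}|\nabla^2u|^2$. Taking $C_1,C_2$ to be the maxima of the constants from the two chains makes them common to both, and they depend only on $d$.

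The only delicate point is that the Corollary is stated for $u$ harmonic on $B_1$ while its right-hand side integrates over all of $B_1$. If $u\notin L^2(B_1)$, that quantity may be infinite, in which case the inequality holds trivially. Otherwise the Caccioppoli iteration in the Proposition is valid as stated. Beyond this, the argument involves no real obstacle: it is a direct combination of the previous Corollary with standard Sobolev and Poincaré inequalities on fixed balls.
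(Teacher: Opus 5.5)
Your proposal is correct and follows essentially the same route as the paper. The $C_1$ inequalities come from the preceding corollary combined with the Sobolev embedding $H^n(B_{1/2})\hookrightarrow L^\infty(B_{1/2})$ for $n>d/2$, and the $C_2$ inequalities come from the Poincaré inequality on $B_1$; your extra remarks on continuous representatives, common constants and infinite right-hand sides are valid refinements the paper leaves implicit.
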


\begin{proof} The two first inequalities (involving $C_1$) come from the Sobolev embedding $H^n(B_{1/2})\inj L^\infty(B_{1/2})$ for any integer $n>d/2$, and the previous corollary. The second inequalities (involving $C_2$) come from Poincaré inequality in $B_1$.
\end{proof}

\begin{theorem}[Decay estimates for harmonic functions]\label{theo:decay} There exists a constant $C>0$ depending only on $d$ such that for any harmonic function $u$ in a ball $B_r:=B(x_0,r)$ and any $\rho\in(0,r]$ one has
\begin{equation}\nonumber
\int_{B_\rho}|\nabla u|^2 \leq C\frac{\rho^{d}}{r^d}\int_{B_r}|\nabla u|^2,\\
\end{equation}
and 
\begin{equation}\nonumber\begin{aligned}
\int_{B_\rho}|\nabla u-(\nabla u)_{B_\rho}|^2 \leq C\frac{\rho^{d+2}}{r^{d+2}}\int_{B_r}|\nabla u-(\nabla u)_{B_r}|^2.
\end{aligned}\end{equation}
\end{theorem}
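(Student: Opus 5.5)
The plan is to reduce to the unit ball by translation and scaling, treat the regime $\rho\le r/2$ with the pointwise bounds of the preceding corollary, and handle $\rho\in(r/2,r]$ trivially by enlarging the constant. Both inequalities are invariant under $u\mapsto u(x_0+r\,\cdot)$. Indeed, for $\tilde u(z):=u(x_0+rz)$ we have $\int_{B(x_0,\rho)}|\nabla u|^2=r^{d-2}\int_{B_{\rho/r}}|\nabla\tilde u|^2$, and similarly for the oscillation terms. The ratios $\rho^d/r^d$ and $\rho^{d+2}/r^{d+2}$ are scale-free, so it suffices to take $x_0=0$, $r=1$ and $\rho\in(0,1]$.

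For the first estimate with $\rho\le 1/2$, I would use the pointwise bound $\sup_{B_{1/2}}|\nabla u|^2\le C_2\int_{B_1}|\nabla u|^2$ from the pointwise-bounds corollary. This gives
\[
\int_{B_\rho}|\nabla u|^2\le v_d\rho^d\sup_{B_{1/2}}|\nabla u|^2\le v_dC_2\rho^d\int_{B_1}|\nabla u|^2.
\]
For $\rho\in(1/2,1]$ I would simply write $\int_{B_\rho}|\nabla u|^2\le\int_{B_1}|\nabla u|^2\le 2^d\rho^d\int_{B_1}|\nabla u|^2$. Taking $C:=\max(v_dC_2,2^d)$ covers both ranges.

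For the second estimate I would first use the minimizing property of the mean, exactly as in the monotonicity of $\psi$ in the proof of Theorem~\ref{theo:2}. For any constant vector $V$ we have $\int_{B_\rho}|\nabla u-(\nabla u)_{B_\rho}|^2\le\int_{B_\rho}|\nabla u-V|^2$. I would choose $V:=\nabla u(0)$, which is legitimate since $u$ is smooth in $B_1$. For $z\in B_\rho$ with $\rho\le 1/2$, the mean value inequality gives $|\nabla u(z)-\nabla u(0)|\le\rho\sup_{B_{1/2}}|\nabla^2u|$. Hence
\[
\int_{B_\rho}|\nabla u-(\nabla u)_{B_\rho}|^2\le v_d\rho^{d+2}\sup_{B_{1/2}}|\nabla^2u|^2\le v_dC_1\rho^{d+2}\int_{B_1}|\nabla u-(\nabla u)_{B_1}|^2,
\]
where the last step uses the first inequality of the second line of the pointwise-bounds corollary. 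For $\rho\in(1/2,1]$, the minimizing property with $V=(\nabla u)_{B_1}$ bounds the left side by $\int_{B_1}|\nabla u-(\nabla u)_{B_1}|^2\le 2^{d+2}\rho^{d+2}\int_{B_1}|\nabla u-(\nabla u)_{B_1}|^2$.

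There is no real obstacle here, since all the analytic content sits in the interior estimates proved above. The point needing care is that the second estimate must be controlled by the oscillation of $\nabla u$ on $B_1$ and not by $\int_{B_1}|\nabla^2 u|^2$, which may be infinite up to the boundary. This is exactly why I would invoke the bound $\sup_{B_{1/2}}|\nabla^2u|^2\le C_1\int_{B_1}|\nabla u-(\nabla u)_{B_1}|^2$ (obtained from the $H^n$ estimate with $k=1$) rather than the weaker chain ending in $C_2$. I would also check that the scaling factors $r^{d-2}$ and $r^{d}$ match on both sides, so that the constant depends only on $d$.
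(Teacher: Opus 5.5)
Your proposal is correct and follows the paper's proof: rescale to the unit ball, use the pointwise bounds of the preceding corollary on $B_{1/2}$ when $\rho\le r/2$, and enlarge the constant (to at least $2^d$ and $2^{d+2}$ respectively) when $\rho\in(r/2,r]$. The only difference is cosmetic. For the second estimate you compare with $V=\nabla u(0)$ and use the mean value inequality, where the paper applies the Poincaré inequality on $B_\rho$. Both routes end in the same bound $\sup_{B_{1/2}}|\nabla^2 u|^2\le C_1\int_{B_1}|\nabla u-(\nabla u)_{B_1}|^2$, and your explicit use of the minimizing property of the mean also justifies the case $\rho>r/2$, which the paper simply calls trivial.
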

\begin{proof} In order to use the previous corollary, we define $U(y):=u(x_0+ry)$ which is harmonic in $B_1$. For any $\rho\leq r/2$,
\begin{equation}\nonumber\begin{aligned}
\int_{B_\rho}|\nabla u|^2=r^{d-2}\int_{B_{\rho/r}}|\nabla U|^2 &\leq r^{d-2}|B_{\rho/r}|\sup_{B_{\rho/r}} |\nabla U|^2 \\
&\leq  r^{d-2}|B_{\rho/r}|\sup_{B_{1/2}} |\nabla U|^2 \leq  C\frac{\rho^{d}}{r^d}r^{d-2}\int_{B_1}|\nabla U|^2\\
&\leq C\frac{\rho^{d}}{r^d}\int_{B_r}|\nabla u|^2.
\end{aligned}\end{equation}
For $\rho\in(r/2,r]$, this inequality is trivial if one chooses $C\geq 2^d$. For the second inequality note that $(\nabla u)_{B_\rho}=\frac{1}{r}(\nabla U)_{B_{\rho/r}}$ and so for any $\rho\leq r/2$, using Poincaré in $B_\rho$ 
\begin{equation}\nonumber\begin{aligned}
\int_{B_\rho}|\nabla u-(\nabla u)_{B_\rho}|^2&\leq C\rho^2\int_{B_\rho}|\nabla^2 u|^2\leq C\rho^2r^{d-4}\int_{B_{\rho/r}}|\nabla^2 U|^2\\
&\leq C\rho^2r^{d-4}|B_{\rho/r}|\sup_{B_{1/2}}|\nabla^2 U|^2\leq C\frac{\rho^{d+2}}{r^{d+2}}r^{d-2}\int_{B_1}|\nabla U-(\nabla U)_{B_1}|^2\\
&\leq C\frac{\rho^{d+2}}{r^{d+2}}\int_{B_r}|\nabla u-(\nabla u)_{B_r}|^2.
\end{aligned}\end{equation}
Once again, for $\rho\in(r/2,r]$, this inequality is trivial as soon as $C\geq 2^{d+2}$.
\end{proof}

\section{The iteration lemma}\label{app:iteration}

\begin{lemma}\label{lem:it} Let $\ph:(0,r_0]\to \R$ be a nonnegative  and nondecreasing function which satisfies

\begin{equation}\nonumber
\ph(\rho)\leq A\left(\frac{\rho^\alpha}{r^\alpha} + Br^{\beta}\right)\ph(r) + Cr^\gamma,\ \ 0<\rho\leq r\leq r_0,
\end{equation}
for some positive constants $A,\alpha,\beta,\gamma$ with $\gamma<\alpha$ and some nonnegative constants $B,C$. 
 Then $r\mapsto r^{-\gamma}\ph(r)$ is bounded on $(0,r_0]$ and there exists two positive constants $D_1:=D_1(A,\alpha,\gamma)$ and $D_2:=D_2(A,\alpha,\beta,\gamma)$ such that 
   \begin{equation}\nonumber
r^{-\gamma}\ph(r)\leq D_1\left(r_0^{-\gamma}\ph(r_0) +D_2B^{\gamma/\beta}\ph(r_0)+ C\right),\quad \forall r\in(0,r_0].
 \end{equation}
\end{lemma}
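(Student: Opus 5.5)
The plan follows the classical iteration-lemma argument (as in Giaquinta), adapted to make the dependence on $B$ explicit through a smallness threshold for $r$. The idea is to choose a fixed ratio $\tau\in(0,1)$ and a threshold radius $r_1\le r_0$ below which $Br^\beta$ is negligible compared to $\tau^\alpha$. We then iterate along the geometric sequence $\tau^k r_1$, and finally handle $r\in(r_1,r_0]$ directly using monotonicity.

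\textbf{Step 1 (choice of $\tau$).} Pick an exponent $\mu$ with $\gamma<\mu<\alpha$, say $\mu=(\alpha+\gamma)/2$. Choose $\tau\in(0,1)$ with $2A\tau^\alpha\le\tau^\mu$, i.e.\ $\tau=(2A)^{-1/(\alpha-\mu)}$ if $2A>1$ (and e.g.\ $\tau=1/2$ otherwise). This $\tau$ depends only on $(A,\alpha,\gamma)$.

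\textbf{Step 2 (threshold radius).} Let
\[
r_1:=\min\big(r_0,(\tau^\alpha/B)^{1/\beta}\big),
\]
with the convention $r_1=r_0$ if $B=0$. Then for $r\le r_1$ we have $Br^\beta\le\tau^\alpha$. Taking $\rho=\tau r$ in the hypothesis gives
\[
\ph(\tau r)\le 2A\tau^\alpha\ph(r)+Cr^\gamma\le\tau^\mu\ph(r)+Cr^\gamma.
\]

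\textbf{Step 3 (geometric iteration).} Applying Step 2 repeatedly at $r=\tau^j r_1$ yields
\[
\ph(\tau^{k}r_1)\le\tau^{k\mu}\ph(r_1)+Cr_1^\gamma\tau^{(k-1)\gamma}\sum_{j\ge0}\tau^{j(\mu-\gamma)}.
\]
The geometric sum is finite because $\mu>\gamma$, and $\tau^{k\mu}\le\tau^{k\gamma}$. Hence
\[
\ph(\tau^k r_1)\le c\,\tau^{k\gamma}\big(\ph(r_1)+Cr_1^\gamma\big),
\]
with $c=c(A,\alpha,\gamma)$. For any $r\in(0,r_1]$, choose $k$ with $\tau^{k+1}r_1<r\le\tau^k r_1$ and use that $\ph$ is nondecreasing. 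This gives
\[
r^{-\gamma}\ph(r)\le\tau^{-\gamma}c\big(r_1^{-\gamma}\ph(r_1)+C\big).
\]

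\textbf{Step 4 (removing $r_1$).} We now bound $r_1^{-\gamma}\ph(r_1)$, and this is the step that produces the $B^{\gamma/\beta}$ term. We have $\ph(r_1)\le\ph(r_0)$ and
\[
r_1^{-\gamma}\le r_0^{-\gamma}+(B/\tau^\alpha)^{\gamma/\beta},
\]
which gives the term $D_2B^{\gamma/\beta}\ph(r_0)$ with $D_2=\tau^{-\alpha\gamma/\beta}$, depending on $(A,\alpha,\beta,\gamma)$. For $r\in(r_1,r_0]$ we argue directly:
\[
r^{-\gamma}\ph(r)\le r_1^{-\gamma}\ph(r_0),
\]
and the same bound on $r_1^{-\gamma}$ applies.

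\textbf{Main obstacle.} The main point to check is that $D_1$ depends only on $(A,\alpha,\gamma)$, so that all dependence on $B$ is carried by the explicit factor $B^{\gamma/\beta}$. This works because $\tau$ is chosen without reference to $B$; $B$ enters only through $r_1$. The remaining bookkeeping of constants is routine.
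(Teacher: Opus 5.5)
Your proof is correct and follows essentially the same route as the paper. You fix $\tau$ depending only on $(A,\alpha,\gamma)$ via the intermediate exponent $(\alpha+\gamma)/2$, use a $B$-dependent threshold radius below which $Br^\beta$ is absorbed, iterate along $\tau^k r_1$ with monotonicity to fill in intermediate radii, and bound $r_1^{-\gamma}\le r_0^{-\gamma}+c\,B^{\gamma/\beta}$ to produce the $D_2B^{\gamma/\beta}\ph(r_0)$ term. The only differences are cosmetic: your threshold is $Br_1^\beta\le\tau^\alpha$ where the paper uses $ABr^\beta\le\tau^\delta/2$.
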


\begin{proof} Choose first $\delta = (\gamma+\alpha)/2$. Consider
\begin{equation}\nonumber
\tau=\min\left(\left(\frac{1}{2A}\right)^{1/(\alpha-\delta)},\frac{1}{2}\right).
\end{equation}
It satisfies $A\tau^\alpha\leq\tau^\delta/2$. Consider also 
\begin{equation}\nonumber
r:=
\left\{\begin{aligned}
&r_0\quad\text{if}\ B=0,\\
&\min\left(\left(\frac{\tau^\delta}{2AB}\right)^{1/\beta},r_0\right)\quad\text{otherwise}.
\end{aligned}\right.
\end{equation}
It satisfies $ABr^\beta\leq\tau^\delta/2$. For any $\rho\in(0,r]$, we have
 \begin{equation}\nonumber\begin{aligned}
 \ph(\tau\rho) \leq A(\tau^\alpha+B\rho^\beta)\ph(\rho) + C\rho^\gamma\leq\tau^\delta\ph(\rho) + C\rho^\gamma.
 \end{aligned}\end{equation}
 Call $u_n:=\ph(\tau^n r)$ for any $n\in\N$, it satisfies 
 \begin{equation}\nonumber
 u_{n+1}\leq \tau^\delta u_n + Cr^\gamma\tau^{n\gamma},\quad \forall n\in\N,
 \end{equation}
 and by induction
 \begin{equation}\nonumber
 u_n\leq \ph(r)\tau^{n\delta} + Cr^\gamma\tau^{n\delta}\sum_{k=0}^{n-1}\tau^{(\gamma-\delta)k},\quad\forall n\in\N^*.
 \end{equation}
Using that $\tau^{\gamma-\delta}>1$, the second right hand side term is bounded by

\begin{equation}\nonumber
\tau^{n\delta}\sum_{k=0}^{n-1}\tau^{(\gamma-\delta)k}=\tau^{n\delta}\frac{\tau^{(\gamma-\delta)n} - 1}{\tau^{(\gamma-\delta)} - 1}
= \tau^{n\gamma}\frac{1-\tau^{(\delta-\gamma)n}}{\tau^{(\gamma-\delta)} - 1}\leq \tau^{n\gamma}\frac{1}{\tau^{(\gamma-\delta)} - 1},
\end{equation}
and then
\begin{equation}\nonumber
\ph(\tau^nr)\leq \ph(r)\tau^{n\delta} + Cr^\gamma\tau^{n\gamma}\frac{1}{\tau^{(\gamma-\delta)} - 1},\quad\forall n\in\N.
\end{equation}
 
  Now for any $\rho\in(0,r]$, there exists $n\in\N$ such that $\tau^{n+1}r<\rho\leq\tau^n r$. Using the fact that $\ph$ is nondecreasing, we get that 
   \begin{equation}\nonumber
\ph(\rho)\leq \ph(r)\tau^{n\delta} + Cr^\gamma\tau^{n\gamma}\frac{1}{\tau^{(\gamma-\delta)} - 1},
 \end{equation}
  and as $\tau^n<\frac{\rho}{\tau r}$ we end up to
 \begin{equation}\nonumber
\ph(\rho)\leq \tau^{-\delta}\left(\frac{\rho}{r}\right)^\delta\ph(r) + C\rho^{\gamma}\frac{\tau^{-\gamma}}{\tau^{(\gamma-\delta)} - 1}.
 \end{equation}
 Call now $D_1:=D_1(A,\alpha, \gamma):=\max\left(1,\tau^{-\delta},\frac{\tau^{-\gamma}}{\tau^{(\gamma-\delta)} - 1}\right)$ to rewrite 
\begin{equation}\nonumber
\ph(\rho)\leq D_1\left(\left(\frac{\rho}{r}\right)^\delta\ph(r) + C\rho^{\gamma}\right),\quad 0<\rho\leq r.
 \end{equation}
 As $\rho\leq r$, this implies that $\rho\mapsto \rho^{-\gamma}\ph(\rho)$ is bounded on $(0,r]$ by
 \begin{equation}\nonumber
\rho^{-\gamma}\ph(\rho)\leq D_1\left(r^{-\gamma}\ph(r_0) + C\right),\quad 0<\rho\leq r.
 \end{equation}
 Now for any $\rho\in(r,r_0]$, $\rho^{-\gamma}\ph(\rho)\leq r^{-\gamma}\ph(r_0)\leq D_1\left(r^{-\gamma}\ph(r_0) + C\right)$.
 It remains to bound $r^{-\gamma}$. Remember that if $B>0$, $r:=\min\left(\left(\frac{\tau^\delta}{2AB}\right)^{1/\beta},r_0\right)$ and so
 \begin{equation}\nonumber
 r^{-\gamma}\leq r_0^{-\gamma}+\left(\frac{2AB}{\tau^\delta}\right)^{\gamma/\beta}.
 \end{equation}
 This is still true if $B=0$. Call finally $D_2:=D_2(A,\alpha,\beta,\gamma):=\left(\frac{2A}{\tau^\delta}\right)^{\gamma/\beta}$ we write the final inequality 
  \begin{equation}\nonumber
\rho^{-\gamma}\ph(\rho)\leq D_1\left(r_0^{-\gamma}\ph(r_0) +D_2B^{\gamma/\beta}\ph(r_0)+ C\right),\quad 0<\rho\leq r_0.
 \end{equation}
\end{proof}

\section{Proof of Proposition \ref{prop:1}}\label{app:1}

\begin{proof} Direct: If $a$ satisfies $(H_2)$ we fix $r>0$ and for almost every $x\in \Omega$ we have
\begin{equation}\nonumber
\begin{aligned}
|a(x)-a(y)| &\leq g(x)r^\alpha,\quad \fae\ y\in B_\Omega(x,r)
\end{aligned}
\end{equation}
and then 
\begin{equation}\nonumber
\begin{aligned}
\norm{a-a(x)}{L^\infty(B_\Omega(x,r))} &\leq g(x)r^\alpha,\quad \fae\ x\in \Omega.
\end{aligned}
\end{equation}
Fix $x\in\Omega$ such that the above control is satisfied and write
\begin{equation}\nonumber
\begin{aligned}
\norm{a-a_r(x)}{L^\infty(B_\Omega(x,r))} &\leq \norm{a-a(x)}{L^\infty(B_\Omega(x,r))}+  |a_r(x)-a(x)| \\
&\leq \norm{a-a(x)}{L^\infty(B_\Omega(x,r))} + \fint_{B_\Omega(x,r)}|a(y)-a(x)|\td y\\
&\leq 2\norm{a-a(x)}{L^\infty(B_\Omega(x,r))}\\
&\leq 2g(x)r^\alpha. 
\end{aligned}
\end{equation}
From that, we deduce that $\omega_\alpha[a]\leq 2g$ almost everywhere in $\Omega$ and in particular $\omega_\alpha[a]\in L^q(\Omega)$.

Reverse: Assume that $\omega_\alpha[a](x)\in L^q(\Omega)$. By the Lebesgue differentiation theorem, we know that for almost every $x\in\Omega$, $a_r(x)\to a(x)$ when $r\to 0$. For those $x$, and for any $\rho\in(0,r)$ we write
\begin{equation}\nonumber\begin{aligned}
|a_\rho(x)-a_r(x)| &\leq |a(z)-a_\rho(x)|+|a(z)-a_r(x)|,\quad \forall z\in B_\Omega(x,\rho),\\
&\leq \norm{a-a_\rho(x)}{L^\infty(B_\Omega(x,\rho))} +  \norm{a-a_r(x)}{L^\infty(B_\Omega(x,\rho))},\\
&\leq \norm{a-a_\rho(x)}{L^\infty(B_\Omega(x,\rho))} +  \norm{a-a_r(x)}{L^\infty(B_\Omega(x,r))},\\
&\leq (\rho^\alpha+r^\alpha)\omega_\alpha[a](x).
\end{aligned}\end{equation}
Now take the limit $\rho\to 0$ to get  
\begin{equation}\nonumber
|a(x)-a_r(x)| \leq r^\alpha \omega_\alpha[a](x),\quad \forall r>0.
\end{equation}
Now, fix $r>0$, for almost every $y\in B_\Omega(x,r)$ such that $|x-y|\geq r/2$, we have 
\begin{equation}\nonumber
|a(y)-a_r(x)| \leq r^\alpha \omega_\alpha[a](x)\leq 2^\alpha \omega_\alpha[a](x)|x-y|^\alpha, 
\end{equation}
then for all $r>0$, almost every $x\in\Omega$ and  almost every $y\in B_\Omega(x,r)\bs B_\Omega(x,r/2)$, we have
\begin{equation}\nonumber
\begin{aligned}
|a(x)-a(y)| &\leq |a(x)-a_r(x)| + |a(y)-a_r(x)|\\
&\leq (2^{\alpha}+2) \omega_\alpha[a](x)|x-y|^\alpha.
\end{aligned}
\end{equation}
Denote $E_r:=\set{(x,y)\in\Omega}{r/2\leq |x-y| <r}$. The above inequality is true for every $r>0$ and for almost every $(x,y)\in E_r$. Hence, it is true for almost every $(x,y)$ in the countable union
\begin{equation}\nonumber
\bigcup_{r\in (0,+\infty)\bigcap \Q}E_r = \Omega^2\bs\{x=y\}. 
\end{equation}
Finally, 
\begin{equation}\nonumber
|a(x)-a(y)| \leq g(x)|x-y|^\alpha,\quad \fae\ (x,y)\in\Omega^2.
\end{equation}
with $g:=(2^{\alpha}+2) \omega_\alpha[a]$. 
\end{proof}

\end{appendix}

\bibliographystyle{plain}
\bibliography{biblio}

\begin{thebibliography}{10}

\bibitem{baison2017beltrami}
Antonio Bais{\'o}n, Albert Clop, and Joan Orobitg.
\newblock Beltrami equations with coefficient in the fractional sobolev space
  $w^{\theta,\frac 2\theta }$.
\newblock {\em Proceedings of the American Mathematical Society},
  145(1):139--149, 2017.

\bibitem{baison2017fractional}
Antonio~L Bais{\'o}n, Albert Clop, Raffaella Giova, Joan Orobitg, and Antonia
  Passarelli~di Napoli.
\newblock Fractional differentiability for solutions of nonlinear elliptic
  equations.
\newblock {\em Potential Analysis}, 46(3):403--430, 2017.

\bibitem{clop2009beltrami}
Albert Clop, Daniel Faraco, Joan Mateu, Joan Orobitg, and Xiao Zhong.
\newblock Beltrami equations with coefficient in the sobolev space w\^{}1,p.
\newblock 2009.

\bibitem{cruz2013beltrami}
Victor Cruz, Joan Mateu, and Joan Orobitg.
\newblock Beltrami equation with coefficient in sobolev and besov spaces.
\newblock {\em Canadian Journal of Mathematics}, 65(6):1217--1235, 2013.

\bibitem{de1957sulla}
Memoria di~Ennio De~Giorgi.
\newblock Sulla differenziabilitae l’analiticita delle estremali degli
  integrali multipli regolari.
\newblock {\em Ennio De Giorgi}, 167, 1957.

\bibitem{di2014higher}
A~Passarelli Di~Napoli.
\newblock Higher differentiability of minimizers of variational integrals with
  sobolev coefficients.
\newblock {\em Adv. Calc. Var}, 7(1):59--89, 2014.

\bibitem{di2014higher2}
A~Passarelli Di~Napoli.
\newblock Higher differentiability of solutions of elliptic systems with
  sobolev coefficients: the case p= n= 2.
\newblock {\em Potential Anal}, 41(3):715--735, 2014.

\bibitem{fefferman1972h}
Charles Fefferman and Elias~M Stein.
\newblock H\^{}p spaces of several variables.
\newblock 1972.

\bibitem{giaquinta2013introduction}
Mariano Giaquinta and Luca Martinazzi.
\newblock {\em An introduction to the regularity theory for elliptic systems,
  harmonic maps and minimal graphs}.
\newblock Springer Science \& Business Media, 2013.

\bibitem{gilbarg1998elliptic}
David Gilbarg, Neil~S Trudinger, David Gilbarg, and NS~Trudinger.
\newblock {\em Elliptic partial differential equations of second order},
  volume~2.
\newblock Springer, 1998.

\bibitem{kristensen2010boundary}
Jan Kristensen and Giuseppe Mingione.
\newblock Boundary regularity in variational problems.
\newblock {\em Archive for rational mechanics and analysis}, 198(2):369--455,
  2010.

\bibitem{kuusi2012universal}
Tuomo Kuusi and Giuseppe Mingione.
\newblock Universal potential estimates.
\newblock {\em Journal of Functional Analysis}, 262(10):4205--4269, 2012.

\bibitem{lerner2010some}
Andrei~K Lerner.
\newblock Some remarks on the fefferman-stein inequality.
\newblock {\em Journal d'Analyse Math{\'e}matique}, 112(1):329--349, 2010.

\bibitem{meyers1963p}
Norman~G Meyers.
\newblock An $l^p$--estimate for the gradient of solutions of second order
  elliptic divergence equations.
\newblock {\em Annali della Scuola Normale Superiore di Pisa-Scienze Fisiche e
  Matematiche}, 17(3):189--206, 1963.

\bibitem{mingione2007calderon}
Giuseppe Mingione.
\newblock The calder{\'o}n-zygmund theory for elliptic problems with measure
  data.
\newblock {\em Annali della Scuola Normale Superiore di Pisa-Classe di
  Scienze}, 6(2):195--261, 2007.

\bibitem{moser1960new}
J{\"u}rgen Moser.
\newblock A new proof of de giorgi's theorem concerning the regularity problem
  for elliptic differential equations.
\newblock {\em Communications on Pure and Applied Mathematics}, 13(3):457--468,
  1960.

\bibitem{nash1958continuity}
John Nash.
\newblock Continuity of solutions of parabolic and elliptic equations.
\newblock {\em American Journal of Mathematics}, 80(4):931--954, 1958.

\bibitem{rafeiro2012morrey}
Humberto Rafeiro, Natasha Samko, and Stefan Samko.
\newblock Morrey-campanato spaces: an overview.
\newblock {\em Operator Theory, Pseudo-Differential Equations, and Mathematical
  Physics: The Vladimir Rabinovich Anniversary Volume}, pages 293--323, 2012.

\bibitem{stein1970singular}
Elias~M Stein.
\newblock {\em Singular integrals and differentiability properties of
  functions}.
\newblock Number~30. Princeton university press, 1970.

\bibitem{troianiello2013elliptic}
Giovanni~Maria Troianiello.
\newblock {\em Elliptic differential equations and obstacle problems}.
\newblock Springer Science \& Business Media, 2013.

\end{thebibliography}
\end{document}